\documentclass[12pt]{amsart}

\usepackage{amsmath}
\usepackage{amsthm}
\usepackage{amssymb}
\usepackage{mathrsfs}
\usepackage{ifthen}
\usepackage{graphicx}
\usepackage{enumerate}
\usepackage{hyperref}
\usepackage{float}
\usepackage{tikz}
\usepackage{enumitem}
\usepackage{fullpage}
\usepackage{enumerate}
\usepackage{enumitem}
\usepackage{xcolor}
\usepackage{todonotes}


 \nonstopmode \numberwithin{equation}{section}

\theoremstyle{plain}

\newtheorem{Thm}{Theorem}[section]

\newtheorem{lemma}[Thm]{Lemma}
\newtheorem{prop}[Thm]{Proposition}

\newtheorem{rem}[Thm]{Remark}
\newtheorem{appendixlemma}{Lemma}[section]

\theoremstyle{definition}

\newtheorem{defn}[Thm]{Definition}

\newtheorem{matrixcondition}[Thm]{Condition}


\newcommand{\conjugate}{{\operatorname{^*}}}
\newcommand{\Tr}{\operatorname{Tr}}
\newcommand{\Var}{\operatorname{Var}}
\newcommand{\Cov}{\operatorname{Cov}}
\newcommand{\norm}[1]{\left\lVert#1\right\rVert}

\allowdisplaybreaks

\title{Spectrum of random centrosymmetric matrices; CLT and Circular law}

\author{Indrajit Jana}
\address{Indian Institute of Technology, Bhubaneswar}
\email{ijana@iitbbs.ac.in}
\thanks{Indrajit Jana - \textit{Email: ijana@iitbbs.ac.in;}
}
\author{Sunita Rani}
\address{Indian Institute of Technology, Bhubaneswar}
\email{s21ma09007@iitbbs.ac.in}
\thanks{Sunita Rani - \textit{Email: s21ma09007@iitbbs.ac.in;} (Corresponding author).
}

\begin{document}
\date{}

\begin{abstract}
We analyze the asymptotic fluctuations of linear eigenvalue statistics of random centrosymmetric matrices with i.i.d. entries. We prove that for a complex analytic test function, the centered and normalized linear eigenvalue statistics of random centrosymmetric matrices converge to a normal distribution. We find the exact expression of the variance of the limiting normal distribution via combinatorial arguments. Moreover, we also argue that the limiting spectral distribution of properly scaled centrosymmetric matrices follows the circular law.

keywords: Centrosymmetric matrix, Linear eigenvalue statistics, Central limit theorem, Circular law.
\end{abstract}

\maketitle


\section{Introduction} Let $M$ be an $n \times n$ matrix with real or complex entries. The empirical spectral distribution (ESD) is defined by the measure
\begin{align*}
    \mu_{n}(\cdot) = \frac{1}{n}\sum_{i=1}^{n}\delta_{\lambda_{i}}(\cdot),
\end{align*}
where $\lambda_{1}, \lambda_{2}, \ldots,\lambda_{n}$ are the eigenvalues of the matrix $M$, and $\delta_{x}(\cdot)$ is the point measure at $x\in \mathbb{C}$. The linear eigenvalue statistics (LES) of $M$ corresponding to a test function $f$ is defined by
$$
\operatorname{L}(f)= \sum_{k=1}^n f\left(\lambda_k\right).
$$
The function $f$ is referred to as the test function. The asymptotic behaviour of $\mu_{n}(\cdot)$ and $a_{n}(\operatorname{L}(f)-b_{n})$ for some sequences $ \langle a_{n}\rangle$ and  $\langle b_{n} \rangle $ have been studied in depth for different types of random matrices. We give a brief overview below.

Jonsson \cite{Jonsson1982SomeLT} established the central limit theorem (CLT) for LES in Wishart matrices. Subsequently, researchers extensively studied the fluctuation of eigenvalues of various random matrices. Some of the key contributions include Johansson \cite{johansson1998fluctuations}, Sinai and Soshnikov \cite{sinai1998central}, Bai and Silverstein \cite{bai2008clt}, Lytova and Pastur \cite{lytova2009central},  Shcherbina \cite{shcherbina2011central}. Furthermore, the same has been studied for several structured matrices, such as band and sparse matrices \cite{anderson2006clt, li2013central, shcherbina2015fluctuations, jana2016fluctuations}, Toepliz and band Toeplitz matrices \cite{chatterjee2009fluctuations, liu2012fluctuations}, circulant matrices \cite{adhikari2017fluctuations, adhikari2018universality, maurya2021fluctuations}.

The CLT of LES for non-Hermitian matrices have been studied in different set ups as well; such as when the test function is analytic \cite{rider2006gaussian, o2016central, jana2022clt}, the test function is non-analytic but the moments of the matrix entries match with that of a Ginibre ensemble up to fourth moment \cite{kopel2015linear}. Later, it was generalized for continuous bounded test functions \cite{cipolloni2021fluctuation, cipolloni2022fluctuations, cipolloni2023central}. A more comprehensive list of recent results can be found on the review article by Forrester \cite{forrester2023review}.

In this article, we study the spectral properties of random centrosymmetric matrices. A centrosymmetric matrix is symmetric around its center which is formally defined in the Definition \ref{centrodefn}. Centrosymmetric matrices appeared in many different contexts in mathematics, such as solutions to chessboard separation problems \cite{chatham2012centrosymmetric}, stochastic matrices \cite{cao2022centrosymmetric}, and in various other contexts \cite{cruse1977some, tao2002spectral, rojo2004some, bai2005inverse}. However, to the best of our knowledge the spectral properties of random centrosymmetric matrices have not been studied yet. In this paper, we investigate the limiting ESD of a random centrosymmetric matrix and the fluctuations of the LES.


\section{Notations and layout}

This article is organized as follows. We state the main theorem in Section \ref{sec:main result}. The proof of the CLT is shown in Section \ref{sec: proof of the clt}, and the variance calculation of the limiting normal distribution is delegated to Section \ref{sec:variance calculation}. In addition, it was also argued that the limiting spectral distribution follows the circular law, which was demonstrated in Sectoin \ref{sec:circular law proof}.

Throughout this article, the identity matrix is denoted by $I$, and the counter-identity matrix is denoted by $J$, as defined in Definition \ref{dfn: counter-identity}. The dimensions of such matrices should be understood from the context. The eigenvalues of an $n\times n$ matrix $M$ are denoted by $\lambda_{1}(M),$  $\lambda_{2}(M),$ $\ldots,$ $\lambda_{n}(M).$ The complex conjugate transpose of a matrix $Y$ is denoted by $Y\conjugate$, whereas $\bar{Y}$ stands for only the complex conjugate of $Y$ without taking transpose. The notation $\mathcal{R}_{z}(M)=(zI-M)^{-1}$ represents the resolvent of the square matrix $M$. The notations $\zeta_{n}\stackrel{d}{\to}\zeta$, $\zeta_{n}\stackrel{p}{\to}\zeta$, and $\zeta_{n}\stackrel{a.s.}{\to}\zeta$ are used to indicate that the sequence of random variables $\{\zeta_{n}\}$ converges to $\zeta$ in distribution, in probability, and almost surely, respectively.

For any random variable $\xi$, $\xi^{\circ}$ denotes the centered random variable $\xi-\mathbb{E}[\xi].$ Moreover, if $\xi$ depends on a random matrix, then $\xi_{k}^{\circ}:=\xi-\Xi_{k}[\xi]$, where $\Xi_{k}[\cdot]$ signifies averaging with respect to the $k$-th column of the underlying random matrix.

The vectors $e_{1}, e_{2}, \ldots, e_{n}$ denote the canonical basis vectors of $\mathbb{R}^{n}.$ The disc of radius $r$, denoted by $\mathbb{D}_{r}$, is the set $\{z\in \mathbb{C}:|z|\leq r\}$, and its boundary is denoted by $\partial \mathbb{D}_{r}$. The notations $K$ or $C$ are used to denote a universal constant whose exact value may change from line to line.


\section{Main result}\label{sec:main result}
Let us first define the centrosymmetric matrix.
\begin{defn}[Centrosymmetric  Matrix]\label{centrodefn} A random Centrosymmetric matrix is a random matrix which is symmetric about its center. Formally, let $M = [m_{i,j}]_{n \times n}$ be a random matrix, where $m_{ij}$ are i.i.d. random variables subject to the following constraint
$$
m_{i, j}=m_{n-i+1, n-j+1} \text { for } i, j \in\{1, \ldots, n\} .
$$
Then $M$ is called a random centrosymmetric matrix.
    \end{defn}

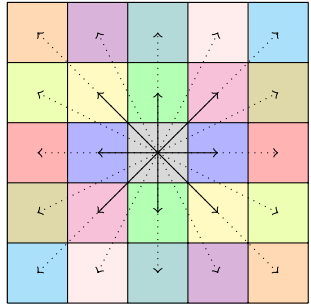
\begin{figure}[h]
\centering
\begin{tikzpicture}[scale=0.8]
    \draw (0,0) rectangle (5,5);
    \foreach \y in {1,2,3,4}{
        \draw (0,\y) -- (5,\y);
    }
    \foreach \x in {1,2,3,4}{
        \draw (\x,0) -- (\x,5);
    }
    \fill[cyan!30] (5,5) rectangle (4,4);
    \fill[pink!30] (4,5) rectangle (3,4);
    \fill[teal!30] (3,5) rectangle (2,4);
    \fill[violet!30] (2,5) rectangle (1,4);
    \fill[orange!30] (1,5) rectangle (0,4);
    
    \fill[cyan!30] (0,0) rectangle (1,1);
    \fill[pink!30] (1,0) rectangle (2,1);
    \fill[teal!30] (2,0) rectangle (3,1);
    \fill[violet!30] (3,0) rectangle (4,1);
    \fill[orange!30] (4,0) rectangle (5,1);
     \fill[olive!30] (5,4) rectangle (4,3);
    \fill[magenta!30] (4,4) rectangle (3,3);
    \fill[green!30] (3,4) rectangle (2,3);
    \fill[yellow!30] (2,4) rectangle (1,3);
    \fill[lime!30] (1,4) rectangle (0,3);
     \fill[olive!30] (0,1) rectangle (1,2);
    \fill[magenta!30] (1,1) rectangle (2,2);
    \fill[green!30] (2,1) rectangle (3,2);
    \fill[yellow!30] (3,1) rectangle (4,2);
    \fill[lime!30] (4,1) rectangle (5,2);
    \fill[red!30] (0,2) rectangle (1,3);
    \fill[blue!30] (1,2) rectangle (2,3);
    \fill[gray!30] (2,2) rectangle (3,3);
    \fill[blue!30] (3,2) rectangle (4,3);
    \fill[red!30] (4,2) rectangle (5,3);
    
    \draw (0,0) grid (5,5);
    
    \draw[<->] (1.5,1.5) -- (3.5,3.5);
    \draw[dotted, <->] (0.5,0.5) -- (4.5,4.5);
    \draw[<->] (3.5,1.5) -- (1.5,3.5);
    \draw[dotted, <->] (4.5,0.5) -- (0.5,4.5);
    \draw[<->] (1.5,2.5) -- (3.5,2.5);
    \draw[dotted, <->] (0.5,2.5) -- (4.5,2.5);
    \draw[<->] (2.5,1.5) -- (2.5,3.5);
    \draw[dotted, <->] (2.5,0.5) -- (2.5,4.5);
    \draw[dotted, <->] (1.5,0.5) -- (3.5,4.5);
    \draw[dotted, <->] (3.5,0.5) -- (1.5,4.5);
    \draw[dotted, <->] (0.5,1.5) -- (4.5,3.5);
    \draw[dotted, <->] (4.5,1.5) -- (0.5,3.5);
\end{tikzpicture}
\caption{Symmetry pattern of a centrosymmetric 5$\times$5 matrix.}
\label{fig:cento_fig}
\end{figure}

  \begin{defn}[Poincaré inequality]{\label{def:Poincaré inequality}} A complex random variable $\zeta$ is said to satisfy the Poincar\'e inequality with a positive constant $\alpha$ if for any differentiable function $g:\mathbb{C}\to\mathbb{C}$, we have $\Var(g(\zeta))\leq \frac{1}{\alpha}\mathbb{E}[|g'(\zeta)|^{2}].$ Here, $\mathbb{C}$ is identified with $\mathbb{R}^{2}$ and $|g'(\zeta)|$ represents the $\ell^{2}$ norm of the two dimensional vector $g'$. Here are some characteristics of the Poincar\'e inequality.
  \begin{enumerate}
      \item If $\zeta$ satisfies the Poincar\'e inequality with constant $\alpha$, then $\kappa \zeta$ also satisfies the Poincar\'e inequality with constant $\alpha/\kappa^{2}$ for any nonzero real constant $\kappa.$
      \item If two independent random variables $\zeta_{1}, \zeta_{2}$ satisfy the Poincar\'e inequality with the same constant $\alpha$, then $\zeta=(\zeta_{1}, \zeta_{2})$ also satisfies the Poincar\'e inequality with the same constant $\alpha$. Here $\zeta$ is treated as a random vector and, accordingly, $g'$ in the definition of Poincar\'e inequality will be replaced by $\nabla g$.
      \item \cite[Lemma 4.4.3]{anderson2010introduction} If $\zeta\in \mathbb{C}^{n}$ satisfies the Poincar\'e inequality with a constant $\alpha$, then for any differentiable function $g:\mathbb{C}^{n}\to\mathbb{C}$,
      \begin{align*}
          \mathbb{P}(|g(\zeta) - \mathbb{E}[g(\zeta)]| > t)\leq C\exp\left\{-\frac{\sqrt{\alpha}t}{\sqrt{2}\|\|\nabla g\|_{2}\|_{\infty}}\right\},
      \end{align*}
      where $C=-2\sum_{k=1}^{\infty}2^{k}\log(1-2^{-2k-1}).$ Here $\mathbb{C}^{n}$ is identified with $\mathbb{R}^{2n}$. Moreover, $\|\nabla g(x)\|_{2}$ is the $\ell^{2}$ norm of the $2n$ dimensional vector $\nabla g(x)$ at $x\in \mathbb{C}^{n}$, and $\|\|\nabla g\|_{2}\|_{\infty}=\sup_{x\in \mathbb{C}^{n}}\|\nabla g(x)\|_{2}.$
    \end{enumerate}
\end{defn}  

For instance, a Gaussian random variable satisfies the Poincar\'e inequality.

\begin{matrixcondition}\label{cond:matrixcond}
  Let $M=[m_{ij}]_{n\times n}$ be a centrosymmetric matrix, where 
  \begin{align*}
      m_{ij} = \frac{1}{\sqrt{n}}x_{ij}.
  \end{align*}
  Assume that $\{x_{ij}: 1\leq i,j\leq n\}$ are i.i.d. complex random variables subject to the constraint $x_{ij} = x_{n+1-i, n+1-j}$. In addition, $x_{ij}$s satisfy the following conditions.
  \begin{enumerate}[label=(\roman*)]
  \item $\mathbb{E}[x_{ij}] = 0, \mathbb{E}[x_{ij}^2] = 0$ and $\mathbb{E}[|x_{ij}|^2] = 1$ for all $1\leq i, j \leq n,$
  \item $x_{ij}$s are continuous random variables with bounded density and satisfy the Poincaré inequality with some universal constant $\alpha$.
\end{enumerate}
\end{matrixcondition}

Before stating our main theorem, let us define the centered LES as 
\begin{align*}
    \operatorname{L}_{f}^{\circ}(M)=\sum_{k=1}^{n}f(\lambda_{k}(M))-\sum_{k=1}^{n}\mathbb{E}[f(\lambda_{k}(M))].
\end{align*}
We are now ready to state our main theorem.
\begin{Thm}\label{thm: main theorem}
Suppose $M$ is a centrosymmetric random matrix satisfying Condition \ref{cond:matrixcond}. Let $f:\mathbb{C}\to\mathbb{C}$ be an analytic function. Then $\operatorname{L}_{f}^{\circ}(M)\stackrel{d}{\to}N(0, \sigma_{f}^{2})$, where
\begin{align*}
    \sigma_{f}^{2}=\frac{2}{\pi}\int_{\mathbb{D}_{1}}f'(w)\overline{f'(w)}\;d\mathfrak{R}(w)d\mathfrak{I}(w).
\end{align*}
In particular, if $f$ is a polynomial given by $P_{d}(z)=\sum_{k=0}^{d}a_{k}z^{k}$, then $\sigma_{p_d}^2=\sum_{k=1}^{d}2ka_{k}^2$.
\end{Thm}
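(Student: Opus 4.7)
\emph{Block reduction.} The defining symmetry $m_{ij}=m_{n+1-i,n+1-j}$ is equivalent to $JM=MJ$ for the counter-identity $J$, so $M$ and $J$ are simultaneously block-diagonalizable. For even $n=2m$, the orthogonal matrix $Q = \tfrac{1}{\sqrt{2}}\bigl(\begin{smallmatrix} I_{m} & I_{m} \\ J_{m} & -J_{m} \end{smallmatrix}\bigr)$ gives $Q^{*}MQ=\operatorname{diag}(A,B)$, where, for $1\leq i,j\leq m$,
\begin{align*}
A_{ij} \;=\; \tfrac{1}{\sqrt{n}}(x_{ij}+x_{i,n+1-j}), \qquad B_{ij} \;=\; \tfrac{1}{\sqrt{n}}(x_{ij}-x_{i,n+1-j}).
\end{align*}
Because $j$ and $n+1-j$ differ when $j\leq m$, the pairs $(x_{ij},x_{i,n+1-j})$ are disjoint across entries, so each of $A$ and $B$ is an $m\times m$ matrix of i.i.d.\ entries in the circular-law normalization: $\mathbb{E}[A_{ij}]=0$, $\mathbb{E}[A_{ij}^{2}]=0$, $\mathbb{E}[|A_{ij}|^{2}]=1/m$, and $A_{ij}$ again satisfies a Poincar\'e inequality (with rescaled constant). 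Consequently $\operatorname{L}_{f}^{\circ}(M) = \operatorname{L}_{f}^{\circ}(A) + \operatorname{L}_{f}^{\circ}(B)$. (Odd $n$ is handled analogously with blocks of sizes $m+1$ and $m$.)

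\emph{CLT via the Cauchy integral and a pair-martingale.} Each block is in the setting of the existing CLTs for linear eigenvalue statistics of i.i.d.\ non-Hermitian matrices (Rider--Silverstein, Kopel, O'Rourke); marginally $\operatorname{L}_{f}^{\circ}(A)\stackrel{d}{\to} N(0,\tau_{f}^{2})$ with $\tau_{f}^{2}=\sum_{k\geq 1}k|a_{k}|^{2}$ for polynomials. To obtain the theorem I would work directly with $M$: fixing a contour $\gamma=\partial\mathbb{D}_{1+\varepsilon}$ that encloses the spectrum with probability tending to one (a consequence of standard operator-norm bounds under the moment assumptions),
\begin{align*}
\operatorname{L}_{f}^{\circ}(M) \;=\; \frac{1}{2\pi i}\oint_{\gamma} f(z)\,\bigl[\Tr\mathcal{R}_{z}(M)\bigr]^{\circ}\,dz,
\end{align*}
and telescope the centered resolvent trace as a martingale sum $\sum_{k\leq m}(\Xi_{k-1}-\Xi_{k})[\Tr\mathcal{R}_{z}(M)]$, where $\Xi_{k}$ integrates out the pair of columns $\{k,n+1-k\}$ (the natural unit imposed by the centrosymmetric constraint). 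Each increment is analyzed via a rank-two Sherman--Morrison expansion of $\mathcal{R}_{z}$, and the Poincar\'e inequality of Definition~\ref{def:Poincaré inequality} supplies the moment and concentration estimates required for the martingale central limit theorem. Interchanging contour integration with the limit then transfers Gaussianity from the resolvent process to $\operatorname{L}_{f}^{\circ}(M)$.

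\emph{Variance and the main obstacle.} Via the block decomposition, $\sigma_{f}^{2} = 2\tau_{f}^{2} + 2\real\lim\Cov\!\bigl(\operatorname{L}_{f}^{\circ}(A),\,\overline{\operatorname{L}_{f}^{\circ}(B)}\bigr)$, so the asserted formula is equivalent to the claim that the cross-covariance between the two blocks vanishes in the limit. This is plausible since $A$ and $B$ are pairwise uncorrelated at every matched site (a consequence of $\mathbb{E}[x_{ij}^{2}]=0$), but it is the main technical hurdle, because the two blocks share a common pool of variables and are not independent. I anticipate that the combinatorial computation of Section~\ref{sec:variance calculation} expands $\Cov(\Tr M^{j},\overline{\Tr M^{k}})$ as a sum over pairs of closed walks on $\{1,\dots,n\}$ and shows, using $\mathbb{E}[x^{2}]=0$, that the ``mirror pairings'' introduced by the centrosymmetric constraint cancel, leaving exactly twice the standard circular-law count. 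This yields $\lim\Cov(\Tr M^{j},\overline{\Tr M^{k}})=2k\,\delta_{j,k}$, whence $\sigma_{P_{d}}^{2}=\sum_{k=1}^{d}2k|a_{k}|^{2}$; the analytic form for general $f$ then follows by contour integration against the limiting covariance kernel.
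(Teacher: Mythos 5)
Your proposal follows essentially the same route as the paper: block-diagonalize via Weaver's orthogonal similarity, represent the centered linear statistic as a contour integral of the centered resolvent trace, set up a martingale difference sequence by integrating out columns (your ``pair of columns $\{k,n+1-k\}$ of $M$'' is exactly the paper's joint averaging over the $k$-th columns of $T_1$ and $T_2$, and your rank-two Sherman--Morrison on $M$ is the same as the paper's two rank-one updates on the blocks under the joint filtration), apply the martingale CLT with Poincar\'e concentration, and finish with a combinatorial trace-moment calculation. You also correctly identify the genuine technical point — the blocks share variables and are only uncorrelated, not independent, so the vanishing of the cross-covariance $\Cov(\operatorname{L}_f^\circ(T_1),\operatorname{L}_f^\circ(T_2))$ is what the combinatorics on $\mathbb{E}[\Tr M^k\Tr\bar M^l]$ must deliver — which is precisely how Section~\ref{sec:variance calculation} closes the argument, working on $M$ itself so that the cross terms never need to be isolated.
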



\section{Truncation and reduction}\label{sec:truncation}

Before proceeding to the technical proof of the main theorem, we need to recall one important theorem regarding centrosymmetric matrices.

\begin{defn}[Counter-identity Matrix]\label{dfn: counter-identity} It is defined as the square matrix whose counter-diagonal elements are $1$ and all the other elements are zero.
\end{defn}

Throughout this article, we shall denote the counter-identity matrix by $J$. The size of $J$ should be understood from the context. Notice that 
\begin{align*}
J=J^{T}\\
J^{2}= I,
\end{align*}
where $I$ the identity matrix of the same dimension. Now, we invoke the following theorem which reduces the centrosymmetric matrix to block diagonal matrix.

\begin{Thm}\cite[Theorem 9]{weaver1985centrosymmetric}\label{orthogonal}
 \begin{enumerate}[label=(\alph*)]
     \item  If $M=\left(\begin{array}{ll}A & B \\ C & D\end{array}\right)$ is an $n \times n$ centrosymmetric matrix with $n=2s$ and $A, B, C$, and $D \text{ are } s\times$ s matrices, then $M$ is orthogonally similar to
     \begin{align*}
         Q^T M Q=\left(\begin{array}{cc}
    A+J C & 0 \\
    0 & A-J C
\end{array}\right),
     \end{align*}
     where 
     \begin{align*}
         Q  =  \sqrt{\frac{1}{2}}\left(\begin{array}{ll}I & -J \\ J & I\end{array}\right).
     \end{align*}

     \item If $M=\left(\begin{array}{lll}A & x & B \\ y & q & yJ \\ C & Jx & D\end{array}\right)$ is an $n \times n$ centrosymmetric matrix with $n =2s+1$, and $A, B, C, D$  are  $s \times s$ matrices, $x$ is $s \times 1$ matrix, $y$ is $1 \times s$ matrix, and $q$ is a scalar, then $M$ is orthogonally similar to
     \begin{align*}
         Q^{T} M Q=\left(\begin{array}{ccc}
			A+J C & \sqrt{2} x & 0 \\
			\sqrt{2} y & q & 0 \\
			0 & 0 & A-J C
		\end{array}\right),
     \end{align*}
		where
  \begin{align*}
      Q  =  \sqrt{\frac{1}{2}}\left(\begin{array}{lll} I & 0 & -J \\ 0 & \sqrt{2} & 0\\ J & 0 & I\end{array}\right).
  \end{align*}
 \end{enumerate}
  In the definitions of $Q$ above, $I$ is an $s\times s$ identity matrix and $J$ is an $s\times s$ counter-identity matrix. Moreover, notice that in both the cases $Q^{T}Q=I=QQ^{T}.$
\end{Thm}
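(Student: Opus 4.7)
The proof is a direct block-matrix verification, so I would organize it in three stages: translate centrosymmetry into block identities, verify $Q$ is orthogonal, then compute $Q^T M Q$ and simplify.

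First, I would encode the scalar centrosymmetry constraint $m_{i,j} = m_{n-i+1,n-j+1}$ as block identities. In the even case $n=2s$, partitioning into $s \times s$ blocks yields $D = JAJ$ and $B = JCJ$ (equivalently $A = JDJ$, $C = JBJ$), where $J$ is the $s \times s$ counter-identity. In the odd case $n=2s+1$, the same identities hold for the four $s \times s$ corner blocks, and centrosymmetry of the middle row and column forces the parameterization $(y,\, q,\, yJ)$ and $(x,\, q,\, Jx)^T$ already displayed in the statement. All of these are immediate from chasing the index map $(i,j) \mapsto (n-i+1,\, n-j+1)$.

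Next, I would verify orthogonality of $Q$. In the even case,
\begin{align*}
Q^T Q = \frac{1}{2}\begin{pmatrix} I & J \\ -J & I \end{pmatrix}\begin{pmatrix} I & -J \\ J & I \end{pmatrix} = \frac{1}{2}\begin{pmatrix} I + J^2 & -J + J \\ -J + J & J^2 + I \end{pmatrix} = I,
\end{align*}
using $J^T = J$ and $J^2 = I$; the odd case is analogous with an extra scalar $1$ contributed by the middle row.

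The core of the argument is the block expansion of $Q^T M Q$. In the even case, I would first compute
\begin{align*}
MQ = \tfrac{1}{\sqrt 2}\begin{pmatrix} A + BJ & B - AJ \\ C + DJ & D - CJ \end{pmatrix},
\end{align*}
and then apply $Q^T$ on the left. Using $D = JAJ$ (hence $JDJ = A$, $JD = AJ$, $DJ = JA$) and $B = JCJ$ (hence $JBJ = C$, $BJ = JC$, $CJ = JB$), the top-left block collapses to $A + JC$, the top-right and bottom-left blocks cancel to $0$, and the bottom-right block becomes $D - JB$. Finally, since $J(A - JC)J = JAJ - CJ = D - JB$, the bottom-right block is orthogonally similar (via $J$) to $A - JC$, yielding the stated form.

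For the odd case the same recipe applies, with extra care for the middle row and column. The factor $\sqrt 2$ in the middle slot of $Q$ is calibrated to produce the $\sqrt 2\, x$ and $\sqrt 2\, y$ entries in the reduced matrix while keeping $Q$ orthogonal. The couplings between the middle strip and the bottom-right $s \times s$ block vanish via the identity $-J(\sqrt 2 x) + \sqrt 2 (Jx) = 0$ and its transpose for $y$, cleanly detaching the bottom-right $s \times s$ block $D - JB$ from the leading $(s+1)\times(s+1)$ block. The main obstacle throughout is purely the bookkeeping: one must track how each $J$ produced by $MQ$ is annihilated or converted by a matching $J$ from $Q^T$ using $J^2 = I$ together with the centrosymmetry block relations. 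Once this bookkeeping is set up carefully, the block-diagonal conclusion falls out immediately.
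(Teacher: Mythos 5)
Your overall strategy is sound, and it is worth noting that the paper itself offers no proof of this statement (it is quoted from Weaver), so a direct block verification like yours is exactly the right way to make it self-contained. Your ingredients all check out: centrosymmetry is equivalent to $D=JAJ$ and $B=JCJ$ (with the displayed parameterization of the middle row and column in the odd case), the orthogonality computation for $Q$ is correct, $MQ$ is computed correctly, the top-left block collapses to $A+JC$, and the off-diagonal blocks vanish; the same bookkeeping handles the middle strip in the odd case, including the $\sqrt{2}\,x$ and $\sqrt{2}\,y$ entries.

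The one point you should not gloss over is the lower-right block. As your own computation shows, with the $Q$ printed in the statement that block equals $D-JB=JAJ-CJ=J(A-JC)J$, and for general $A,C$ this is \emph{not} equal to $A-JC$ (take $C=0$ and any $A$ with $JAJ\neq A$), so the displayed identity $Q^{T}MQ=\operatorname{diag}(A+JC,\,A-JC)$ does not hold literally for that $Q$; saying the $J$-similarity ``yields the stated form'' overstates what you proved. The clean fix is to replace $Q$ by $Q\operatorname{diag}(I,J)$ in the even case (equivalently $Q=\tfrac{1}{\sqrt{2}}\left(\begin{smallmatrix} I & -I\\ J & J\end{smallmatrix}\right)$) and by $Q\operatorname{diag}(I,1,J)$ in the odd case; these are still orthogonal, conjugate only the offending block by $J$, and produce exactly the stated block-diagonal matrix. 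Alternatively, state the conclusion as ``$M$ is orthogonally similar to $\operatorname{diag}(A+JC,\,A-JC)$,'' which is all the paper ever uses, since only the eigenvalues of $Q^{T}MQ$ enter the subsequent arguments. With that adjustment made explicit, your proof is complete.
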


Since the $Q^{T}MQ$ is orthogonally similar to the matrix $M$, the eigenvalues of $Q^{T}MQ$ are the same as those of $M$. The remaining part of this article analyses the eigenvalues of $Q^{T}MQ$. For convenience, we adopt the following simplified notation for $Q^{T}MQ$.
\begin{align*}
    Q^{T}MQ = \left[\begin{array}{cc}
        T_{1} & 0 \\
        0 & T_{2}
    \end{array}\right],
\end{align*}
where $T_{2}=A-JC$ and 
\begin{align*}
    T_{1}=\left\{
    \begin{array}{ll}
        A+JC & \text{if $n=2s$} \\
        \left(\begin{array}{cc}
            A+JC & \sqrt{2}x \\
            \sqrt{2}y & q
        \end{array}\right) & \text{if $n=2s+1$}.
    \end{array}\right.
\end{align*}
Though the above theorem reduces the centrosymmetric matrix to a block diagonal matrix, the block matrices are not independent. Therefore, we need to take care of the dependence structure in the course of the proof.


\section{Circular Law for Random Centrosymmetric Matrix}\label{sec:circular law proof}
Let us first state the general circular law.

\setcounter{Thm}{0}
\renewcommand\theThm{\arabic{section}.\arabic{Thm}}

\begin{Thm}[Circular Law, \cite{tao2008random}]\label{GenCL}
   Consider a random matrix $M = \frac{1}{\sqrt{n}}[x_{ij}]_{i,j=1}^{n},$ where $x_{ij}$s are i.i.d. random variables with mean zero and variance $1$. Then, as $n\to\infty$, the ESD of $M$ converges almost surely to the uniform distribution on the unit disc of $\mathbb{C}$ with density $\frac{1}{\pi} 1_{\{ z\in \mathbb{C} \;|\; |z|\leq1\}}$.
\end{Thm}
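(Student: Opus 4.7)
The plan is to follow the Hermitization strategy of Girko, made rigorous by Bai and sharpened by Tao and Vu, which reduces the study of the non-Hermitian spectrum of $M$ to two sub-problems: (i) the limiting behaviour of the singular values of the shifted matrices $M - zI$, and (ii) a quantitative lower bound on the smallest singular value of $M-zI$.

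First I would pass to the logarithmic potential. For the ESD $\mu_{n}$ of $M$ one has the identity
\begin{align*}
U_{\mu_{n}}(z) := -\int_{\mathbb{C}} \log|z-w|\,d\mu_{n}(w) = -\frac{1}{n}\log\bigl|\det(zI-M)\bigr| = -\frac{1}{2}\int_{0}^{\infty}\log x\,d\nu_{n,z}(x),
\end{align*}
where $\nu_{n,z}$ denotes the ESD of the Hermitian matrix $(zI-M)^{*}(zI-M)$. Since the logarithmic potential determines a probability measure on $\mathbb{C}$ uniquely as a Schwartz distribution, it suffices to prove that $U_{\mu_{n}}(z) \to U_{\mu_{\infty}}(z)$ almost surely for Lebesgue-a.e.\ $z$, where $\mu_{\infty}$ is the uniform measure on $\mathbb{D}_{1}$.

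The first ingredient is standard Hermitian random matrix theory: for each fixed $z$, the empirical measure $\nu_{n,z}$ converges a.s.\ to a deterministic probability measure $\nu_{z}$ whose Stieltjes transform satisfies a Marchenko--Pastur-type fixed-point equation, as in Dozier--Silverstein. This step uses only moment or Stieltjes-transform concentration and does not rely on non-Hermitian structure. A direct calculation then identifies $-\frac{1}{2}\int_{0}^{\infty}\log x\,d\nu_{z}(x)$ with $U_{\mu_{\infty}}(z)$, which produces the expected limit of $U_{\mu_{n}}(z)$ \emph{provided} one may integrate $\log$ against the weak limit.

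The main obstacle --- and the step that makes the circular law genuinely difficult --- is precisely that the function $\log x$ is unbounded at both $0$ and $\infty$, so weak convergence of $\nu_{n,z}$ does not automatically pass through. The large-$x$ tail is routine via a crude operator-norm bound on $M$. The singularity at $0$ requires a quantitative estimate on the smallest singular value: one must show that $\sigma_{n}(M-zI) \geq n^{-A}$ with probability at least $1 - n^{-c}$, uniformly for $z$ in compact subsets of $\mathbb{C}$, together with a bound on the number of singular values of $M-zI$ lying in a window $[n^{-A}, \delta]$. These estimates rest on the small-ball / Littlewood--Offord machinery of Rudelson--Vershynin and Tao--Vu. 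Assembling the pieces gives uniform integrability of $\log$ with respect to $\nu_{n,z}$, hence a.s.\ pointwise convergence $U_{\mu_{n}}(z) \to U_{\mu_{\infty}}(z)$ for a.e.\ $z$; by the uniqueness of the logarithmic potential this yields $\mu_{n} \to \mu_{\infty}$ weakly a.s., and a short computation confirms that $\mu_{\infty}$ is the uniform distribution on the unit disc with density $\frac{1}{\pi}\,\mathbf{1}_{\mathbb{D}_{1}}$, as claimed.
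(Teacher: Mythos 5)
The paper does not prove Theorem~\ref{GenCL} at all: it is stated as a known result with a citation to Tao and Vu \cite{tao2008random}, and the paper's own work in Section~\ref{sec:circular law proof} is limited to deducing the centrosymmetric version by applying Theorem~\ref{GenCL} to the blocks $T_1$ and $T_2$ coming from the orthogonal reduction of Theorem~\ref{orthogonal}. Your sketch is therefore not an alternative to anything argued in the paper; it is a correct high-level account of the Girko--Bai--Tao--Vu Hermitization strategy carried out in the cited reference: pass to the logarithmic potential of $\mu_n$, reduce to the empirical singular value distributions $\nu_{n,z}$ of $(zI-M)^{*}(zI-M)$, obtain their a.s.\ weak limits by Stieltjes-transform / Marchenko--Pastur-type methods, and control the non-uniformly-integrable $\log$ singularity near $0$ via a least-singular-value lower bound of small-ball / Littlewood--Offord type, concluding by uniqueness of the logarithmic potential. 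Two refinements you would need in a full write-up: (i) for almost-sure rather than in-probability convergence, the failure probabilities in the least-singular-value and intermediate-singular-value bounds must be summable in $n$, so a bound of the form $1-n^{-c}$ has to come with $c$ large enough, or be combined with a monotonicity/subsequence plus Borel--Cantelli argument; and (ii) the circular law under only a finite second moment, as stated in Theorem~\ref{GenCL}, in fact relies on the replacement-principle/universality machinery of the later Tao--Vu work with Krishnapur's appendix, whereas the 2008 paper cited here proves it under a finite $(2+\eta)$-th moment hypothesis, so strictly speaking the citation should be updated or the moment assumption strengthened.
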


 Now, let us state the circular law for random centrosymmetric matrix.

\begin{Thm}[Circular Law for Random Centrosymmetric Matrix]
  Consider a random centrosymmetric matrix $M = \frac{1}{\sqrt{n}}[x_{ij}]_{i,j=1}^{n},$ where $x_{ij}$s are i.i.d. random variables for $i\leq j$ with mean zero and variance $1.$ Then, as $n\to\infty$, the ESD of $M$ converges almost surely to the uniform distribution on the unit disc of $\mathbb{C}$ with density $\frac{1}{\pi} 1_{\{ z\in \mathbb{C} \;|\; |z|\leq1\}}$.
\end{Thm}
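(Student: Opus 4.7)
The strategy is to reduce the problem to the classical circular law (Theorem 5.1) by means of the orthogonal block decomposition supplied by Theorem 4.1. Because $Q$ is orthogonal, $M$ and $Q^T M Q$ share the same spectrum, so the eigenvalues of $M$ are the disjoint union of those of $T_1$ and $T_2$. Writing $s_1, s_2$ for the sizes of $T_1, T_2$, the ESD of $M$ is therefore the weighted mixture $\frac{s_1}{n}\mu_{s_1}^{T_1} + \frac{s_2}{n}\mu_{s_2}^{T_2}$, and it suffices to prove that each factor converges almost surely to the uniform measure $\mu_{\mathrm{circ}}$ on $\mathbb{D}_1$.

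I would first treat the even case $n = 2s$. Using $J^2 = I$, one rewrites $JC = BJ$, so that
$$(T_1)_{ij} = \tfrac{1}{\sqrt{n}}\bigl(x_{ij} + x_{i,n+1-j}\bigr), \qquad (T_2)_{ij} = \tfrac{1}{\sqrt{n}}\bigl(x_{ij} - x_{i,n+1-j}\bigr)$$
for $1 \leq i, j \leq s$. Both $x_{ij}$ and $x_{i,n+1-j}$ sit in the first $s$ rows of $M$, a region on which the centrosymmetry constraint imposes no identifications, so they are independent; moreover, for distinct $(i,j),(i',j')\in\{1,\dots,s\}^2$ the two-element index sets $\{(i,j),(i,n+1-j)\}$ and $\{(i',j'),(i',n+1-j')\}$ are disjoint (a routine check, splitting on whether $i = i'$). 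Hence the entries of $T_1$ are i.i.d.\ with mean $0$ and variance $\tfrac{2}{n} = \tfrac{1}{s}$, so $T_1 = \tfrac{1}{\sqrt{s}}Z_1$ for some $s\times s$ matrix $Z_1$ with i.i.d.\ mean-$0$, variance-$1$ entries. Theorem 5.1 then yields $\mu_s^{T_1}\to \mu_{\mathrm{circ}}$ almost surely; the same argument handles $T_2$.

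For the odd case $n = 2s+1$, the block $T_2 = A - JC$ remains $s \times s$ with the same i.i.d.\ structure, while $T_1$ has the $(s+1)\times(s+1)$ bordered form from Theorem 4.1(b). Its $s \times s$ core $A + JC$ falls under the previous analysis, and the remaining last row and column (built from $\sqrt{2}x, \sqrt{2}y, q$) form a rank-$2$ modification of $\mathrm{diag}(A+JC, 0)$. A rank-$2$ perturbation does not alter the limiting ESD, so $\mu_{s+1}^{T_1} \to \mu_{\mathrm{circ}}$ as well, and combining with the mixture decomposition from the first paragraph yields $\mu_n \to \mu_{\mathrm{circ}}$ almost surely.

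The main technical delicacy is justifying the rank-$2$ perturbation step in the non-Hermitian setting, where the clean Bai--Silverstein-type rank inequality for Hermitian ESDs is not directly available. I would execute it through the logarithmic-potential / Hermitization framework of Girko and Tao--Vu: a Schur-complement expansion along the bordering row and column reduces the difference of log-potentials to a rank-$1$ modification of $A+JC$ plus an $O(1/s)$ additive error, both of which vanish at almost every $z$, so the two logarithmic potentials share the same limit and Hermitization transfers this to the ESDs themselves.
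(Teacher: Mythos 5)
Your proof follows the same route as the paper's: use Theorem~\ref{orthogonal} to orthogonally conjugate $M$ into the block-diagonal form $\operatorname{diag}(T_1,T_2)$, observe that each block is (up to scaling) an i.i.d.\ matrix, and invoke the general circular law on each block. Where you go further is in actually justifying two things the paper asserts without argument: (i) that the entries of $T_1=A+JC$ (and $T_2=A-JC$) are i.i.d.\ with variance $2/n$, which you reduce to the disjointness of the pair sets $\{(i,j),(i,n+1-j)\}$ inside the unconstrained top half of the matrix; and (ii) the odd case $n=2s+1$, where $T_1$ is a bordered $(s+1)\times(s+1)$ matrix whose entries are not literally i.i.d.\ (the corner $q$ has variance $1/n$, and the bordering entries $\sqrt{2}x,\sqrt{2}y$ are single rescaled variables rather than sums of two), which you handle by viewing the border as a rank-$2$ perturbation of $\operatorname{diag}(A+JC,0)$ and controlling it through the logarithmic-potential/Hermitization machinery. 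The paper's proof glosses over both points, so your version is a more careful rendering of the same argument rather than a different one.
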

 \begin{figure}[H]
     \centering
     \includegraphics[scale = 0.3]{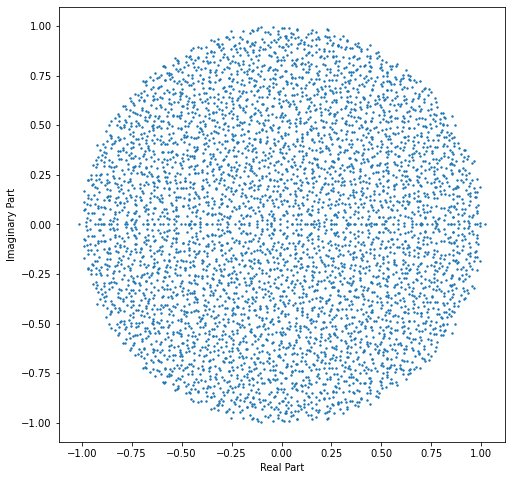}
     \caption{Scatter plot of
    real and imaginary parts of the eigenvalues of a $5000\times 5000$ random centrosymmetric matrix having standard Gaussian entries.}
     \label{fig:circle}
 \end{figure}

 \begin{proof}
     We have a centrosymmetric matrix $M = \frac{1}{\sqrt{n}}[x_{ij}]_{i,j=1}^{n},$ where $x_{ij}$s are i.i.d. centered random variables with variance $1$.
     
From Theorem \ref{orthogonal}, it is evident that the eigenvalues of $M$ are the eigenvalues of $T_{1}$ and $T_{2}.$ The entries of $T_{1}$ and $T_{2}$ are i.i.d. centered random variables having variance $2$. However, sizes of $T_{1}$ and $T_{2}$ is $\lceil n/2 \rceil\times \lceil n/2\rceil$, and they are scaled by $1/\sqrt{n}.$ Therefore, by Theorem \ref{GenCL} the ESD of  $T_{1}$ converges to the circular law. The same applies to $T_{2}$ as well. Hence, ESD of the matrix $M$ converges to the circular law.

 \end{proof}


\section{Proof of the central limit theorem}\label{sec: proof of the clt}

In Section \ref{sec:circular law proof}, we have seen that the limiting ESD follows the circular law. However, for any finite $n$, there might be some eigenvalues outside the unit disc. To avoid such situations, we shall work on the event where the eigenvalues are confined within a disc of finite radius. Keeping this in mind, we define the event
\begin{equation}
\Omega_{n} = \{\|T_1\| \leq \rho_1 \} \cap \{\|T_2\| \leq \rho_2 \} \label{eqn: event that T_1, T_2 are bounded},
\end{equation}
where $\rho_1$ and $\rho_2$ are same as $\rho,$ given in Lemma \ref{lemma:Norm bound}. Additionally, Lemma \ref{lemma:Norm bound} also implies that
\begin{equation}\label{eqn: probability bound on omega complement}
P\left(\Omega_n^{c}\right) \leq K \exp\left(-\sqrt{\frac{\alpha n}{2}} \frac{3 \rho_{m}}{4}\right), \quad \text{where } \rho_{m} = \min\{\rho_1, \rho_2\}.
\end{equation}
Let us denote $\rho=\max\{\rho_{1}, \rho_{2}\}.$ 
If $ f $ is a complex analytic function on $ \partial \mathbb{D}_{\rho+\tau} $, then we can express $\operatorname{L}_n(f) $ on the event $ \Omega_n $ as follows 

\begin{align}
\operatorname{L}_{f}(M)\notag
&= \sum_{i=1}^{n} f(\lambda_{i}(M)) \notag \\
&= \frac{1}{2\pi i} \oint_{\partial \mathbb{D}_{\rho+\tau}} \sum_{i=1}^{n} \frac{f(z)}{z-\lambda_{i}(M)}\, dz \notag \\
&= \frac{1}{2\pi i} \oint_{\partial \mathbb{D}_{\rho+\tau}} f(z) \Tr \mathcal{R}_{z}(M) \, dz. \notag
\end{align}
The above represents $\operatorname{L}_{f}(M)$ in terms of the resolvent of the matrix. This brings down our task to the analysis of the resolvent. However, to keep the norm of the resolvent bounded, we need to restrict our attention to the event $\Omega_{n}$, where norms of $T_{1}, T_{2}$ are bounded. From the probability estimate \eqref{eqn: probability bound on omega complement} and the Lemma \ref{lemma:Norm bound}, we may express the centered linear eigenvalue statistic as follows. 
$$
\operatorname{L}_{f}^{\circ}(M) = \frac{1}{2\pi i} \oint_{\partial \mathbb{D}_{\rho+\tau}} f(z) \Tr \hat{\mathcal{R}}_{z}^{\circ}(M) \, dz+o(1).
$$
where $\hat{\xi} = \xi1_{\Omega_{n}},$ and $\xi^{\circ} = \xi-\mathbb{E}[\xi].$ The $o(1)$ at then end of the above expression can further be explained as follows.

We can rewrite
$$\Tr{\mathcal{R}}_{z}^{\circ}(M)=\Tr \hat{\mathcal{R}}_{z}^{\circ}(M)+\Tr \mathcal{R}_{z}(M)1_{\Omega_{n}^c}-\mathbb{E}[\Tr \mathcal{R}_{z}(M)1_{\Omega_{n}^c}].$$

We will demonstrate that $\Tr \mathcal{R}_{z}(M)1_{\Omega_{n}^c}-\mathbb{E}[\Tr \mathcal{R}_{z}(M)1_{\Omega_{n}^c}]=o(1).$ First, let us prove that $\Tr \mathcal{R}_{z}(M)1_{\Omega_{n}^c}=o(1).$ Since we are dealing with continuous random variables, almost surely there is no eigenvalue on the circle $\partial\mathbb{D}_{\rho+\tau}.$ Therefore, $\sup_{z\in \partial\mathbb{D}_{\rho+\tau}}|\Tr \mathcal{R}_{z}(M)|$ is bounded almost surely. Consequently, we conclude

$$\mathbb{P}\left(\sup_{z\in \partial \mathbb{D}_{\rho+\tau}}|\Tr \mathcal{R}_{z}(M)1_{\Omega_{n}^c}|>\epsilon \right)\leq \mathbb{P}(\Omega_n^c)\rightarrow 0 \text{ as } n \rightarrow \infty.$$

Next, we prove that $\mathbb{E}[\Tr \mathcal{R}_{z}(M)1_{\Omega_{n}^c}] \rightarrow 0$ asymptotically.

We note that the assumptions listed in Condition \ref{cond:matrixcond} are consistent with the assumptions of the Lemma \ref{lemma: rider silverstein bound on trace of resolvent}. Therefore, using the Lemma \ref{lemma: rider silverstein bound on trace of resolvent}, and the bound \eqref{eqn: probability bound on omega complement} we may conclude that 
\begin{align*}
    \mathbb{E}[\Tr \mathcal{R}_{z}(M)1_{\Omega_{n}^c}]&\leq \mathbb{E}[|\Tr \mathcal{R}_{z}(M)|]\mathbb{P}(\Omega_{n}^{c})\\
    &\leq C n^{7/2}\exp\left(-\sqrt{\frac{\alpha n}{2}} \frac{3 \rho_{m}}{4}\right)\\
    &\to 0\;\;\;\textbf{as $n\to\infty$.}
\end{align*}



Furthermore, standard use of complex analysis shows that 
\begin{align*}
    &\frac{1}{\pi}\int_{\mathbb{D}_{1}}f'(w)\overline{f'(w)}\;d\mathfrak{R}(w)d\mathfrak{I}(w)\\
    &=\frac{1}{\pi}\int_{\mathbb{D}_{1}}\left[\frac{1}{2\pi i}\oint_{\partial\mathbb{D}_{1}}\frac{f(z)\;dz}{(w-z)^{2}}\frac{1}{2\pi i}\oint_{\partial\mathbb{D}_{1}}\frac{f(\bar\eta)\;d\bar\eta}{(\bar w-\bar\eta)^{2}}\right]\;d\mathfrak{R}(w)d\mathfrak{I}(w)\\
    &=-\frac{1}{4\pi^{2}}\oint_{\partial\mathbb{D}_{1}}\oint_{\partial\mathbb{D}_{1}}f(z)\overline{f(\eta)}\left[\frac{1}{\pi}\int_{\mathbb{D}_{1}}\frac{d\mathfrak{R}(w)d\mathfrak{I}(w)}{(w-z)^{2}(\bar w-\bar\eta)^{2}}\right]\;dz\;d\bar\eta\\
    &=-\frac{1}{4\pi^{2}}\oint_{\partial\mathbb{D}_{1}}\oint_{\partial\mathbb{D}_{1}}\frac{f(z)\overline{f(\eta)}}{(1-z\bar\eta)^{2}}\;dz\;d\bar\eta.
\end{align*}
Hence, the main Theorem \ref{thm: main theorem} is essentially the same as proving Proposition \ref{prop: Gaussian process and tightness}.
\begin{prop}{\label{prop: Gaussian process and tightness}}
The sequence $\{\Tr \hat{\mathcal{R}}_{z}^{\circ}(M)\}_{n}$ is tight in the space $\mathcal{C}(\partial{\mathbb{D}}_{\rho+\tau})$, which is the space of continuous function on $\partial\mathbb{D}_{\rho+\tau},$ and converges in distribution to a Gaussian process with covariance kernel given by $2(1-z\bar \eta)^{-2}$.
\end{prop}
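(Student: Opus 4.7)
The plan is to exploit the block decomposition from Theorem \ref{orthogonal}, writing $\Tr \hat{\mathcal{R}}_z^\circ(M) = \Tr \hat{\mathcal{R}}_z^\circ(T_1) + \Tr \hat{\mathcal{R}}_z^\circ(T_2)$, and then analyzing the two summands jointly via a Rider--Silverstein-style approach adapted to their dependence. In the even case $n=2s$, parametrizing $M$ by its independent upper blocks $A$ and $B$ gives $T_1 = A + BJ$ and $T_2 = A - BJ$. A direct second-moment computation using Condition \ref{cond:matrixcond} shows that each $T_j$ has i.i.d.\ entries with variance $2/n$ and vanishing pseudo-variance, so that each $T_j$ matches the complex iid model after rescaling. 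Moreover,
\[
\mathbb{E}\bigl[(T_1)_{ij}\,\overline{(T_2)_{k\ell}}\bigr] = \mathbb{E}\bigl[(T_1)_{ij}\,(T_2)_{k\ell}\bigr] = 0,
\]
so $T_1$ and $T_2$ are entry-wise uncorrelated (though not independent). This structural input is what ultimately produces the factor $2$ in the target kernel $2(1-z\bar\eta)^{-2}$: one copy of $(1-z\bar\eta)^{-2}$ per block, matching the Rider--Silverstein CLT for a single complex iid matrix.

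For finite-dimensional convergence I would apply the martingale CLT to the filtration $\mathcal{F}_k$ obtained by revealing the first $k$ columns of $A$ and $B$, and decompose
\[
\Tr \hat{\mathcal{R}}_z^\circ(M) = \sum_{k=1}^{s}\gamma_k(z), \qquad \gamma_k(z) = \gamma_k^{(1)}(z) + \gamma_k^{(2)}(z),
\]
with $\gamma_k^{(j)}$ the martingale difference contributed to $\Tr \mathcal{R}_z(T_j)$ at step $k$. A Schur complement expansion applied to $T_1$ and $T_2$ separately gives workable rank-one-perturbation expressions for the $\gamma_k^{(j)}$. The quadratic variation hypothesis of the martingale CLT becomes
\[
\sum_{k=1}^{s}\mathbb{E}\bigl[\gamma_k(z)\,\overline{\gamma_k(\eta)}\bigr] \longrightarrow 2(1-z\bar\eta)^{-2},
\]
which splits into two diagonal contributions $\sum_k \mathbb{E}[\gamma_k^{(j)}(z)\,\overline{\gamma_k^{(j)}(\eta)}] \to (1-z\bar\eta)^{-2}$, $j=1,2$, each handled by the standard single-matrix Rider--Silverstein computation, plus a cross term $\sum_k \mathbb{E}[\gamma_k^{(1)}(z)\,\overline{\gamma_k^{(2)}(\eta)}]$ that must vanish. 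The conditional Lindeberg / fourth-moment hypothesis follows from the Poincar\'e concentration estimate already recorded in the paper.

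Tightness in $\mathcal{C}(\partial\mathbb{D}_{\rho+\tau})$ I would establish via a Kolmogorov--Chentsov bound $\mathbb{E}[|\Tr \hat{\mathcal{R}}_z^\circ(M) - \Tr \hat{\mathcal{R}}_\eta^\circ(M)|^4] \le C|z-\eta|^2$ uniformly on the circle: on $\Omega_n$, $\|\mathcal{R}_z(T_j)\|$ and $\|\mathcal{R}_z(T_j)^2\|$ are controlled by constants depending only on $\tau$, and the Poincar\'e inequality promotes the pointwise $O(|z-\eta|)$ bound on the increment to the required fourth-moment control. The main obstacle, however, is the vanishing of the cross term $\sum_k \mathbb{E}[\gamma_k^{(1)}(z)\,\overline{\gamma_k^{(2)}(\eta)}]$: although entries of $T_1$ and $T_2$ are uncorrelated, the two resolvents depend on the same $\Theta(n^2)$ independent variables, so $\gamma_k^{(1)}$ and $\gamma_k^{(2)}$ are strongly correlated random variables, not independent. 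Proving that their sum of covariances tends to zero requires an explicit Schur-complement expansion together with combinatorial bookkeeping of the Wick-type contractions; the hypothesis $\mathbb{E}[x_{ij}^2]=0$ plays a decisive role here, eliminating precisely the contractions that would couple $T_1$ to $T_2$.
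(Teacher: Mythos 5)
Your proposal follows the paper's overall strategy closely: the same block diagonalization $\Tr\hat{\mathcal{R}}_z^\circ(M)=\Tr\hat{\mathcal{R}}_z^\circ(T_1)+\Tr\hat{\mathcal{R}}_z^\circ(T_2)$, the same martingale-difference decomposition with Sherman--Morrison rank-one expansions, and the same Poincar\'e-based concentration for the Lindeberg condition and tightness. The one genuine departure is how the covariance kernel is extracted. You propose to split the quadratic variation into two single-block Rider--Silverstein kernels $(1-z\bar\eta)^{-2}$ plus a cross term to be annihilated; the paper instead Laurent-expands $\Tr\hat{\mathcal{R}}_z(M)$ in powers of $z^{-1}$ and computes $\mathbb{E}[\Tr M^k\Tr\bar M^l]$ directly on $M$ by the combinatorial index-chain analysis of Lemmas \ref{lemma: one_chain_complex}--\ref{lemma: cross chain merging}, where the factor $2$ appears because the constraint $x_{ij}=x_{n+1-i,n+1-j}$ admits both an aligned chain pairing and its reflection $j_t\mapsto n+1-i_t$. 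Both routes are valid; yours makes the factor $2$ structurally transparent as ``one kernel per block,'' while the paper's sidesteps the cross term entirely. (Your tightness criterion is a Kolmogorov--Chentsov fourth-moment bound rather than the paper's second-moment Lipschitz estimate on $\Phi_n$; either is adequate.)

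Two corrections on the cross term, which you identify as the main obstacle. First, it is much easier to kill than you suggest: because the conditional expectation $\Xi_k$ in each martingale increment integrates out only the $k$-th columns $t_k^1,t_k^2$ (which are independent of the leave-one-out resolvents $\hat{\mathcal{R}}_z(T_i^{(k)})$), the leading cross contribution collapses to a quadratic form in $\Xi_k[t_k^1(t_k^2)^*]$, whose entries are precisely $\mathbb{E}\bigl[(T_1)_{ik}\overline{(T_2)_{jk}}\bigr]=0$ by the very uncorrelatedness you computed; higher-order terms in the $\log(1+\gamma)$ expansion are $O(n^{-2})$ per $k$ by the Poincar\'e moment bounds, so no Wick-type bookkeeping is required. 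Second, the mechanism for that vanishing is the cancellation $\mathbb{E}[|A_{ij}|^2]=\mathbb{E}[|(BJ)_{ij}|^2]$ coming from the equi-distribution of $A$ and $B$, not the hypothesis $\mathbb{E}[x_{ij}^2]=0$; the latter is what makes the pseudo-covariance vanish, i.e., condition \eqref{eq: second equivalent cond of MCLT} of the martingale CLT, which is a separate requirement.
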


 Our first goal is to reduce the above proposition to \eqref{eq: first equivalent cond of MCLT}, \eqref{eq: second equivalent cond of MCLT}, \eqref{eq: third equivalent cond of MCLT}. First of all, by Cramer-Wold device, to show finite dimensional convergence it is sufficient to show that for any
    $\left\{z_l; 1 \leq l \leq q\right\} \subset \partial \mathbb{D}_{\rho + \tau},$ and $\left\{\alpha_l, \beta_l; 1 \leq l \leq q\right\} \subset \mathbb{C}$ for which
\begin{equation}
    \sum_{l=1}^q\left[\alpha_l \Tr  \hat{\mathcal{R}}_{z_l}^{\circ}(M)+\beta_l \overline{\Tr  \hat{\mathcal{R}}_{z_l}^{\circ}(M)}\right] \mathbf{1}_{\Omega_n}
\end{equation}
is real and converges to a Gaussian random variable with mean zero. In order to show this, we use the martingale CLT, which is stated in Lemma \ref{lemma:MCLT}. For notaional convenience, we define the relevant conditional expectations with respect to the columns of the matrices $T_{1}$ and $T_{2}.$  Let $\Xi_{k}[\xi]$ be the averaging of the random variable $\xi$ over the $k$-th columns of matrices $T_{1}$ and $T_{2}$, and $\mathbb{E}_k[\cdot]=\Xi_{k+1} \Xi_{k+2} \ldots \Xi_{\lceil \frac{n}{2} \rceil}[\cdot]=\Xi_{k+1} \mathbb{E}_{k+1}[\cdot]$. For instance, $\mathbb{E}_0[\cdot]=\mathbb{E}[\cdot]$ and $\mathbb{E}_{\lceil \frac{n}{2}\rceil} [\cdot]=[\cdot]$. Therefore, we can express
\begin{align}
    \Tr  \hat{\mathcal{R}}_z^{\circ}(M) \notag
    &= \Tr  \hat{\mathcal{R}}_z^{\circ}(T_1)+\Tr  \hat{\mathcal{R}}_z^{\circ}(T_2)\\
    &=\Tr \hat{\mathcal{R}}_{z}(T_1)-\mathbb{E}[\Tr \hat{\mathcal{R}}_{z}(T_1)]+\Tr \hat{\mathcal{R}}_{z}(T_2)-\mathbb{E}[\Tr \hat{\mathcal{R}}_{z}(T_2)] \notag\\
    &=   \sum_{k=1}^{\lceil n/2\rceil} (\mathbb{E}_{k}-\mathbb{E}_{k-1})\Tr \hat{\mathcal{R}}_{z}(T_1) +  \sum_{k=1}^{\lceil n/2\rceil} (\mathbb{E}_{k}-\mathbb{E}_{k-1})\Tr \hat{\mathcal{R}}_{z}(T_2) \notag\\
    &=\sum_{k=1}^{\lceil n/2\rceil} \mathbb{E}_k\left\{\left(\Tr  \hat{\mathcal{R}}_z(T_1)\right)_k^\circ\right\}+\sum_{k=1}^{\lceil n/2\rceil} \mathbb{E}_k\left\{\left(\Tr  \hat{\mathcal{R}}_z(T_2)\right)_k^\circ\right\}\notag\\
    &=: \sum_{k=1}^{\lceil n/2\rceil} W_{z, k}(T_1)+\sum_{k=1}^{\lceil n/2\rceil} W_{z, k}(T_2) \notag\\
    &= \sum_{k=1}^{\lceil n/2\rceil}\left(W_{z, k}(T_1)+W_{z, k}(T_2)\right)\label{eq: trace as martingale difference},
\end{align}
where $\xi_k^{\circ} = \xi-\Xi_{k}[\xi].$ Since $\mathbb{E}_{k}[\cdot]$ is defined as the conditional expectation given the first $k$ columns of $T_1$ and $T_2$, $(\mathbb{E}_k - \mathbb{E}_{k-1})[\xi]$ forms a martingale difference sequence. Consequently, the sequences $\left\{W_{z, k}(T_1)\right\}_{1 \leq k \leq \lceil n/2 \rceil}$ and $\left\{W_{z, k}(T_2)\right\}_{1 \leq k \leq \lceil n/2 \rceil}$ are martingale difference sequences with respect to the filtration $\mathcal{F}_{n, k} = \sigma\{t^{1}_{j}, t^{2}_{j} : 1 \leq j \leq k\}$, where $t^{1}_{j}$ and $t^{2}_{j}$ are the $j$th columns of matrices $T_{1}$ and $T_{2}$ respectively. Therefore, the sum $W_{z, k}(T_1)+W_{z, k}(T_2)$ also forms a martingale difference sequence with respect to the same filtration $\mathcal{F}_{n, k}.$

Now, we can rewrite
 \begin{align}
       &\sum_{i=1}^q\left[\alpha_i \Tr  \hat{\mathcal{R}}_{z_i}^{\circ}(M)+\beta_i \overline{\Tr  \hat{\mathcal{R}}_{z_i}^{\circ}(M)}\right]\notag\\
       &=\sum_{k=1}^{\lceil n/2\rceil} \left\{\sum_{i=1}^{q}\left(\alpha_{i} (W_{z_{i},k}(T_1)+W_{z_{i},k}(T_2))+ \beta_i (W_{\bar{z_{i}},k}(T_{1}\conjugate)+W_{\bar{z_{i}},k}(T_{2}\conjugate))\right)\right\}\notag\\
       &= \sum_{k=1}^{\lceil n/2\rceil} \left\{\sum_{i=1}^{q}\alpha_{i} W_{z_{i},k}(T_1)+ \beta_i W_{\bar{z_{i}},k}(T_{1}\conjugate) +\sum_{i=1}^{q}\alpha_{i} W_{z_{i},k}(T_2)+ \beta_i W_{\bar{z_{i}},k}(T_{2}\conjugate)\right\} \notag\\
       &=: \sum_{k=1}^{\lceil n/2\rceil} \left(\psi_{n,k}^{(1)}+\psi_{n,k}^{(2)}\right)\notag\\
       &=:\sum_{k=1}^{\lceil n/2\rceil} \psi_{n,k}\notag,
\end{align}
where $X\conjugate$ denotes the complex conjugate of the matrix $X$. 
As a result, $\left\{\psi_{n, k}\right\}_{1 \leq k \leq {\lceil n/2 \rceil}}$ constitutes a martingale difference sequence with respect to the filtration $\mathcal{F}_{n, k}$. We notice that proving the following equation \eqref{eq: first equivalent cond of MCLT} is sufficient for the validity of the condition (i) of Lemma \ref{lemma:MCLT};

\begin{align}
    \sum_{k=1}^{\lceil n/2 \rceil} \mathbb{E}\left[\psi_{n, k}^2 \mathbf{1}_{\left|\psi_{n, k}\right|>\theta}\right] \leq \frac{1}{ \theta^2} \sum_{k=1}^{\lceil n/2 \rceil} \mathbb{E}\left[\psi_{n, k}^4\right]  \rightarrow 0 \label{eq: first equivalent cond of MCLT}.
\end{align}
Since $\mathbb{E}[\cdot|\mathcal{F}_{n,k-1}]= \mathbb{E}_{k-1}[\cdot],$ the condition (ii) of Lemma \ref{lemma:MCLT} can be expressed as

\begin{align}
\sum_{k=1}^{\lceil n/2 \rceil}\mathbb{E}_{k-1}[(W_{z_i,k}(T_1)+W_{z_i,k}(T_2))(W_{z_j,k}(T_1)+W_{z_j,k}(T_2))]\stackrel{p}{\rightarrow} 0, \label{eq: second equivalent cond of MCLT} 
\end{align}
and
\begin{align}
\mathbb{E}_{k-1}[(W_{z_{i},k}(T_1)+W_{z_{i},k}(T_2))(W_{\bar{z_{j}},k}(T_1\conjugate)+W_{\bar{z_{j}},k}(T_2\conjugate))] \stackrel{p}{\rightarrow} \sigma\left(z_i, z_j\right), \label{eq: third equivalent cond of MCLT}
\end{align}
where $\sigma$ is a covariance kernel to be found later. We defer the calculation of the covariance kernel to section \ref{sec: calculation of the covariance kernel}. Hence, establishing Proposition \ref{prop: Gaussian process and tightness} is equivalent to demonstrating the tightness of the sequence $\{\Tr  \hat{\mathcal{R}}_{z}^{\circ}(T_1)+\Tr \hat{\mathcal{R}}_{z}^{\circ}(T_2)\}_n$ and verifying the conditions outlined in \eqref{eq: first equivalent cond of MCLT}, \eqref{eq: second equivalent cond of MCLT}, and \eqref{eq: third equivalent cond of MCLT}. Before proceeding any further, let us do some reductions.

For $i=1,2$, let $T_{i}^{(k)}$ be the matrix obtained by zeroing out the $k$th column of $T_i$, and let $\hat{\mathcal{R}}_{z}(T_i^k) := \mathcal{R}_{z}(T_i^k)\mathbf{1}_{\Omega_{n,k}}$,
where $\Omega_{n,k}=  \left\{\norm{T_{1}^{(k)}}\leq \rho_1 \right\} \cap \left\{\norm{T_{2}^{(k)}}\leq \rho_2 \right\}.$ The space $\Omega_{n,k}$ is different from $\Omega_{n},$ but asymptotically both the spaces are the same. In fact, we can check that
\begin{align*}
    &\mathbb{E}\left|\sum_{i=1}^{\lceil n/2 \rceil}\Xi_{i}\left[\Tr \mathcal{R}_{z}(T_{1}^{(k)})\mathbf{1}_{\Omega_{n}}-\Tr \mathcal{R}_{z}(T_{1}^{(k)})\mathbf{1}_{\Omega_{n, i}}\right]_{i}^{\circ}\right|\\
    &\leq 2\mathbb{E}\left|\sum_{i=1}^{\lceil n/2 \rceil}\Xi_{i}\left[\Tr \mathcal{R}_{z}(T_{1}^{(k)})\mathbf{1}_{\Omega_{n}^{c}\cap \Omega_{n, i}}\right]\right|\\
    &\leq \frac{n^{2}}{\tau}\mathbb{P}(\Omega_{n}^c)=o(1).
\end{align*}
The same is true for $T_{2}^{(k)}$ as well. The important difference between $\Omega_{n}$ and $\Omega_{n, k}$ is that the latter is independent of the $k$-th columns of $T_{1}$ and $T_{2}$, which will allow us to decouple some expectation calculations in future. Consequently, we shall proceed with the space $\Omega_{n,k}$.

Using Lemma \ref{lem: sherman morrison} and the resolvent identity, we have
\begin{align*}
    \Tr \hat{\mathcal{R}}_z(T_1) &=\Tr \hat{\mathcal{R}}_z(T_1^k)+ e_{k}^{t}\hat{\mathcal{R}}_z(T_1^k) \mathcal{R}_{z}(T_1)t_k^1\\
    &=\Tr \hat{\mathcal{R}}_z(T_1^k)+\frac{e_k^t(\hat{\mathcal{R}}_z(T_1^k))^2t_k^1}{1+e_k^t \hat{\mathcal{R}}_z(T_1^k)t_k^1}.
\end{align*}

Similarly, 
$$
\Tr \hat{\mathcal{R}}_z(T_2) = \Tr \hat{\mathcal{R}}_z(T_2^k) + \frac{e_k^t(\hat{\mathcal{R}}_z(T_2^k))^2t_k^2}{1 + e_k^t \hat{\mathcal{R}}_z(T_2^k)t_k^2},
$$
where $t_k^1$ and $t_k^2$ are the $k$th columns of $T_1$ and $T_2$ respectively. Thus,
\begin{align}
    &\Tr \hat{\mathcal{R}}_z(M)\notag\\ &= \Tr \hat{\mathcal{R}}_z(T_1^k)+\Tr \hat{\mathcal{R}}_z(T_2^k)+\frac{e_k^t(\hat{\mathcal{R}}_z(T_1^k))^2t_k^1}{1+e_k^t \hat{\mathcal{R}}_z(T_1^k)t_k^1}+\frac{e_k^t(\hat{\mathcal{R}}_z(T_2^k))^2t_k^2}{1+e_k^t \hat{\mathcal{R}}_z(T_2^k)t_k^2}\notag\\
    &=\Tr \hat{\mathcal{R}}_z(T_1^k)+\Tr \hat{\mathcal{R}}_z(T_2^k) -\frac{\partial}{\partial z} \log(1+\gamma_{1k}(z))-\frac{\partial}{\partial z} \log(1+\gamma_{2k}(z))\notag,
\end{align}
where $\gamma_{1k}(z):=e_k^t\hat{\mathcal{R}}_z(T_1^k)t_k^1$
and $ \gamma_{2k}(z):=e_k^t\hat{\mathcal{R}}_z(T_2^k)t_k^2$. Now, let us find the tail bounds of $\gamma_{1k}(z)$ and $\gamma_{2k}(z).$ Note that $\gamma_{1k}(z)$ is product of two independent random variables. Intuitively, $\gamma_{1k}(z)\stackrel{a.s.}{\to} 0$ by conditioning on $\{t^{1}_{j}, t^{2}_{j} : j\in \{1,2,...,\lceil n/2 \rceil \} \backslash \{k\}\}$.

Recall that $\sum_{j=1}^{\lceil n/2 \rceil} |\hat{\mathcal{R}_{z}}(T_{1}^k)_{ij}|\leq \|\hat{\mathcal{R}_{z}}(T_{1}^k)\|^2 \leq \tau^{-2}$ for $z\in \partial{\mathbb{D}}_{\rho + \tau}.$ Now, treating $\gamma_{1k}(z)$ as the function $g$ in the property 3 in the Definition \ref{def:Poincaré inequality}, we obtain

\begin{equation}
    \mathbb{P}\left(|\gamma_{1k}(z)|>t \;\;|\;\; \hat{\mathcal{R}_{z}}(T_{1}^k)\right) \leq K \operatorname{exp} \left(-\tau \sqrt{\frac{\alpha n}{2}}t\right) \label{eq: probability estimate of gamma_1k}.
\end{equation}
In addition,

\begin{equation}
    \mathbb{E}[|\gamma_{1k}(z)|^p] \leq K p! \left(\frac{2}{\tau^2 \alpha n}\right)^{p/2} \text{ for all } z \in \partial{\mathbb{D}}_{\rho+\tau}.
    \label{eq: moment of gamma_1k}
\end{equation}

Similarly, we have bound for $\gamma_{2k}$ as follows.
\begin{equation}
    \mathbb{E}[|\gamma_{2k}(z)|^p] \leq K p! \left(\frac{2}{\tau^2 \alpha n}\right)^{p/2} \text{ for all } z \in \partial{\mathbb{D}}_{\rho+\tau}
    \label{eq: moment of gamma_2k}.
\end{equation}

Since $T_{1}^{(k)}, T_{2}^{(k)}$ are independent of the $k$th column of $T_{1}, T_{2}$ respectively, we can easily see that 
\begin{align}
   &\left[\Tr \hat{\mathcal{R}_{z}}(T_1^k)+\Tr \hat{\mathcal{R}_{z}}(T_2^k)\right]_k^\circ \nonumber\\
   &=\Tr \hat{\mathcal{R}_{z}}(T_1^k)+\Tr \hat{\mathcal{R}_{z}}(T_2^k)- \Xi_{k}[\Tr \hat{\mathcal{R}_{z}}(T_1^k)+\Tr \hat{\mathcal{R}_{z}}(T_2^k)] =0 \label{eqn: centering of R after removing kth column}.
\end{align}

Hence, 
\begin{align}
    &W_{z, k}(T_1)+W_{z, k}(T_2)\\
    &= \mathbb{E}_k\left[\left(\Tr \hat{\mathcal{R}}_{z}(T_1)+\Tr \hat{\mathcal{R}}_{z}(T_2)\right)_k^\circ\right]\notag\\
    &= - \mathbb{E}_k\left[\left(\frac{\partial}{\partial z} \operatorname{\log}(1+\gamma_{1k}(z)) + \frac{\partial}{\partial z} \operatorname{\log}(1+\gamma_{2k}(z)) \right)_k^\circ \right]\notag\\
    &= - \frac{\partial}{\partial z} \mathbb{E}_k[\operatorname{\log}(1+\gamma_{1k}(z))+\operatorname{\log}(1+\gamma_{2k}(z))]_k^\circ \notag\\
    &= - \frac{\partial}{\partial z}\mathbb{E}_k
       [\operatorname{\log}\{(1+\gamma_{1k}(z))(1+\gamma_{2k}(z))\}]_k^\circ\label{eq: W_z_k as derivative of log}.
\end{align} 

In the above, $\mathbb{E}_{k}$ and $\partial/\partial z$ are interchanged using the dominated convergence theorem. We can estimate $W_{z,k}(T_1)+W_{z,k}(T_2)$ by the following fact from complex analysis.
Let $v$ be an analytic function on $z+\mathbb{D}_{\tau/2}.$ Then
 \begin{equation}
     |v'(z)| = \left|\frac{1}{2\pi i} \oint_{z+\partial{\mathbb{D}_{\tau/3}}} \frac{v(t)dt}{(t-z)^2}\right| \leq \frac{5}{\pi \tau^2} \oint_{z+\partial{\mathbb{D}_{\tau/3}}}|v(t)|dt\label{eq: Bound on derivative of an analytic function}.
 \end{equation}

\subsection{Proof of tightness}
 In this subsection, we show the tightness of $\{\Tr \hat{\mathcal{R}}_{z}^{\circ}(M)\}_n$ in the space $\mathcal{C}(\partial{\mathbb{D}}_{\rho+\tau})$.
Since
$\Tr \hat{\mathcal{R}}_z^{\circ}(M) = \Tr \hat{\mathcal{R}}_z^{\circ}(T_1)+\Tr \hat{\mathcal{R}}_z^{\circ}(T_2)$, it is sufficient to show that both $\{\Tr \hat{\mathcal{R}}_z^{\circ}(T_1)\}_{n}$ and $\{\Tr \hat{\mathcal{R}}_z^{\circ}(T_2)\}_{n}$ are tight. To prove the tightness of $\Tr \hat{\mathcal{R}}_z^{\circ}(T_1)$ we need to show that for any $\epsilon>0$ there exit a compact set $\mathcal{K}(\epsilon)\subset \mathcal{C}(\partial{\mathbb{D}}_{\rho+\tau}) $ such that
\begin{equation}
    \mathbb{P}\left(\Tr \hat{\mathcal{R}}_z^{\circ}(T_1)\in \mathcal{K}(\epsilon)^c\right)< \epsilon. \label{eq: tightness main probability condition}
\end{equation}

However by Arzela-Ascoli theorem, the compact sets in space of continuous functions are space of equicontinuous functions. Let us define,
$$\Phi_n:= \{|\gamma_{1k}(z)|<n^{-1/8}, \text{  for all  } 1\leq k\leq \lceil n/2 \rceil \}.$$

Therefore from \eqref{eq: probability estimate of gamma_1k}, 
$$
\mathbb{P}(\Phi_n^c) \leq K  \exp{\left(\log\left(\frac{n}{2}\right)-\tau \sqrt{\frac{\alpha}{2 }} n^{\frac{3}{8}} \right)} = o(1).
$$

 Consequently,
 \begin{align}
     \mathbb{P}\left(\Tr \hat{\mathcal{R}}_z^{\circ}(T_1)\in \mathcal{K}(\epsilon)^c\right) \notag
     &\leq \mathbb{P}\left(  \frac{| \Tr \hat{\mathcal{R}}_z^{\circ}(T_1)-\Tr \hat{\mathcal{R}}_\eta^{\circ}(T_1)|}{|z-\eta|} > K \right) \notag\\
     &\leq \mathbb{P}\left(   \frac{| \Tr \hat{\mathcal{R}}_z^{\circ}(T_1)-\Tr \hat{\mathcal{R}}_\eta^{\circ}(T_1)|}{|z-\eta|} 1_{\Phi_n} > K \right) + o(1),
 \end{align}
for some $K>0$. Now, the following equation \eqref{eq: expectation of square of difference of resolvents} is sufficient to prove \eqref{eq: tightness main probability condition};
\begin{equation}
    \mathbb{E}\left\{\left|\frac{|\Tr \hat{\mathcal{R}}_z^{\circ}(T_1)-\Tr \hat{\mathcal{R}}_\eta^{\circ}(T_1)}{z-\eta} \right|^2  1_{\Phi_n}\right\} \leq C, \label{eq: expectation of square of difference of resolvents}
\end{equation}
uniformly for all $z, \eta \in \partial{\mathbb{D}}_{\rho+\tau}$ and $n\in \mathbb{N}$. Using the resolvent identity, we can write
$$
\Tr  \hat{\mathcal{R}}_z(T_{1})-\Tr  \hat{\mathcal{R}}_\eta(T_{1})=(z-\eta) \Tr \left[\hat{\mathcal{R}}_z(T_{1})\hat{\mathcal{R}}_\eta(T_{1})\right].
$$
Therefore, using \eqref{eq: trace as martingale difference} we have
\begin{align}
    (z-\eta)^{-1}\left[\Tr  \hat{\mathcal{R}}_z^\circ(T_{1})-\Tr  \hat{\mathcal{R}}_\eta^\circ(T_{1})\right]=\sum_{k=1}^{\lceil n/2 \rceil} \mathbb{E}_k\left\{ \left(\Tr \left[\hat{\mathcal{R}}_k(T_{1}) \hat{\mathcal{R}}_\eta(T_{1})\right)\right]_k^\circ \right\}. \label{eqn: tightness: difference of tr R as a product of R}
\end{align}

Using Lemma \ref{lem: sherman morrison} and the resolvent identity, we have
\begin{align*}
\Tr &\left[\hat{\mathcal{R}}_z(T_1) \hat{\mathcal{R}}_\eta(T_1)-\hat{\mathcal{R}}_z\left(T_{1}^{(k)}\right) \hat{\mathcal{R}}_\eta\left(T_{1}^{(k)}\right)\right] \\
&=\Tr \left[\left\{\hat{\mathcal{R}}_z(T_1)-\hat{\mathcal{R}}_z(T_{1}^{(k)})\right\} \left\{\hat{\mathcal{R}}_\eta(T_1)-\hat{\mathcal{R}}_\eta(T_{1}^{(k)})\right\}\right] \\
&\quad+\Tr \left[\hat{\mathcal{R}}_z\left(T_{1}^{(k)}\right)\left\{\hat{\mathcal{R}}_\eta(T_1)-\hat{\mathcal{R}}_\eta\left(T_{1}^{(k)}\right)\right\}\right]+\Tr \left[\left\{\hat{\mathcal{R}}_z(T_1)-\hat{\mathcal{R}}_z\left(T_{1}^{(k)}\right)\right\} \hat{\mathcal{R}}_\eta\left(T_{1}^{(k)}\right)\right] \\
&=\Tr  \frac{\hat{\mathcal{R}}_z\left(T_{1}^{(k)}\right) t_k^1 e_k^t \hat{\mathcal{R}}_z\left(T_{1}^{(k)}\right)}{1+\gamma_{k1}(z)} \frac{\hat{\mathcal{R}}_\eta\left(T_{1}^{(k)}\right) t_k^1 e_k^t \hat{\mathcal{R}}_\eta\left(T_{1}^{(k)}\right)}{1+\gamma_{k1}(\eta)} \\
&\quad+\Tr  \frac{\hat{\mathcal{R}}_z\left(T_{1}^{(k)}\right) \hat{\mathcal{R}}_\eta \left(T_{1}^{(k)}\right) t_k^1 e_k^t \hat{\mathcal{R}}_\eta \left(T_{1}^{(k)}\right)}{1+\gamma_{k1}(\eta)}+\Tr \frac{\hat{\mathcal{R}}_z\left(T_{1}^{(k)}\right) t_k^1 e_k^t \hat{\mathcal{R}}_z\left(T_{1}^{(k)}\right) \hat{\mathcal{R}}_\eta\left(T_{1}^{(k)}\right)}{1+\gamma_{k1}(z)} \\
&=\frac{\left(e_k^t \hat{\mathcal{R}}_z\left(T_{1}^{(k)}\right) \hat{\mathcal{R}}_\eta\left(T_{1}^{(k)}\right) t_k^1\right)^2}{\left(1+\gamma_{k1}(z)\right)\left(1+\gamma_{k1}(\eta)\right)}+\frac{e_k^t \hat{\mathcal{R}}_\eta\left(T_{1}^{(k)}\right) \hat{\mathcal{R}}_z\left(T_{1}^{(k)}\right) \hat{\mathcal{R}}_\eta\left(T_{1}^{(k)}\right) t_k^1}{1+\gamma_{k1}(\eta)} \\
&\quad+\frac{e_k^{t} \hat{\mathcal{R}}_z\left(T_{1}^{(k)}\right) \hat{\mathcal{R}}_\eta \left(T_{1}^{(k)}\right) \hat{\mathcal{R}}_z\left(T_{1}^{(k)}\right) t_k^1}{1+\gamma_{k1}(z)} \\
&=:\vartheta_1(k)+\vartheta_2(k)+\vartheta_3(k).
\end{align*}

By \eqref{eqn: centering of R after removing kth column}, we have $\left[\Tr  \left(\hat{\mathcal{R}}_z\left(T_{1}^{(k)}\right) \hat{\mathcal{R}}_\eta\left(T_{1}^{(k)}\right)\right)\right]_k^\circ=0$. Therefore, we can express $\eqref{eqn: tightness: difference of tr R as a product of R}$ as

$$
(z-\eta)^{-1}\left[\Tr  \hat{\mathcal{R}}_z(T_1)-\Tr  \hat{\mathcal{R}}_\eta^{\circ}(T_1)\right]=\sum_{k=1}^{\lceil n/2 \rceil} \mathbb{E}_k\left\{\left[\vartheta_1(k)+\vartheta_2(k)+\vartheta_3(k)\right]_k^\circ \right\} .
$$
Since $\left\{\left[\vartheta_i(k)\right]_k^\circ \right\}$ is a martingale difference sequence,
$$
 \mathbb{E} \, \left| \, \sum_{k=1}^{\lceil n/2 \rceil} \mathbb{E}_k \left\{ \left[ \vartheta_{1}(k) + \vartheta_2(k) + \vartheta_3(k) \right]_k^\circ \right\} \right|^2 = \sum_{k=1}^{\lceil n/2 \rceil} \mathbb{E} \left| \mathbb{E}_k \left\{ \left[ \vartheta_1(k) + \vartheta_2(k) + \vartheta_3(k) \right]_k^\circ \right\} \right|^2.
$$
Hence, proving \eqref{eq: expectation of square of difference of resolvents} is equivalent to verifying the validity of
\begin{align}
    n\mathbb{E} [|\vartheta_i(k)|^2 1_{\mathrm{\Phi_n}}] \leq C, \text{ for all } 1 \leq k \leq \lceil n/2 \rceil, \;\; i=1,2,\dots. \label{eqn: tightness: theta^2 on Phi_n is bounded}
\end{align}
uniformly for all $n \in \mathbb{N}$ and $z, \eta \in \partial{\mathbb{D}}_{\rho + \tau}$. By adopting the same approach as demonstrated in \eqref{eq: moment of gamma_1k}, similar tail estimates for $e_k^t\hat{\mathcal{R}}_z\left(T_{1}^{(k)}\right) \hat{\mathcal{R}}_\eta \left(T_{1}^{(k)}\right) t_k^1$ can be obtained possibly with $\tau^{2}$ or $\tau^{3}$ in the rhs of \eqref{eq: moment of gamma_1k}. Noting that $|\gamma_{1k}(z)| < n^{-1/8}$ on $\Phi_n$, leveraging the estimate $|(1+\gamma_{1k}(z))|^{-1}\leq 2$ in $\mathbb{E}[|\vartheta_i(k)|^2],$ we have \eqref{eqn: tightness: theta^2 on Phi_n is bounded}. Similarly, we can prove the tightness of $\Tr \hat{\mathcal{R}}_z^{\circ}(T_2)$.

\subsection{Proof of \eqref{eq: first equivalent cond of MCLT}}

To estimate the right hand side of \eqref{eq: first equivalent cond of MCLT}, we first estimate the term $|W_{z,k}(T_1)+W_{z,k}(T_2)|^4$. We proceed by expanding $\operatorname{\log}(1+\gamma_{k1}(z))$ and $\operatorname{\log}(1+\gamma_{k2}(z))$ up to two terms first, and then using the estimates from \eqref{eq: moment of gamma_1k},\eqref{eq: moment of gamma_2k},\eqref{eq: W_z_k as derivative of log}, and \eqref{eq: Bound on derivative of an analytic function} we get the bound

    $$
    \mathbb{E}[|W_{z,k}(T_1)+W_{z,k}(T_2)|^4] = O(n^{-2}).
    $$

By substituting the above into \eqref{eq: first equivalent cond of MCLT}, we get
\begin{align*}
    \sum _{k=1}^{\lceil n/2 \rceil}\mathbb{E}[\psi_{n,k}^2 1_{|\psi_{n,k}|>\theta}] &\leq \frac{(2q)^3}{\theta^2}\sum_{k=1}^{\lceil n/2\rceil}\sum_{i=1}^{q}(|\alpha_i|^4 \mathbb{E}[|W_{z_{i},k}(T_1)+W_{z_{i},k}(T_2)|^4]\\
   &+|\beta_i|^4\mathbb{E}[|W_{\bar z_{i},k}(T_1\conjugate )+W_{\bar z_{i},k}(T_2\conjugate)|^4]) \\
   &= \theta^{-2}O(n^{-1}),
  \end{align*} 

which implies \eqref{eq: first equivalent cond of MCLT}.
\subsection{Proof of \eqref{eq: second equivalent cond of MCLT}}

Likewise, we can expand $\operatorname{\log}(1+\gamma_{k1}(z))$ and $\operatorname{\log}(1+\gamma_{k2}(z))$ up to two terms and use \eqref{eq: moment of gamma_1k}, \eqref{eq: moment of gamma_2k} to get the following estimate
$$
\Xi_{k}\{\mathbb{E}_k[(\log(1+\gamma_{1k}(z))+\log(1+\gamma_{2k}(z)))_k^\circ)]\mathbb{E}_k[(\log(1+\gamma_{1k}(\eta))+\log(1+\gamma_{2k}(\eta)))_k^\circ)]\} = O(n^{-2}).
$$
Thus, using \eqref{eq: W_z_k as derivative of log},\eqref{eq: Bound on derivative of an analytic function} and the above we have,
$$
\sum_{k=1}^{\lceil n/2 \rceil}\mathbb{E}_{k-1}[(W_{z_i,k}(T_1)+W_{z_i,k}(T_2))(W_{z_j,k}(T_1)+W_{z_j,k}(T_2))] = O(n^{-1}),
$$
which implies \eqref{eq: second equivalent cond of MCLT}.

\subsection{Proof of \eqref{eq: third equivalent cond of MCLT}}

Expanding $\operatorname{\log}(1+\gamma_{k1}(z))$ and $\operatorname{\log}(1+\gamma_{k2}(z))$ up to two terms and using \eqref{eq: moment of gamma_1k},\eqref{eq: moment of gamma_2k} and \eqref{eq: W_z_k as derivative of log} we have
$$
\mathbb{E}_{k-1}[(W_{z,k}(T_1)+W_{z,k}(T_2))(W_{\bar{\eta},k}(T_1\conjugate)+W_{\bar{\eta},k}(T_2\conjugate))] = \frac{\partial^2}{\partial z \partial \bar{\eta}}\Theta_{k}(z,\bar{\eta}),
$$
where 
\begin{align}
    &\Theta_{k}(z,\bar{\eta})\notag\\
    &= \Xi_{k}\{\mathbb{E}_k[(\log(1+\gamma_{1k}(z))+\log(1+\gamma_{2k}(z)))_k^\circ]\mathbb{E}_k[(\log(1+\overline{\gamma_{1k}(\eta)})+\log(1+\overline{\gamma_{2k}(\eta)}))_k^\circ]\}\notag\\
    &= \frac{1}{n}\left(e_k^t \mathbb{E}_k [\hat{\mathcal{R}}_{z}(T_1^k)]\Var((T_{1}^{(k)})_{ki})I\mathbb{E}_k [\hat{\mathcal{R}}_{\bar{\eta}}(T_1^k\conjugate)e_k]\right)\notag\\
    &+\frac{1}{n}\left(e_k^t \mathbb{E}_k [\hat{\mathcal{R}}_{z}(T_2^k)]\Var((T_{2}^{(k)})_{ki})I\mathbb{E}_k [\hat{\mathcal{R}}_{\bar{\eta}}(T_2^k\conjugate)e_k]\right)+O(n^{-2})\notag\\
    &=: \frac{1}{n}\Gamma_{k}(z,\bar{\eta})+O(n^{-2})\notag,
\end{align}
where $(T_{1}^{(k)})_{ij}$ and $(T_{2}^{(k)})_{ij}$ are the $ij$th element of $T_{1}^{(k)}$ and $T_{2}^{(k)}$ respectively.


Consequently, \eqref{eq: third equivalent cond of MCLT} becomes
\begin{align}
    &\sum_{k=1}^{\lceil n/2 \rceil}\mathbb{E}_{k-1}[(W_{z,k}(T_1)+W_{z,k}(T_2))(W_{\bar{\eta},k}(T_1)+W_{\bar{\eta},k}(T_2))]\notag\\
    &= \frac{\partial^2}{\partial z \partial \bar{\eta}}\left[\frac{1}{n}\sum _{k=1}^{\lceil n/2 \rceil}\Gamma_{k}(z,\bar{\eta})\right]+O(n^{-1}) \notag\\
    &=: \frac{\partial^2}{\partial z \partial \bar{\eta}}\chi_{k}(z,\bar{\eta})+O(n^{-1})\label{eqn: definition of chi}.
\end{align}

Since $\|\hat{\mathcal{R}}_z(T_1^k) + \hat{\mathcal{R}}_z(T_2^k)\| \leq 4\tau^{-1}$ on $\mathbb{D}_{\rho + \tau/2}^c$, the sequence $\chi_{k}(z, \bar{\eta})$ consists of uniformly bounded analytic functions on $\mathbb{D}_{\rho + \tau/2}^c$. Therefore, by Vitali's theorem (see \cite[Section 2.9]{conway2012functions}), proving \eqref{eq: third equivalent cond of MCLT} is equivalent to show that $\chi_{k}(z, \bar{\eta})$ converges in probability.

\begin{lemma}
    Let $\chi_{k}(z, \bar{\eta})$ be the same as defined in \eqref{eqn: definition of chi}, that is, $\chi_{k}(z, \bar{\eta}) = \frac{1}{n}\sum _{k=1}^{\lceil n/2 \rceil}\Gamma_{k}(z,\bar{\eta}).$ Then $\Var(\chi_{k}(z, \bar\eta))\to 0$ as $n\to\infty.$
\end{lemma}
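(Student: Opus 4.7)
The goal is to show $\Var(\chi_n(z,\bar\eta))\to 0$. The plan is to carry out a martingale decomposition along the columns of $T_1$ and $T_2$. Writing
\begin{align*}
\chi_n(z,\bar\eta) - \mathbb{E}[\chi_n(z,\bar\eta)] = \sum_{j=1}^{\lceil n/2\rceil}D_j, \qquad D_j := (\mathbb{E}_j - \mathbb{E}_{j-1})\chi_n(z,\bar\eta),
\end{align*}
the orthogonality of martingale differences yields $\Var(\chi_n) = \sum_j \mathbb{E}|D_j|^2$, so the task reduces to establishing $\mathbb{E}|D_j|^2 = o(n^{-1})$ uniformly in $j$. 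A structural simplification comes from noticing that $\Gamma_k$ is $\mathcal{F}_{n,k-1}$-measurable: the $k$-th column of $T_\ell^{(k)}$ is zero by construction, and $\mathbb{E}_k$ averages out columns $k+1,\ldots,\lceil n/2\rceil$, so $\Gamma_k$ depends only on columns $1,\ldots,k-1$ of $T_1$ and $T_2$. Consequently $(\mathbb{E}_j - \mathbb{E}_{j-1})\Gamma_k = 0$ whenever $k \leq j$, and $D_j = \frac{1}{n}\sum_{k>j}(\mathbb{E}_j - \mathbb{E}_{j-1})\Gamma_k$.

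For each $k>j$, I would expand $(\mathbb{E}_j - \mathbb{E}_{j-1})\Gamma_k$ by applying Sherman--Morrison (Lemma \ref{lem: sherman morrison}) to the rank-one change in $T_\ell^{(k)}$ caused by replacing its $j$-th column by an independent copy. This produces explicit quadratic forms of the type $e_k^t\hat{\mathcal{R}}_z(T_\ell^{(k)})\hat{\mathcal{R}}_{\bar\eta}(T_\ell^{(k)*})t_j^\ell$, whose concentration at scale $O(n^{-1/2})$ follows from the Poincaré-based tail bound \eqref{eq: probability estimate of gamma_1k} adapted as in the derivation of \eqref{eq: moment of gamma_1k}--\eqref{eq: moment of gamma_2k}. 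The algebra is further streamlined by the observations $\hat{\mathcal{R}}_z(T_\ell^{(k)})e_k = z^{-1}e_k$ and $e_k^t\hat{\mathcal{R}}_{\bar\eta}(T_\ell^{(k)*}) = \bar\eta^{-1}e_k^t$ on $\Omega_{n,k}$ (both immediate consequences of the $k$-th column of $T_\ell^{(k)}$ and the $k$-th row of $T_\ell^{(k)*}$ being zero), which reduce $\Gamma_k$ to a form amenable to direct estimation via the quadratic-form moment bounds just described.

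The principal obstacle is the control of the inner sum defining $D_j$. A direct Cauchy--Schwarz applied to $D_j = \frac{1}{n}\sum_{k>j}(\mathbb{E}_j - \mathbb{E}_{j-1})\Gamma_k$ with each summand of size $O(n^{-1/2})$ only yields $\mathbb{E}|D_j|^2 = O(n^{-1})$, and hence $\Var(\chi_n) = O(1)$, which is inadequate. The required extra factor of $n^{-1}$ must be extracted from the covariance structure of $\{(\mathbb{E}_j - \mathbb{E}_{j-1})\Gamma_k\}_{k>j}$, exploiting the weak mutual dependence of the resolvents $\hat{\mathcal{R}}_z(T_\ell^{(k)})$ and $\hat{\mathcal{R}}_z(T_\ell^{(k')})$ for $k\neq k'$ after conditioning on $\mathcal{F}_{n,j}$, together with the concentration of the quadratic forms around their conditional means. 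Executing this cross-term estimate is the heart of the argument and is where the Poincaré inequality plays its decisive role, ultimately producing $\mathbb{E}|D_j|^2 = O(n^{-2})$ and therefore $\Var(\chi_n) = O(n^{-1}) \to 0$.
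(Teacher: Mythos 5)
Your proposal diverges fundamentally from the paper's argument, and as written it contains a genuine gap that you yourself identify but do not close. The paper never performs a martingale decomposition of $\chi_n$ at all: it applies the Poincar\'e inequality directly to each $\Gamma_k^1$ (after replacing the indicator $\mathbf{1}_{\Omega_{n,k}}$ by a smooth cutoff $\omega_{\epsilon_n,k}$ so that the derivative formula is valid), computes $\sum_{i,j}\mathbb{E}|\partial\tilde\Gamma_k^1/\partial t^1_{ij}|^2$ explicitly using the resolvent derivative identity, and concludes $\Var(\tilde\Gamma_k^1)=O(1/n)$. It then uses only the crude variance-of-sum bound $\Var(\tfrac1n\sum_k X_k)\le \tfrac1n\sum_k\Var(X_k)$ together with the $n^{-2}$ normalization, which already delivers $\Var(\chi_n)=O(1/n)$ without any appeal to cross-term cancellation. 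In contrast, you decompose $\chi_n=\sum_j D_j$ into martingale differences along columns, and you candidly observe that the naive Cauchy--Schwarz estimate on the inner sum $D_j=\tfrac1n\sum_{k>j}(\mathbb{E}_j-\mathbb{E}_{j-1})\Gamma_k$ only gives $\mathbb{E}|D_j|^2=O(n^{-1})$, hence $\Var(\chi_n)=O(1)$, which is not enough. The claim that the extra factor of $n^{-1}$ can be ``extracted from the covariance structure'' of $\{(\mathbb{E}_j-\mathbb{E}_{j-1})\Gamma_k\}_{k>j}$ is precisely what a complete proof would have to establish, and your proposal stops at asserting it rather than proving it. This is the essential missing step; without it the argument is incomplete, and your route is strictly harder than the paper's because you must resolve an off-diagonal cancellation that the paper structurally never needs.

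A secondary issue: the ``streamlining'' identities $\hat{\mathcal{R}}_z(T_\ell^{(k)})e_k=z^{-1}e_k$ and $e_k^t\hat{\mathcal{R}}_{\bar\eta}(T_\ell^{(k)*})=\bar\eta^{-1}e_k^t$ are both true, but they do not simplify $\Gamma_k^1=4e_k^t\,\mathbb{E}_k[\hat{\mathcal{R}}_z(T_1^{(k)})]\,\mathbb{E}_k[\hat{\mathcal{R}}_{\bar\eta}(T_1^{(k)*})]\,e_k$: there $e_k^t$ multiplies $\hat{\mathcal{R}}_z(T_1^{(k)})$ from the left (which picks out a row of the resolvent, not its $k$-th column, and the $k$-th row of $T_1^{(k)}$ is not zero), and $e_k$ multiplies $\hat{\mathcal{R}}_{\bar\eta}(T_1^{(k)*})$ from the right (picking out a column of the second resolvent, and the $k$-th column of $T_1^{(k)*}$ is not zero either). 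So the quantities you propose to ``directly estimate'' are genuinely full rows/columns of resolvents, not scalar multiples of $e_k$. This does not invalidate the plan, but it removes the shortcut you hoped for and leaves the main estimate even less constrained.
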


\begin{proof}
Since $\{W_{z,k}(T_1)+W_{z,k}(T_2)\}_n$ is a martingale difference sequence, we have 
$$ 
\mathbb{E}[(W_{z,k}(T_1)+W_{z,k}(T_2))(W_{\bar{\eta},l}(T_1\conjugate)+W_{\bar{\eta},l}(T_2\conjugate))]=0 \text{ if } k \neq l. $$
This implies,
\begin{align}
    &\mathbb{E}\left\{\sum_{k=1}^{\lceil n/2 \rceil}\mathbb{E}_{k-1}[(W_{z,k}(T_1)+W_{z,k}(T_2))(W_{\bar{\eta},k}(T_1\conjugate)+W_{\bar{\eta},k}(T_2\conjugate))]\right\}\notag\\
    &= \mathbb{E}\left\{\left(\sum_{k=1}^{\lceil n/2 \rceil}(W_{z,k}(T_1)+W_{z,k}(T_2))\right)\left(\sum_{k=1}^{\lceil n/2 \rceil}(W_{\bar{\eta},k}(T_1\conjugate)+W_{\bar{\eta},k}(T_2\conjugate))\right)\right\}\notag\\
    &= \mathbb{E}\left\{\left(\Tr \hat{\mathcal{R}}_z^{\circ}(T_1)+\Tr \hat{\mathcal{R}}_z^{\circ}(T_2)\right)\left(\Tr \hat{\mathcal{R}}_\eta^{\circ}(T_1\conjugate)+\Tr \hat{\mathcal{R}}_\eta^{\circ}(T_2\conjugate)\right)\right\}\notag\\
    &= \mathbb{E}\left\{\Tr \hat{\mathcal{R}}_z^{\circ}(M) \Tr \hat{\mathcal{R}}_{\bar{\eta}}^{\circ}(M^*)\right\}.\notag
\end{align}
Let us rewrite $\Gamma_{k}(z,\bar{\eta})$ as
\begin{align}
    &\Gamma_{k}(z,\bar{\eta})\notag\\
    &=4e_k^t \mathbb{E}_k [\hat{\mathcal{R}}_{z}(T_1^k)]\mathbb{E}_k [\hat{\mathcal{R}}_{\bar{\eta}}(T_1^k\conjugate)]e_k+4e_k^t \mathbb{E}_k [\hat{\mathcal{R}}_{z}(T_2^k)]\mathbb{E}_k [\hat{\mathcal{R}}_{\bar{\eta}}(T_2^k\conjugate)]e_k\notag\\
    &:= \Gamma_{k}^1(z,\bar{\eta})+\Gamma_{k}^2(z,\bar{\eta}).\notag
\end{align}
Consequently, we may decompose
\begin{align}
    &\Var\left(\frac{1}{n}\Gamma_{k}(z,\bar{\eta})\right)\notag\\
    &=\Var\left[\frac{1}{n}\Gamma_{k}^1(z,\bar{\eta})+\frac{1}{n}\Gamma_{k}^2(z,\bar{\eta})\right] \notag\\
    &\leq 2\Var\left(\frac{1}{n}\Gamma_{k}^1(z,\bar{\eta})\right)+2\Var\left(\frac{1}{n}\Gamma_{k}^2(z,\bar{\eta})\right). \notag
\end{align}
Therefore, proving the lemma is equivalent to showing
\begin{align*}
    \Var\left(\frac{1}{n}\Gamma_{k}^1(z,\bar{\eta})\right)+\Var\left(\frac{1}{n}\Gamma_{k}^2(z,\bar{\eta})\right) \rightarrow 0.
\end{align*}
We show that $\Var\left(\frac{1}{n}\Gamma_{k}^1(z,\bar{\eta})\right) \rightarrow 0.$ We shall use the Poincar\'e inequality to get estimates on $\Var\left(\frac{1}{n}\Gamma_{k}^1(z,\bar{\eta})\right).$ This is shown in the equation \eqref{eqn: variance bound by poincare inequality}. However, since we are working on the event $\Omega_{n, k}$, where norms of the matrices are bounded, some terms may not be differentiable with respect to the matrix entries. To circumvent such a problem, we define a smoothing function as follows.


Let $\epsilon_n= 4/n, \text{ and } \omega_{\epsilon_n,k}:\mathbb{R}^{\lceil n^2/4 \rceil} \rightarrow [0,1]$ be a smooth function such that 
\begin{align*}
&\omega_{\epsilon_n,k}|_{\{\|T_1^k\|\leq \rho_1\}} \equiv 1, \\
&\omega_{\epsilon_n,k}|_{\{\|T_1^k\|\geq \rho_1+\epsilon_n\}} \equiv 0,\\
&\left|\frac{\partial}{\partial x_{i}}\omega_{\epsilon_n,k}\right| \leq \frac{2}{\epsilon_n}, \;\;\;\; \forall \;\; 1\leq i \leq \lceil n^{2}/4\rceil.
\end{align*}
Let us define the smoothed version of $\mathcal{R}$ as $$\tilde{\mathcal{R}}_z(T_1^k)=\mathcal{R}_{z}(T_1^k)\omega_{\epsilon_n,k},$$
and
$$
\tilde{\Gamma}_{k}^1(z,\bar{\eta}) =4e_k^t \mathbb{E}_k [\tilde{\mathcal{R}}_z(T_1^k)]\mathbb{E}_k [\tilde{\mathcal{R}}_{\bar{\eta}}(T_1^k\conjugate)]e_k.
$$
Since the smoothed version and the original one match everywhere except the narrow annulus $\{\rho_1<\|T_1^k\|<\rho_1+\epsilon_n\}$, we can easily obtain
$$
|\Gamma_{k}^1(z,\bar{\eta})-\tilde{\Gamma}_{k}^1(z,\bar{\eta})| \leq\frac{K}{(\tau - \epsilon_n)^2}1_{\{\rho_1<\|T_1^k\|<\rho_1+\epsilon_n\}},
$$

where $K$ is some universal constant. Now, by Lemma \ref{lemma:Norm bound}, we have, 
\begin{align}
    \mathbb{E}[|\Gamma_{k}^1(z,\bar{\eta})-\tilde\Gamma_{k}^1(z,\bar{\eta})|^2] \leq \frac{K}{(\tau - \epsilon_n)^4}  \exp{\left(-\tau \sqrt{\frac{\alpha n}{2}}\frac{3\rho_1}{4}\right)}.\label{eqn: diff between Gamma and tilde Gamma}
\end{align}

Therefore the smoothing function $\omega$ did not alter the original quantities by significant amount. For the notational convenience, let us denote
\begin{align*}
    \Lambda &:= \mathbb{E}_k[\mathcal{R}_{z}(T_1^k)], & \Upsilon &:= \mathbb{E}_k[\mathcal{R}_{\eta}(T_2^k)^\conjugate],\\
    \tilde{\Lambda} &:= \mathbb{E}_k[\tilde{\mathcal{R}}_z(T_1^k)], &\tilde{\Upsilon} &:= \mathbb{E}_k[\tilde{\mathcal{R}}_\eta(T_2^k)^\conjugate].
\end{align*}

So, $\tilde{\Gamma}_{k}^1(z,\bar{\eta}) = \sum_{s=0}^{\lceil n/2 \rceil} \Tilde{\Lambda}_{ks}\Tilde{\Upsilon}_{sk}.$ Since $x_{ij}s$ satisfy Poincar\'e inequality, we have

\begin{align}
    \Var(\Tilde\Gamma_{k}^1(z,\bar{\eta})) \leq \frac{1}{n\alpha} \sum_{i,j=1}^{k-1}\left[\mathbb{E}\left|\frac{\partial \Tilde\Gamma_{k}^1(z,\bar{\eta})}{\partial t^{1}_{ij}}\right|^2 +\mathbb{E}\left|\frac{\partial \Tilde\Gamma_{k}^1(z,\bar{\eta})}{\partial \bar{t^{1}}_{ij}}\right|^2\right].\label{eqn: variance bound by poincare inequality}
\end{align}
The sum is divided by $n$ because the matrix $T_{1}$ is scaled by $1/\sqrt{n}$. Moreover, the sum stops at $k-1$ because $\Tilde\Gamma_{k}^1(z,\bar{\eta})$ is a constant for $i, j = k,k+1,\dots, \lceil n/2 \rceil$. On the other hand, we have
\begin{align*}
    \frac{\partial \mathcal{R}_{z}(T_1^k)_{ks}}{\partial t_{ij}^1} &= \mathcal{R}_{z}(T_1^k)_{ki} \mathcal{R}_{z}(T_1^k)_{js}\mathbf{1}_{\{j\neq k\}},& 
    \frac{\partial \overline{\mathcal{R}_{z}(T_1^k)_{ks}}}{\partial t_{ij}^1}&=0, 
\end{align*}

As a result,
\begin{align*}
    &\frac{\partial \tilde{\Lambda}_{k s}}{\partial t^{1}_{i j}}=\tilde{\Lambda}_{k i} \tilde{\Lambda}_{j s}\mathbf{1}_{\{j\neq k\}}+\Lambda_{k s} \frac{\partial \omega_{\epsilon_n, k}}{\partial t^{1}_{i j}} 1_{\left\{\rho_1<\|T_{1}^{(k)}\|<\rho_1+\epsilon_n\right\}}, \\
    &\frac{\partial \tilde{\Upsilon}_{s k}}{\partial t^{1}_{i j}}=\Upsilon_{s k} \frac{\partial \omega_{\epsilon_n, k}}{\partial t^{1}_{i j}} 1_{\left\{\rho_1<\|T_{1}^{(k)}\|<\rho_1+\epsilon_n\right\}}, \\
    &\frac{\partial\left(\tilde{\Gamma}_{k}^1(z, \bar{\eta})\right)}{\partial t^{1}_{i j}}=  \sum_{s=0} ^{\lceil n/2 \rceil} \tilde{\Lambda}_{k i} \tilde{\Lambda}_{j s} \tilde{\Upsilon}_{s k}+\sum_{s=0} ^{n/2}\left(\Lambda_{k s } \tilde\Upsilon_{s k}+\tilde{\Lambda}_{k s} \Upsilon_{s k}\right) \frac{\partial \omega_{\epsilon_n, k}}{\partial t^{1}_{i j}} 1_{\left\{\rho_1<\|T_{1}^{(k)}\|<\rho_1+\epsilon_n\right\}}.
\end{align*}

Let us denote $\tilde{y}_{j k}=\sum_{s=0} ^{\lceil n/2 \rceil} \tilde{\Lambda}_{j s} \tilde{\Upsilon}_{s k}$. Then using the facts that $\|\tilde{\Lambda}\|,\|\tilde{\Upsilon}\| \leq\left(\tau-\epsilon_n\right)^{-1}$, we have
\begin{align*}
    &\sum_{i,j=1}^{k-1} \left|\sum_{s=0} ^{\lceil n/2 \rceil} \tilde{\Lambda}_{k i} \tilde{\Lambda}_{j s} \tilde{\Upsilon}_{s k}\right|^2=\sum_{i,j=1}^{k-1} \left|\tilde{\Lambda}_{k i} \tilde{y}_{j k}\right|^2 \leq\left\|\tilde{\Lambda}_k\right\|_2^2\left\|\tilde{y}_k\right\|_2^2 \leq\left(\tau-\epsilon_n\right)^{-6}, \\
    &\sum_{i,j=1}^{k-1} \left|\sum_{s=0} ^{\lceil n/2 \rceil}\left(\Lambda_{ks} \tilde{\Upsilon}_{s k}+\tilde{\Lambda}_{ks} \Upsilon_{s k}\right) \frac{\partial \omega_{\epsilon_n, k}}{\partial a_{i j}} \mathbf{1}_{\left\{\left(\rho_1<\|T_1^k\|<\rho_1+\epsilon_n\right\}\right.}\right|^2 \\
  &\leq \frac{10}{\epsilon_n^2} \sum_{i,j=1}^{k-1} \left\|\tilde{\Lambda}_k\right\|_2^2\left\|\tilde{\Upsilon}_k\right\|_2^2 1_{\left\{\rho<\|T_{1}^{(k)}\|<\rho+\epsilon_n\right\}} \leq 10 n^4\left(\tau-\epsilon_n\right)^{-4} 1_{\left\{\rho_1<\|T_{1}^{(k)}\|<\rho_1+\epsilon_n\right\}} .
\end{align*}

Using the above estimates, \eqref{eqn: event that T_1, T_2 are bounded}, we have
\begin{align}
    &\Var\left(\frac{1}{n} \sum_{k=1}^{\lceil n/2 \rceil} \tilde{\Gamma}_{k}^1(z, \bar{\eta})\right) \notag\\
    &\leq \frac{1}{n} \sum_{k=1}^{\lceil n/2 \rceil} \Var\left(\tilde{\Gamma}_{k}^1(z, \bar{\eta})\right)\notag\\
    &\leq \frac{2}{\alpha n\left(\tau-\epsilon_n\right)^6}+\frac{K n^3}{\left(\tau-\epsilon_n\right)^4} \exp \left(-\sqrt{\frac{\alpha n}{2} }\frac{3 \rho_{1}}{4}\right).\notag
\end{align}

Therefore from \eqref{eqn: diff between Gamma and tilde Gamma}, we obtain
\begin{align}
    &  \Var\left(\frac{1}{n} \sum_{k=1}^{\lceil n/2 \rceil} \Gamma_{k}^1(z, \bar{\eta})\right) \notag\\
    &\leq \frac{1}{n} \sum_{k=1}^{\lceil n/2 \rceil} \mathbb{E}\left[\left|\Gamma_{k}^1(z, \bar{\eta})-\tilde{\Gamma}_{k}^1(z, \bar{\eta})\right|^2\right]+\Var\left(\frac{1}{n} \sum_{k=1}^{\lceil n/2 \rceil} \tilde{\Gamma}_{k}^1(z, \bar{\eta})\right) \notag\\
    &\leq \frac{2}{\alpha n\left(\tau-\epsilon_n\right)^6}+\frac{K n^3}{\left(\tau-\epsilon_n\right)^4} \exp \left(-\sqrt{\frac{\alpha n}{2} }\frac{3 \rho_{1}}{4}\right) \rightarrow 0 \text{ as } n\rightarrow \infty.\notag
\end{align}

Similarly, $ \Var\left(\frac{1}{n} \sum_{k=1}^{\lceil n/2 \rceil} \Gamma_{k}^2(z, \bar{\eta})\right) \rightarrow 0 \text{ as } n \rightarrow \infty.$ Thus,
\begin{align}
    &\Var(\chi_{k}(z, \bar{\eta}))\notag\\
    &\leq   2\Var\left(\frac{1}{n} \sum_{k=1}^{\lceil n/2 \rceil} \Gamma_{k}^1(z, \bar{\eta})\right)+2\Var\left(\frac{1}{n} \sum_{k=1}^{\lceil n/2 \rceil} \Gamma_{k}^2(z, \bar{\eta})\right) 
     \rightarrow 0, \text { as } n\rightarrow \infty .\notag
\end{align}

\end{proof}

This completes the proof of the central limit theorem. We now demonstrate the calculation of the covariance kernel in Section \ref{sec:variance calculation}. Before that let us verify this result through numerical simulation; see Figure \ref{fig:prop_thm1}.
\begin{figure}[H]\label{5}
    \centering
    \includegraphics[scale=0.7]{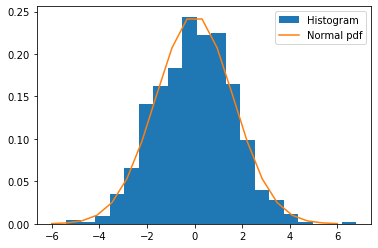}
    \caption{Histogram of $\operatorname{L}^\circ(P_{d})/\sqrt{n}$ of a $4000\times 4000$ random centrosymmetric matrix sampled 1000 times.
    The entries of the matrix are drawn from the Gaussian distribution, where $P_{d}(x)=2x^3+x^4$.}
    \label{fig:prop_thm1}
\end{figure}


\section{Calculation of variance}\label{sec:variance calculation}

This section is dedicated to calculating the covariance kernel. We proceed with the expression of the variance in the case of polynomial test functions.
\begin{prop}\label{Prop:Var_complex} 
Let $M$ be the random matrix as defined in Definition \ref{centrodefn} satisfying the Condition \ref{cond:matrixcond}. Then we have
    $$
    \Var(\Tr (P_{d}(M)))  =  \sum_{k=1}^{d} 2ka_{k}^2 + o(1),
     $$
where $P_{d}(x)=\sum_{k=1}^{d} a_{k} x^{k}$ is a polynomial of degree $d \geq 1$ with real coefficients.   
\end{prop}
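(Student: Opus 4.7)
The plan is to exploit the block decomposition from Theorem 4.1: since $Q^{T} M Q = \mathrm{diag}(T_1, T_2)$, we have $\Tr M^k = \Tr T_1^k + \Tr T_2^k$ for every $k \ge 1$. By the bilinearity of covariance (using $\Cov(X,Y) = \mathbb{E}[X\overline{Y}] - \mathbb{E}[X]\overline{\mathbb{E}[Y]}$ for complex random variables),
\begin{align*}
\Var(\Tr P_d(M)) = \sum_{k,\ell=1}^{d} a_k \overline{a_\ell} \sum_{r,s \in \{1,2\}} \Cov\bigl(\Tr T_r^k,\ \Tr T_s^\ell\bigr),
\end{align*}
so it suffices to prove $\Cov(\Tr T_r^k, \Tr T_s^\ell) = k\,\delta_{rs}\,\delta_{k\ell} + o(1)$ for $r, s \in \{1, 2\}$.

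For the diagonal case $r = s$, I observe that the entries of $T_r$ (inside the $A \pm JC$ block) are i.i.d.\ with $\mathbb{E}[(T_r)_{ij}] = 0$, $\mathbb{E}[(T_r)_{ij}^2] = 0$, and $\mathbb{E}[|(T_r)_{ij}|^2] = 2/n$; since $T_r$ has size $\lceil n/2 \rceil$, this is precisely the Ginibre normalization (entries of variance $1/s$ for a matrix of size $s$). Expanding $\mathbb{E}[\Tr T_r^k \overline{\Tr T_r^\ell}]$ as a sum over pairs of closed index walks of lengths $k$ and $\ell$, the assumption $\mathbb{E}[x_{ij}^2] = 0$ forces each edge of the $k$-walk to pair with a unique edge of the $\ell$-walk traversed in the opposite direction, already requiring $k = \ell$. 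A classical Rider--Silverstein-type enumeration then shows that only non-crossing pairings give a $\Theta(1)$ contribution, their count is exactly $k$, and all remaining index configurations carry a surplus power of $n^{-1}$. The bordering row and column of $T_1$ when $n$ is odd have size $O(1)$ and contribute only $o(1)$. Hence $\Cov(\Tr T_r^k, \Tr T_r^\ell) = k\,\delta_{k\ell} + o(1)$.

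For the off-diagonal case $r \ne s$, the essential observation is the entry-level cancellation
\begin{align*}
\mathbb{E}\bigl[(T_1)_{ij}\,\overline{(T_2)_{pq}}\bigr] = 0 \quad\text{for every } i,j,p,q,
\end{align*}
which I can verify by substituting $(T_1)_{ij} = m_{ij} + m_{n-i+1, j}$ and $(T_2)_{pq} = m_{pq} - m_{n-p+1, q}$, and checking that the four surviving cross-expectations of $m$-variables cancel in $\pm$ pairs (the centrosymmetric pairings contribute nothing in the off-diagonal block because the indices lie in $\{1, \dots, s\}$). Similarly $\mathbb{E}[(T_1)_{ij}(T_2)_{pq}] = 0$ by $\mathbb{E}[x_{ij}^2] = 0$. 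Consequently, every leading-order (second-moment) pairing between a $T_1$-edge and a $T_2$-edge in the expansion of $\mathbb{E}[\Tr T_1^k \overline{\Tr T_2^\ell}]$ vanishes; the higher-order joint cumulants that arise from the common underlying $m_{ij}$'s carry extra $n^{-1/2}$ factors and sum to $o(1)$. Thus $\Cov(\Tr T_1^k, \Tr T_2^\ell) = o(1)$, and combining with the diagonal case gives $\Cov(\Tr M^k, \Tr M^\ell) = 2k\,\delta_{k\ell} + o(1)$, which immediately yields the claimed formula.

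I expect the combinatorial bookkeeping to be the main obstacle. In the diagonal step, the Rider--Silverstein enumeration of non-crossing pairings with the exact count $k$, while classical, is still delicate. In the off-diagonal step, even though the second-moment cancellation is clean, one must catalogue all configurations in which walk edges of $T_1$ and $T_2$ can coincide through the centrosymmetric constraint, and verify that each such configuration contributes only $o(1)$ after the index sum—this higher-cumulant accounting is the most involved part of the argument.
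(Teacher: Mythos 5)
Your proposal takes a genuinely different route from the paper. The paper computes $\Var(\Tr P_d(M))$ by expanding $\Tr M^k$ directly in the entries of the centrosymmetric matrix $M$ and running a bespoke ``index chain'' combinatorics (Lemmas \ref{lemma: one_chain_complex}--\ref{lemma: cross chain merging}) that tracks intra-, cross-, and partial-chain mergings subject to the constraint $x_{ij}=x_{n+1-i,n+1-j}$. You instead pass through the block decomposition $Q^T M Q=\operatorname{diag}(T_1,T_2)$, write $\Tr M^k=\Tr T_1^k+\Tr T_2^k$, and split the covariance into diagonal and off-diagonal pieces. Your key observations — that each $T_r$ (for $n$ even) is an $s\times s$ matrix with genuinely i.i.d.\ entries of variance $1/s$ satisfying $\mathbb{E}[(T_r)_{ij}]=\mathbb{E}[(T_r)_{ij}^2]=0$, and that $\mathbb{E}[(T_1)_{ij}\overline{(T_2)_{pq}}]=\mathbb{E}[(T_1)_{ij}(T_2)_{pq}]=0$ for all indices — are both correct and cleanly verifiable from $(T_1)_{ij}=m_{ij}+m_{i,n-j+1}$, $(T_2)_{ij}=m_{ij}-m_{i,n-j+1}$ and the support of the free entries. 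This reduction is arguably more conceptual: the $2k$ in the variance appears as $k+k$ from two independent-in-distribution Ginibre-type blocks, rather than as an output of the $2pn^p$ counting in the paper, and the diagonal step inherits a classical enumeration.

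The genuine gap is in the off-diagonal estimate. Uncorrelatedness of $T_1$ and $T_2$ kills all pairwise (second-moment) matchings in the expansion of $\mathbb{E}[\Tr T_1^k\,\overline{\Tr T_2^\ell}]$, but for non-Gaussian entries $T_1$ and $T_2$ are dependent: $(T_1)_{ij}$ and $(T_2)_{ij}$ are the sum and difference of the same two i.i.d.\ variables $m_{ij},m_{i,n-j+1}$, and their higher joint cumulants need not vanish (e.g.\ $\mathbb{E}[(T_1)_{ij}^2\,\overline{(T_2)_{ij}}^2]=2\,\mathbb{E}[|m_{ij}|^4]-4/n^2\neq0$ in general). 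You assert that every such cluster ``carries extra $n^{-1/2}$ factors and sums to $o(1)$,'' and you flag this as the most involved part — but as written it is a claim, not an argument. To close it you need a moment-method bound analogous to the paper's Lemma~\ref{lemma: one_chain_complex} and the chain-merging estimates, adapted to mixed $T_1$/$T_2$ clusters and to the entry-level collisions created by the shared $m$-variables; the degree-of-freedom counting is essentially the same bookkeeping the paper does on $M$ directly, so you have not avoided it, only relocated it. The paper's approach sidesteps the inter-block dependence entirely by never decomposing. (The bordered row/column of $T_1$ for odd $n$, and the exact ``$=k$'' count in the diagonal Rider--Silverstein step, are also stated without proof, but those are more routine.)
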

Notice that, the variance expression does not contain the constant term of the polynomial $P_d(x)$. The constant term of a matrix polynomial will be a constant multiplied by the identity matrix, and this term will not affect the fluctuation result of the LES of a random matrix, as we center the LES by its mean.

We recognize that the essential ingredients are $\mathbb{E}[\Tr(M^{k})\Tr(\bar{M}^{l})]$ and $\mathbb{E}[\Tr(M^{k})]$, where $\bar M$ is obtained by taking complex conjugates of all the entries of $M$. We compute these quantities using combinatorial arguments.
Before proving the Proposition \ref{Prop:Var_complex}, we introduce and analyze some required notions such as pair partitions and chain merging in the following subsections.

\subsection{Analyzing expected product}
We evaluate $\mathbb{E}\left[x_1 x_2 \ldots x_n\right]$, where $x_is$  are random variables from condition \eqref{cond:matrixcond}. Note that $\mathbb{E}[x_i^2] = 0$ if $x_i$ satisfies condition \eqref{cond:matrixcond}.

In the next calculations, we need to evaluate the terms of the forms $\mathbb{E}[\Tr(M^{k})]$ and $\mathbb{E}[\Tr(M^{k})\Tr(\bar{M}^{l})]$. Upon expansion, the term $\Tr(M^{k})$ is represented as $\sum_{i_{1}, i_{2}, \ldots, i_{k}=1}^{n}x_{i_{1}i_{2}}$ $x_{i_{2}i_{3}}$ $\cdots$ $ x_{i_{k}i_{1}}$. Notice that here the random variables $x_{i_{1}i_{2}}$, $x_{i_{2}i_{3}}$, $\dots$ ,$ x_{i_{k}i_{1}}$ are linked via their indices $i_{1}, i_{2}, \ldots, i_{k}$. The arrangement of these indices plays an important role in evaluating expectations. We formalize this using the notation of \textit{index chain} as defined below.

\begin{defn}[Index chains]
The index chain of the product of random variables $x_{i_{1}i_{2}}$ $x_{i_{2}i_{3}}$ $\cdots$ $ x_{i_{k}i_{1}}$ is the unique sequence $i_{1}\to i_{2}\to\cdots\to i_{k}\to i_{1}$ of indices.
    \begin{enumerate}[label=(\alph*)]
        \item Single chain: In the calculation of $\mathbb{E}[\Tr(M^{k})]$, the indices of the product are chained as $i_{1}\to i_{2}\to\cdots\to i_{k}\to i_{1}$ which will be referred as single chain.
        \item Double chain: In the calculation of $\mathbb{E}[\Tr(M^{k})\Tr(\bar{M}^{l})]$, there are two chains of indices $i_{1}\to i_{2}\to \cdots\to i_{k}\to i_{1}$ and $j_{1}\to j_{2}\to \cdots\to j_{l}\to j_{1}$ which will be referred as double chain.
    \end{enumerate}
\end{defn}

In principle, the indices of an index chain can take values from the set $\{1, 2, 3, \ldots, n\}$. However, while calculating $\mathbb{E}[\Tr(M^{k})]$ and $\mathbb{E}[\Tr(M^{k})\Tr(\bar M^{l})]$, we shall see that not all the indices are free to be chosen from $\{1, 2, 3, \ldots, n\}.$ Some indices will be determined by the choices of the other indices. In this context, we shall use the terms `degree of freedom' and `free indices', which are defined in Definition \ref{dof and fv}.

\begin{defn}[Degrees of freedom and free indices]{\label{dof and fv}}
  In a given index chain, free indices are the indices which are not constrained by any other index. The degree of freedom represents the number of free indices in a chain.
\end{defn}
This will be relevant while calculating the $\mathbb{E}[\Tr M^{k}]$ or $\mathbb{E}[\Tr M^{k}\bar{M}^{l}].$ We shall use the notion of free indices in the remaining part of this section; in particular in the proof of Lemma \ref{lemma: one_chain_complex}, Lemma \ref{lemma:equal power_complex}, and Lemma \ref{lemma: cross chain merging}.

Now, we proceed by calculating $\mathbb{E}[\Tr(M^{k})]$ and $\mathbb{E}[\Tr(M^{k})\Tr(\bar{M}^{l})]$ in two different scenarios such as single chain and double chain merging.

\subsection{Single Chain Expectation}\label{sec: single chain expectation}

 In this subsection, we focus on computing the expected value of the trace of a matrix raised to the power of $k$, that is, the expectation of a single chain. Specifically, we are interested in evaluating the expressions of the form.

$$
\mathbb{E}[\mathrm{Tr}(M^{k})] = \frac{1}{n^{ {k/2}}}\sum_{i_{1},i_{2},\dots,i_{k}=1}^{n}\mathbb{E}[x_{i_{1}i_{2}}x_{i_{2}i_{3}} \dots x_{i_{(k-1)}i_{k}}x_{i_{k}i_{1}}].
$$

We state the first lemma in this regard.

\begin{lemma}{\label{lemma: one_chain_complex}}
Let $M$ be the random matrix as defined in Definition \ref{centrodefn} satisfying the Condition \ref{cond:matrixcond}. Then we have the following
\begin{align*}
    \mathbb{E}[\Tr(M^k)] &\leq n^{-k/6}k^{2k}\left(\frac{4}{3\alpha}\right)^{k}\\
    &\leq  k^{3k}n^{-k/6},\;\;\text{if $k>4/3\alpha$}.
\end{align*}
Here $\alpha$ is the constant of Poincar\'e inequality. In particular, if $k\leq n^{\beta}$ for some $\beta < 1/18$, then $\mathbb{E}[\Tr(M^k)]=o(1).$
\end{lemma}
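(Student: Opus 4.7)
The plan is to exploit the block decomposition of Theorem \ref{orthogonal}: since $M$ is orthogonally similar to $\mathrm{diag}(T_1, T_2)$, we have $\Tr(M^k) = \Tr(T_1^k) + \Tr(T_2^k)$, so it suffices to bound each piece. A direct computation shows that (aside from an isolated middle row and column when $n$ is odd) the entries of $T_1$ are i.i.d.\ of the form $(T_1)_{jk} = \frac{1}{\sqrt{n}}(x_{jk} + x_{n+1-j, k})$, with $\mathbb{E}[(T_1)_{jk}] = 0$, $\mathbb{E}[(T_1)_{jk}^2] = 0$, and $\mathbb{E}[|(T_1)_{jk}|^2] = 2/n$; the stability properties of the Poincar\'e inequality (Definition \ref{def:Poincaré inequality}) ensure each $(T_1)_{jk}$ inherits a Poincar\'e inequality whose tail bound yields the moment estimate $\mathbb{E}[|(T_1)_{jk}|^p] \leq C\,p!\,(c/(\alpha n))^{p/2}$ for universal constants $c, C$. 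The same applies to $T_2$.

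With the i.i.d.\ structure in hand, I would run the standard non-Hermitian moment method. Expand
\begin{equation*}
\mathbb{E}[\Tr(T_1^k)] = \sum_{j_1,\ldots,j_k = 1}^{\lceil n/2 \rceil} \mathbb{E}\bigl[(T_1)_{j_1 j_2}(T_1)_{j_2 j_3}\cdots (T_1)_{j_k j_1}\bigr],
\end{equation*}
and partition each summand according to the equivalence classes of the directed edges $(j_s, j_{s+1})$. Because both $\mathbb{E}[(T_1)_{jk}]$ and $\mathbb{E}[(T_1)_{jk}^2]$ vanish, every tuple containing a class of multiplicity one or two contributes zero, so only tuples in which every class has multiplicity $\geq 3$ survive; this forces $r \leq k/3$ where $r$ is the number of equivalence classes. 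Viewing the tuple as a closed directed walk on its underlying simple multigraph and invoking the fact that a strongly connected directed graph on $v$ vertices has at least $v$ edges, we obtain the vertex bound $v \leq r \leq k/3$, and hence at most $(n/2)^{k/3}$ admissible index assignments.

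Combining this with a combinatorial enumeration of the number of walk shapes and edge-partition types (crudely bounded by $k^{2k}$) and with the moment bounds above, one arrives at
\begin{equation*}
|\mathbb{E}[\Tr(T_1^k)]| \leq (n/2)^{k/3} \cdot k^{2k} \cdot k!\,(c/(\alpha n))^{k/2} \leq n^{-k/6}\, k^{2k}\,\left(\tfrac{4}{3\alpha}\right)^k,
\end{equation*}
with the analogous bound for $T_2$. Summing the two yields the first inequality of the lemma; the second inequality $\leq k^{3k}n^{-k/6}$ follows from $k^{2k}(4/(3\alpha))^k \leq k^{3k}$ whenever $k \geq 4/(3\alpha)$. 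The main obstacle I anticipate is the careful verification of the Eulerian vertex bound $v \leq r$ in the directed-multigraph setting and the bookkeeping of the combinatorial factor so that it stays within $k^{2k}$; the odd-$n$ case contributes only subleading corrections from the isolated middle row/column which are absorbed into universal constants. The final asymptotic claim is then immediate: substituting $k \leq n^\beta$ into $k^{3k}n^{-k/6}$ yields a logarithmic exponent $n^\beta\log n\,(3\beta - \tfrac{1}{6})$, which tends to $-\infty$ iff $\beta < 1/18$.
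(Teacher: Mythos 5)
Your proposal is correct, but it takes a genuinely different route from the paper's. The paper proves Lemma \ref{lemma: one_chain_complex} by working directly with the centrosymmetric matrix $M$ and expanding $\Tr(M^{k})=n^{-k/2}\sum x_{i_{1}i_{2}}\cdots x_{i_{k}i_{1}}$; because the entries satisfy $x_{ij}=x_{n+1-i,n+1-j}$, two distinct directed edges can carry the \emph{same} random variable, and the paper must account for this explicitly. This is the source of the paper's $2^{k-g}$ factor (for each group of matched variables beyond the first, the index pair can be either $(i,j)$ or $(n+1-i,n+1-j)$). You instead push the block decomposition $\Tr(M^k)=\Tr(T_1^k)+\Tr(T_2^k)$ all the way through the moment computation, observing that $(T_1)_{jk}=\tfrac{1}{\sqrt n}(x_{jk}+x_{n+1-j,k})$ (and similarly for $T_2$) are genuinely i.i.d.\ with $\mathbb{E}[(T_1)_{jk}]=\mathbb{E}[(T_1)_{jk}^2]=0$ and a Poincar\'e constant inherited from $\alpha$ via the stability properties in Definition \ref{def:Poincaré inequality}. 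That removes the centrosymmetric dependency entirely and lets you run the classical non-Hermitian moment method: edge-multiplicities $\geq 3$ forces $r\leq k/3$ distinct directed edges, strong connectivity of the walk-graph forces $v\leq r$ vertices, giving the $(n/2)^{k/3}$ index count. Your approach buys a cleaner combinatorial setting (no sign/reflection bookkeeping), at the cost of having to argue separately that the extra middle row and column in $T_1$ for odd $n$ are asymptotically negligible, which you correctly note is absorbed into constants since those entries touch only $O(1/n)$ of the index tuples. The paper's approach has the advantage of uniformity with Lemmas \ref{lemma:equal power_complex} and \ref{lemma: cross chain merging}, which also work directly with $M$. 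One bookkeeping caveat in your intermediate display: since $k^{2k}\cdot k!\geq k^{2k}$, you cannot land on the stated first line $n^{-k/6}k^{2k}(4/(3\alpha))^{k}$ without also absorbing $k!/k^{k}\leq 1$ into the power of $k$, i.e.\ what your argument actually produces directly is the $k^{3k}$ form; the same looseness is present in the paper's own chain of inequalities, so this is not a gap in your reasoning but a point where both arguments really prove the $k^{3k}n^{-k/6}$ bound and the first displayed line should be read as a loose intermediate.
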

\begin{proof}
This lemma evaluates the limit expectation of a single chain through intra chain merging, which is defined in Section \ref{sec: double chain expectation}. The expectation of the trace of $M^{k}$ can be expressed as
$$
\mathbb{E}[\text{Tr}(M^{k})] = \frac{1}{n^{ k/2 }}\sum_{i_{1},i_{2},\dots,i_{k}=1}^{n}\mathbb{E}[x_{i_{1}i_{2}}x_{i_{2}i_{3}} \dots x_{i_{(k-1)}i_{k}}x_{i_{k}i_{1}}].
$$

To evaluate $\mathbb{E}[x_{i_{1}i_{2}}x_{i_{2}i_{3}} \dots x_{i_{(k-1)}i_{k}}x_{i_{k}i_{1}}]$, we need to determine the number of ways in which $x_{i_{1}i_{2}}x_{i_{2}i_{3}} \dots x_{i_{(k-1)}i_{k}}x_{i_{k}i_{1}}$ simplifies to a product where each $x_{ij}$ may appear with some multiplicity. However, given that $\mathbb{E}[x_{ij}]=0 = \mathbb{E}[x_{ij}^{2}]$ according to Condition \ref{cond:matrixcond}, we should focus on the cases where more than two random variables are equal. Since the product has $k$ random variables, essentially the the task boils down to considering the integer partitions of $k$, where group sizes in each partition is at least $3$. For example, $22=10+5+4+3$ is an integer partition of $22$ having four groups, where group sizes are $10,5,4,3.$

Suppose in an integer partition of $k$, we have $g$ groups, where each group contains $k_1, k_2, \dots, k_g$ random variables respectively, and the random variables within any particular group are equal. For example, a group of size $k_i$ consists of $k_i$ equal random variables, and no two groups share the same random variables. Additionally, the cardinality of each group is at least 3, and $\sum_{l=1}^{g} k_l = k$. 

Let us analyze a generic group of size $k_{l}.$ To reduce notional complexity, here we denote the group as $x_{\alpha_{1}}x_{\alpha_{2}}\ldots x_{\alpha_{k_{l}}}$, where each $\alpha_{p}$ is of the form $(i_{\alpha_{p}}, i_{\alpha_{p}+1}).$ As per the construction, $x_{\alpha_{1}} = x_{\alpha_{2}} = \cdots = x_{\alpha_{k_{l}}}.$ Therefore using the fact that $x_{ij}$s satisfy Poincar\'e inequality, we have $$\mathbb{E}[x_{\alpha_{1}}x_{\alpha_{2}}\ldots x_{\alpha_{k_{l}}}]= \mathbb{E}[|x_{11}|^{k_{l}}]\leq k_{l}!\left(\frac{2}{\alpha}\right)^{k_{l}/2}.$$

Hence we conclude that if we fix a partition $\sum_{l=1}^{g}k_{l} = k$, and fix random variables in each group of the partition, then
\begin{align*}
    \mathbb{E}[x_{i_{1}i_{2}}x_{i_{2}i_{3}} \dots x_{i_{(k-1)}i_{k}}x_{i_{k}i_{1}}] &\leq  \prod_{l=1}^{g}k_{l}!\left(\frac{2}{\alpha}\right)^{k_{l}/2}\\
    &=\left(\frac{2}{\alpha}\right)^{k}\prod_{l=1}^{g}k_{l}!\\
    &\leq \left(\frac{2}{\alpha}\right)^{k}k!,
\end{align*}
where the last inequality follows from the fact that $m!n!\leq (m+n)!.$
However, if we fix a partition of $k$, but do not fix the random variables in the partition groups, then we have some choices for the indices $\alpha_{1}, \alpha_{2}, \ldots, \alpha_{k_{l}}$ in the product $x_{\alpha_{1}}x_{\alpha_{2}}\ldots x_{\alpha_{k_{l}}}$, which is a group of size $k_{l}$. Now since all the random variables in this group are equal, we only have the freedom to choose $\alpha_{1}.$ Once $\alpha_{1}$ is fixed, the remaining $\alpha_{p}$s in the same group can either be $\alpha_{1}$ or $n+1 - \alpha_{1}$. Hence for a given $\alpha_{1}$, we have $2^{k_{l}-1}$ many possibilities of the indices $\alpha_{2}, \alpha_{3}, \ldots, \alpha_{k_{l}}$ for that group.

Now the index $\alpha_{1}=(i_{\alpha_{1}}, i_{\alpha_{1}+1})$ consists of two indices. However, since this group $x_{\alpha_{1}}x_{\alpha_{2}}\ldots x_{\alpha_{k_{l}}}$ is a part of the full chain $x_{i_{1}i_{2}}x_{i_{2}i_{3}} \dots x_{i_{(k-1)}i_{k}}x_{i_{k}i_{1}}$, one of the indices among $i_{\alpha_{1}}, i_{\alpha_{1}+1}$ must also appear in some other group. As a result, either $i_{\alpha_{1}}$ or $i_{\alpha_{1}+1}$ must be determined by some other group. Therefore we have the freedom to choose only one of the indices $i_{\alpha_{1}}, i_{\alpha_{1}+1}$, which can be done in at most $n$ possible ways. Hence we have an additional factor $n2^{k_{l}-1}$ in the $l$th group of the partition. And overall, we have a factor of $n^{g}2^{\sum_{l=1}^{g}(k_{l}-1)}=n^{g}2^{k-g}.$

Finally, combining all the above arguments we have
\begin{align*}
    \mathbb{E}[\text{Tr}(M^{k})] &= \frac{1}{n^{ k/2 }}\sum_{i_{1},i_{2},\dots,i_{k}=1}^{n}\mathbb{E}[x_{i_{1}i_{2}}x_{i_{2}i_{3}} \dots x_{i_{(k-1)}i_{k}}x_{i_{k}i_{1}}]\\
    &=\frac{1}{n^{k/2}}\sum_{g=1}^{k/3}\sum_{\stackrel{k_{1}+k_{2}+\cdots+k_{g}=k}{k_{l}\geq 3}}n^g 2^{k-g}\left(\frac{2}{\alpha}\right)^{k}k!\\
    &\leq \frac{1}{n^{k/2}}\left(\frac{k}{3}\right)^{k}n^{k/3}2^{2k/3}\left(\frac{2}{\alpha}\right)^{k}k!\\
    &\leq n^{-k/6}k^{2k}\left(\frac{4}{3\alpha}\right)^{k}\\
    &\leq n^{-k/6}k^{3k},
\end{align*}
for $k>4/3\alpha$. The third last inequality follows from the fact that $n^g 2^{k-g}\leq n^{k/3}2^{2k/3}$ for $1\leq g\leq k/3$ and the number of partitions of $k$ into at most $k/3$ many groups is $(k/3)^{k}.$ This completes the proof.
\end{proof}

We shall now proceed with the calculation of the covariance between $\Tr M^{k}$, $\Tr M^{l}.$


\subsection{Double chain Expectation}\label{sec: double chain expectation}
In this section, our focus is on computing the expected value of the product of traces of matrices raised to different powers, which requires the computation of the expectation of a double chain. Specifically, we are interested in expressions of the form

$$
\sum_{\substack{ k,l=1 }}^d \frac{1}{n^{(k+l)/2}} \sum_{{i_{1},i_{2},\dots i_{k} , j_{1}, j_{2}, \dots j_{l}=1}}^{n} \mathbb{E}\left[(x_{i_{1}i_{2}}x_{i_{2}i_{3}}\dots x_{i_{k}i_{1}})(\bar x_{j_{1}j_{2}}\bar x_{j_{2}j_{3}}\dots \bar x_{j_{l}j_{1}})\right].
$$
Here, $k$ and $l$ are positive integers denoting the lengths of the respective chains. From the condition \ref{cond:matrixcond}, we know that $\mathbb{E}[x_{ij}]=0=\mathbb{E}[x_{ij}^{2}].$ Therefore as explained in Lemma \ref{lemma: one_chain_complex}, we need to choose the indices of the chains in such such a way that each random variable occurs with a multiplicity. However, it is quite evident that if we make more random variables equal to each other within a chain or across different chains, then we lose more degrees of freedom in the index chains. Therefore, we aim to make fewer random variables equal, preferably $2$, across two different chains. Since in this case, we have the term $\mathbb{E}[|x_{ij}|^2]$, which is $1$. Thus, when we have a product of two chains such as $(x_{i_{1}i_{2}}x_{i_{2}i_{3}}\dots x_{i_{k}i_{1}})(\bar x_{j_{1}j_{2}}\bar x_{j_{2}j_{3}}\dots \bar x_{j_{l}j_{1}})$, we can pair random variables from the first chain $x_{i_{1}i_{2}}x_{i_{2}i_{3}}\dots x_{i_{k}i_{1}}$ with the second chain $\bar x_{j_{1}j_{2}}\bar x_{j_{2}j_{3}}\dots \bar x_{j_{l}j_{1}}$ in the following three different ways:
\begin{enumerate}
    \item All the random variables of one chain find their pair within the chain itself - \textit{intra chain merging}.
    \item All random variables in one chain find their pair in the other chain - \textit{cross chain merging}.
    \item Some of the random variables of one chain find their pairs in the chain itself, and others find in the other chain - \textit{partial chain merging}.
\end{enumerate}
Let us explain each case as follows.

\textbf{Case 1 (Intra chain merging):} As we have seen in the Lemma \ref{lemma: one_chain_complex}, the single chain expectation is asymptotically zero. Therefore, intra chain merging gives us zero asymptotic contribution.


\textbf{Case 2 (Cross chain merging):}\label{case 3: cross chain merging}
Let us consider a simple example to evaluate $\mathbb{E}[\Tr(M^{k})\Tr(\bar{M}^{l})]$ with $k=l=5$.
Thus,
$$
\mathbb{E}[\Tr(M^{5})\Tr(\bar{M}^{5})] = \frac{1}{n^{5}}\sum_{\substack{i_{1},i_{2},\dots i_{k},\\  j_{1}, j_{2}, \dots j_{5}=1}}^{n} \mathbb{E}\left[(x_{i_{1}i_{2}}x_{i_{2}i_{3}}x_{i_{3}i_{4}} x_{i_{4}i_{5}}x_{i_{5}i_{1}})(\bar{x}_{j_{1}j_{2}}\bar{x}_{j_{2}j_{3}}\bar{x}_{j_{3}j_{4}}\bar{x}_{j_{4}j_{5}} \bar{x}_{j_{5}j_{1}})\right].
$$
As we shall see in the proof of Lemma \ref{lemma:equal power_complex}, we get maximum contribution if every random variable from the first chain gets paired with a random variable from the second chain.

One way of cross chain merging is illustrated in Figure \ref{fig:k=5,l=5,Crossing-case_complex}, and we demonstrate that it gives us a nonzero asymptotic contribution.

\begin{figure}[h]
    \begin{center}
\begin{tikzpicture}

   \def\x{0}
   \def\y{0}
  \filldraw (\x, \y) circle (0.06);
   \node[anchor = south] at (\x, \y) {$x_{i_1i_2}$};
  \filldraw (\x+1, \y) circle (0.06);
   \node[anchor = south] at (\x+1, \y) {$x_{i_2i_3}$};
  \filldraw (\x+2, \y) circle (0.06);
   \node[anchor = south] at (\x+2, \y) {$x_{i_3i_4}$};
  \filldraw (\x+3, \y) circle (0.06);
   \node[anchor = south] at (\x+3, \y) {$x_{i_4i_5}$};
  \filldraw (\x+4, \y) circle (0.06);
   \node[anchor = south] at (\x+4, \y) {$x_{i_5i_1}$};
  \filldraw (\x+5, \y) circle (0.06);
   \node[anchor = south] at (\x+5, \y) {$\bar{x}_{j_1j_2}$};
  \filldraw (\x+6, \y) circle (0.06);
   \node[anchor = south] at (\x+6, \y) {$\bar{x}_{j_2j_3}$};
  \filldraw (\x+7, \y) circle (0.06);
   \node[anchor = south] at (\x+7, \y) {$\bar{x}_{j_3j_4}$};
  \filldraw (\x+8, \y) circle (0.06);
  \node[anchor = south] at (\x+8, \y) {$\bar{x}_{j_4j_5}$};
   \filldraw (\x+9, \y) circle (0.06);
  \node[anchor = south] at (\x+9, \y) {$\bar{x}_{j_5j_1}$};
 \draw[thick] (\x, \y) -- (\x, \y-0.5) -- (5+\x, \y-0.5) -- (\x+5, \y); 
 \draw[thick] (\x+1, \y) -- (\x+1, \y-0.7) -- (6+\x, \y-0.7) -- (\x+6, \y);
 \draw[thick] (\x+2, \y) -- (\x+2, \y-0.9) -- (7+\x, \y-0.9) -- (\x+7, \y);
  \draw[thick] (\x+3, \y) -- (\x+3, \y-1.1) -- (8+\x, \y-1.1) -- (\x+8, \y);
  \draw[thick] (\x+4, \y) -- (\x+4, \y-1.3) -- (9+\x, \y-1.3) -- (\x+9, \y);
  \end{tikzpicture}
  \end{center}
  \caption{Sequential cross chain merging}
    \label{fig:k=5,l=5,Crossing-case_complex}
\end{figure}
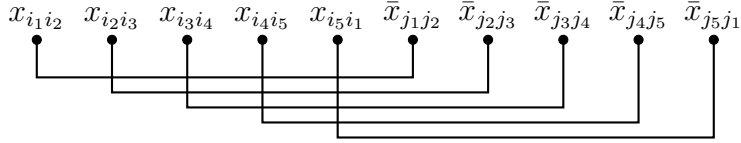

For any pair of random variables $(x_{ij}, \bar{x}_{kl})$, where $x_{ij}$ and $\bar{x}_{kl}$ are from a chain $x_{i_{1}i_{2}}x_{i_{2}i_{3}}x_{i_{3}i_{4}}$
$ x_{i_{4}i_{5}}x_{i_{5}i_{1}}$, and $\bar{x}_{j_{1}j_{2}}\bar{x}_{j_{2}j_{3}}\bar{x}_{j_{3}j_{4}}\bar{x}_{j_{4}j_{5}} \bar{x}_{j_{5}j_{1}}$ respectively, $\mathbb{E}[x_{ij}\bar{x}_{kl}]\neq 0$ if and only if $(i, j) = (k, l)$ or $(i, j) = (n+1 - k, n+1 - l)$. This leads to the following possible pairings in the above example.
  
$$
\begin{array}{l}
\left\{\begin{array} { l } 
{ j _ { 1 } = n + 1 - i _ { 1 } } \\
{ j _ { 2 } = n + 1 - i _ { 2 } }
\end{array} \quad \text { or } \quad \left\{\begin{array}{l}
j_1=i_1 \\
j_2=i_2
\end{array}\right.\right. \\\\

\left\{\begin{array} { l } 
{ j _ { 2 } = n + 1 - i _ { 2 } } \\
{ j _ { 3 } = n + 1 - i _ { 3 } }
\end{array} \quad \text { or } \quad \left\{\begin{array}{l}
j_2=i_2 \\
j_3=i_3
\end{array}\right.\right. \\\\

\left\{\begin{array} { l } 
{ j _ { 3 } = n + 1 - i _ { 3 } } \\
{ j _ { 4 } = n + 1 - i _ { 4 } }
\end{array} \quad \text { or } \quad \left\{\begin{array}{l}
j_3=i_3 \\
j_4=i_4
\end{array}\right.\right. \\\\

\left\{\begin{array} { l } 
{ j _ { 4 } = n + 1 - i _ { 4 } } \\
{ j _ { 5 } = n + 1 - i _ { 5 } }
\end{array} \quad \text { or } \quad \left\{\begin{array}{l}
j_4=i_4 \\
j_{5}=i_5
\end{array}\right.\right. \\\\

\left\{\begin{array} { l } 
{ j _ { 5 } = n + 1 - i _ { 5 } } \\
{ j _ { 1 } = n + 1 - i _ { 1 } }
\end{array} \quad \text { or } \quad \left\{\begin{array}{l}
j_{5}=i_5 \\
j_{1}=i_1
\end{array}\right.\right. \\
\end{array}
$$

If we start with $j_1 = n+1 - i_{1}$, then the chain has to follow the constraints listed in the left column; not the ones which are listed on the right column. Similarly, if we start with the condition $j_{1} = i_{1}$, then we have to follow the constraints that are listed on the right column. Mixing the conditions listed in the left and right column will reduce the number of free indices, leading to an asymptotically zero contribution, as the full term is divided by $n^{5}$. 

For example, if we take $(i_{1}, i_{2}) = (n+1 - j_{1}, n+1 - j_{2})$ and decide to take $(i_{2}, i_{3}) = (j_{2}, j_{3})$, then to keep the continuity of the chain, we must have $j_{2} = i_{2} = n+1 - j_{2}$ which enforces one extra constraint $j_{2} = n+1 - j_{2}$. Thus, we have lost a degree of freedom there. Now, if the indices follow the constraints listed on the first column, then there are $n^{5}$ many choices for choosing all the indices. For each such choice, the chain will reduce to a product of the terms of the form $\mathbb{E}[|x_{ij}|^{2}] (=1).$

Thus, we conclude that when we do the cross chain merging for the first and the second chain, namely $i_{1}\rightarrow i_{2}\rightarrow i_{3}\rightarrow i_{4}\rightarrow  i_{5}\to i_{1}$ and $j_{1}\rightarrow j_{2}\rightarrow j_{3}\rightarrow j_{4} \rightarrow j_{5}\to j_{1}$, one way for cross chain merging is to copy the first chain in place of the second chain, resulting in: $i_{1}\rightarrow i_{2}\rightarrow i_{3}\rightarrow i_{4}\rightarrow  i_{5}\to i_{1}\rightarrow i_{2}\rightarrow i_{3}\rightarrow i_{4}\rightarrow  i_{5}\to i_{1}$. This represents one possibility with $5$ free indices. Additionally, we can start the second chain with $i_{2}$ or $i_{3}$ or $\dots$ or $i_{5}$ like $i_{2}\rightarrow i_{3}\rightarrow \dots \rightarrow i_{5} \rightarrow i_{1}\rightarrow i_{2}$, resulting in $4$ more choices. Therefore, we have $5 n^{5}$ combinations for cross chain merging of the first and the second chain so far. Moreover, since $x_{(i,j)}=x_{(n+1-i,n+1-j)}$ , we can perform a cross chain merging by changing the second chain to $(n+1-i_{1})\rightarrow (n+1-i_{2})\rightarrow \dots \rightarrow (n+1-i_{5})\rightarrow (n+1-i_{1})$, which will contribute another $5n^{5}.$
Therefore, the total number of combinations for cross chain merging of the first and second chain is $2 \times 5n^{5}$ from type of merging in Figure \ref{fig:k=5,l=5,Crossing-case_complex}.

So far, we have observed that if $x_{i_ti_l}$ finds its pair $\bar{x}_{j_pj_q}$, then the pairing for $x_{i_{t+1}i_{l+1}}$ is $\bar{x}_{j_{p+1}j_{q+1}}$ for all $t$ and $l$ such that $1 \leq t, l \leq 5$. In other words, the random variables from the first chain match with the random variables in the second chain in the same order as they appear in the first chain. Now, let us consider the another type of case as demonstrated in Figure \ref{fig:k=5,l=5,other_Crossing-case_complex}, where the first chain is not merging with the second chain in the same sequence. We shall argue that such mergings will give negligible contributions.

\begin{figure}[h]
    \begin{center}
\begin{tikzpicture}

   \def\x{0}
   \def\y{0}
  \filldraw (\x, \y) circle (0.06);
   \node[anchor = south] at (\x, \y) {$x_{i_1i_2}$};
  \filldraw (\x+1, \y) circle (0.06);
   \node[anchor = south] at (\x+1, \y) {$x_{i_2i_3}$};
  \filldraw (\x+2, \y) circle (0.06);
   \node[anchor = south] at (\x+2, \y) {$x_{i_3i_4}$};
  \filldraw (\x+3, \y) circle (0.06);
   \node[anchor = south] at (\x+3, \y) {$x_{i_4i_5}$};
  \filldraw (\x+4, \y) circle (0.06);
   \node[anchor = south] at (\x+4, \y) {$x_{i_5i_1}$};
  \filldraw (\x+5, \y) circle (0.06);
   \node[anchor = south] at (\x+5, \y) {$\bar{x}_{j_1j_2}$};
  \filldraw (\x+6, \y) circle (0.06);
   \node[anchor = south] at (\x+6, \y) {$\bar{x}_{j_2j_3}$};
  \filldraw (\x+7, \y) circle (0.06);
   \node[anchor = south] at (\x+7, \y) {$\bar{x}_{j_3j_4}$};
  \filldraw (\x+8, \y) circle (0.06);
  \node[anchor = south] at (\x+8, \y) {$\bar{x}_{j_4j_5}$};
   \filldraw (\x+9, \y) circle (0.06);
  \node[anchor = south] at (\x+9, \y) {$\bar{x}_{j_5j_1}$};
 \draw[thick] (\x, \y) -- (\x, \y-0.5) -- (7+\x, \y-0.5) -- (\x+7, \y); 
 \draw[thick] (\x+1, \y) -- (\x+1, \y-0.7) -- (9+\x, \y-0.7) -- (\x+9, \y);
 \draw[thick] (\x+2, \y) -- (\x+2, \y-0.9) -- (5+\x, \y-0.9) -- (\x+5, \y);
  \draw[thick] (\x+3, \y) -- (\x+3, \y-1.1) -- (8+\x, \y-1.1) -- (\x+8, \y);
  \draw[thick] (\x+4, \y) -- (\x+4, \y-1.3) -- (6+\x, \y-1.3) -- (\x+6, \y);
  \end{tikzpicture}
  \end{center}
  \caption{Non sequential cross chain merging}
    \label{fig:k=5,l=5,other_Crossing-case_complex}
\end{figure}
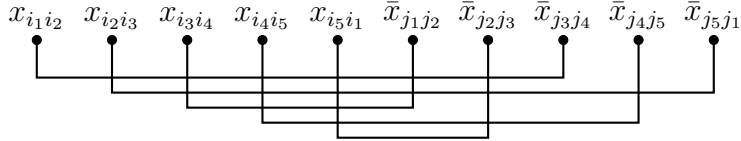
For this case, we need to impose more constraints that will reduce the degree of freedom and leads to zero asymptotic contribution. Here, 
$x_{i_{1}i_{2}}=\bar{x}_{j_{3}j_{4}}$ if and only if $(i_1,i_{2}) = (j_{3}, j_4)$ or the dual case $(i_2,i_{3}) = ((n+1)-j_{3}, (n+1)-j_4)$.

If $j_{3}=i_1$, this implies $j_{4} = i_2$. Furthermore, since we are pairing $x_{i_{2}i_{3}}$ and $\bar{x}_{j_{5}j_{1}}$, we shall have $j_{5} = i_{2}$ and $i_3=j_1$, or the dual case. Now, look at the consecutive random variables of those already paired up. We observe that to maintain the continuity of the chain, we must impose restrictions that will significantly reduce the number of free variables indices and lead to zero asymptotic contribution.

Therefore, the total number of combinations that provide a nonzero asymptotic contribution is $2 \times 5n^{5}$. We obtain that contribution from the cases as demonstrated in the Figure \ref{fig:k=5,l=5,Crossing-case_complex}.
Consequently,
$$
\lim_{{n \to \infty}} \mathbb{E}[\Tr(M^{5})\Tr(\bar{M}^{5})]
 =\lim_{{n \to \infty}} \frac{2\times 5 n^5+o(n^{5})}{n^{5}}= 2\times 5.
$$

Now, we generalize the above result in the following lemma.
\begin{lemma}{\label{lemma:equal power_complex}}
Let $M$ the random matrix as defined in Definition \ref{centrodefn} satisfying the Condition \ref{cond:matrixcond}. Then we have 
    \begin{align}
      \mathbb{E}[\Tr(M^{k})\Tr(\bar{M}^{k})]
       &=2k+o(1),\notag
    \end{align}
    if $k<n^{\gamma}$ for $\gamma<1/20.$
    \end{lemma}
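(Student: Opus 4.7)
The approach is to expand
\[
\mathbb{E}[\Tr(M^k)\Tr(\bar M^k)] = n^{-k}\sum_{i_1,\dots,i_k,\, j_1,\dots,j_k=1}^{n}\mathbb{E}\Bigl[\prod_{s=1}^{k}x_{i_s i_{s+1}}\prod_{s=1}^{k}\bar x_{j_s j_{s+1}}\Bigr]
\]
(indices cyclic mod $k$) and to classify the nonzero contributions according to how the $2k$ random variables fall into equal groups. Under Condition \ref{cond:matrixcond}, $\mathbb{E}[x_{ij}]=\mathbb{E}[x_{ij}^2]=0$ and $\mathbb{E}[|x_{ij}|^{2}]=1$, so every equivalence class of matching entries must either consist of a single unconjugated--conjugated pair or contain at least three variables; and two entries are equal exactly when their index pairs agree, or agree after the centrosymmetric duality $(a,b)\mapsto(n+1-a,n+1-b)$.

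The leading contribution comes from cross chain merging in which every unconjugated variable pairs with exactly one conjugated variable. I would first isolate the \emph{sequential} pairings: for some cyclic shift $r\in\{0,\dots,k-1\}$ and some $\epsilon\in\{0,1\}$, one has $(j_s,j_{s+1})=(i_{s+r},i_{s+r+1})$ when $\epsilon=0$ and $(j_s,j_{s+1})=(n+1-i_{s+r},n+1-i_{s+r+1})$ when $\epsilon=1$, exactly as in the $k=5$ calculation above. Each such pairing reduces the double chain to a product of $k$ factors of $\mathbb{E}[|x|^2]=1$ with $k$ truly free indices running over $\{1,\dots,n\}$, contributing $n^k/n^k=1$. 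There are $k$ shifts and $2$ centrosymmetric choices, for a total of $2k$.

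The bulk of the work is to show that every other pairing contributes $o(1)$, which I would organize into three families. First, for pure intra chain merging the two chains separate and self-pair, so by the same integer-partition argument as in Lemma \ref{lemma: one_chain_complex}, the product of the two chains contributes at most $n^{-k/3}k^{6k}$, which is $o(1)$ once $k<n^{\gamma}$ with $\gamma<1/18$. Second, for non-sequential complete cross chain pairings, the combinatorial mismatch between the two cyclic chains (as illustrated in Figure \ref{fig:k=5,l=5,other_Crossing-case_complex}) forces at least one additional index identification, so the degree of freedom drops to $k-1$ and each such pairing contributes $O(n^{-1})$; the number of such pairings is bounded by $(2k)!$, absorbed by the $n^{-1}$ factor when $\gamma$ is small. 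Third, for partial chain merging, some variables pair within a chain (forcing triples) and the rest across chains; here I would view the indices as vertices of a multigraph whose edges encode the equalities forced by the pairing, and use that each intra-chain group of size $m\geq 3$ reduces the number of free indices it touches by at least $m-1$ while each cross-chain pair is a single identification, so that a careful count shows the total degree of freedom is strictly less than $k$, giving $O(n^{-1})$ per pairing.

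The main technical obstacle is this third family: setting up the degree-of-freedom inequality that accounts correctly for the centrosymmetric duality $(a,b)\sim(n+1-a,n+1-b)$, which can occasionally restore a lost degree when a chain revisits a centrosymmetric fixed point, and for the interaction between intra- and cross-chain constraints sharing the same index. Once that inequality is in place, the whole error is bounded by a polynomial in $k$ times $n^{-1}+n^{-k/3}$, which is $o(1)$ under $k<n^{\gamma}$ with $\gamma<1/20$; combining with the $2k$ sequential cross-chain pairings yields $\mathbb{E}[\Tr(M^k)\Tr(\bar M^k)]=2k+o(1)$, as claimed. The choice $\gamma<1/20$ is essentially what is needed to absorb all the $k^{O(k)}$ combinatorial prefactors by the $n^{-1}$ and $n^{-k/3}$ gains simultaneously.
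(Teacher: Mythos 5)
Your overall strategy matches the paper's: expand the double trace, isolate the $2k$ sequential cross-chain mergings (one factor of $k$ for the cyclic shift, one factor of $2$ for the centrosymmetric dual) as the leading $2kn^k/n^k$ term, and argue that every other pairing pattern is negligible under the constraint $k<n^\gamma$, $\gamma<1/20$. Your three-family classification (pure intra-chain, non-sequential complete cross-chain, partial) is a finer-grained version of the same taxonomy the paper uses.

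However, you leave the partial-merging case as an acknowledged ``main technical obstacle'' rather than actually bounding it, and this is precisely where the paper does its work. The paper does not develop a multigraph degree-of-freedom inequality; instead it parametrizes by the number $p$ of cross-matched entries and observes that the remaining $k-p$ entries in each chain form sub-chains to which Lemma~\ref{lemma: one_chain_complex} already applies, each contributing at most $n^{(k-p-1)/3}(k-p)^{3(k-p)}$ (the $-1$ accounting for the one junction index tied to the matched part). Multiplying the $2pn^p$ from the matched part by these two factors, then by $k^2$ for the choice of starting points, and summing over $1\le p\le k-3$ (with $p\in\{k-1,k-2\}$ excluded because $\mathbb{E}[x]=\mathbb{E}[x^2]=0$) gives the explicit bound
\begin{align*}
n^k\sum_{p=1}^{k-3}2p\,k^2\,n^{-(k-p+2)/3}(k-p)^{6(k-p)},
\end{align*}
which is $o(n^k)$ once $k<n^\gamma$, $\gamma<1/20$. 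This simultaneously covers your second and third families (non-sequential complete cross-chain matchings fall into $p<k$ once the junction constraints are accounted for). So the gap is not a wrong idea but a missing step: rather than inventing a new degree-of-freedom inequality that tracks the centrosymmetric duality globally, you should reuse the already-proved single-chain estimate on the unmatched residue, exactly as the paper does.
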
  
\begin{proof}The lemma estimates the limiting expectation of  cross chain merging. Consider
\begin{align*}
\mathbb{E}[\Tr(M^{k})\Tr(\bar{M}^{k})] &= \frac{1}{n^{k}}\mathbb{E}\left[\sum_{{i_{1},i_{2},\dots i_{k} \atop j_{1}, j_{2}, \dots j_{k} = 1}}^{n}(x_{i_{1}i_{2}}x_{i_{2}i_{3}}\dots x_{i_{k}i_{1}})(\bar{x}_{j_{1}j_{2}}\bar{x}_{j_{2}j_{3}}\dots \bar{x}_{j_{k}j_{1}})\right]\\
&= \frac{1}{n^{k}} \sum_{{i_{1},i_{2},\dots i_{k} \atop j_{1}, j_{2}, \dots j_{k}= 1}}^{n}\mathbb{E}\left[(x_{i_{1}i_{2}}x_{i_{2}i_{3}}\dots x_{i_{k}i_{1}})(\bar{x}_{j_{1}j_{2}}\bar{x}_{j_{2}j_{3}}\dots \bar{x}_{j_{k}j_{1}})\right].
\end{align*}

The main contribution will come from two possible cases, such as (a) the chain  $i_{1}\rightarrow i_{2}\rightarrow \dots \rightarrow i_{k}\to i_{1}$ overlaps completely with the chain $j_{1}\rightarrow j_{2}\rightarrow \dots \rightarrow j_{k}\to j_{1}$, or (b) it overlaps with the dual version of the second chain, which is $(n+1-j_{1})\rightarrow (n+1-j_{2})\rightarrow \dots \rightarrow (n+1-j_{k})\rightarrow (n+1-j_{1}).$ In either cases, we have $k$ many possible ways to start the second chain. Once the chains overlap, it reduces to the product of terms of the form $\mathbb{E}[|x_{ij}|^{2}]$ $(=1)$. Such complete overlapping will contribute $2kn^{k}.$ 

Now we argue that if the chains do not overlap completely, then the contribution is negligible. Let us assume that only $p$ random variables from the the first chain overlap with $p$ random variables in the second chain. In that case, the overlapping part contributes $2pn^{p}$. Additionally, the extra $k-p$ random variables from the first and the second chain may contribute at most $n^{(k-p-1)/3}(k-p)^{3(k-p)}$ each. This a consequence of the result stated in the Lemma \ref{lemma: one_chain_complex}. Note that in the Lemma \ref{lemma: one_chain_complex}, we had the factor $n^{-k/6}$, where $k$ was the length of the chain, and the trace sum was already divided by $n^{k/2}$. Here length of the chain is $k-p$, but it is not divided by $n^{(k-p)/2}$ yet. Consequently, we are supposed to get $n^{(k-p)/3}$ instead of $n^{-(k-p)/6}.$ However, to keep the continuity of the full chain (e.g. $x_{i_{1}i_{2}}x_{i_{2}i_{3}}\dots x_{i_{k}i_{1}}$), one of the index among $k-p$ variables is already determined by the remaining $p$ variables which are matched with $p$ variables of the another chain (e.g. $x_{j_{1}j_{2}}x_{j_{2}j_{3}}\dots x_{j_{k}j_{1}}$). As a result, we have the factor $n^{(k-p-1)/3}.$ Hence in such a case, the combined contribution turns out to be at most 
\begin{align}\label{eqn: negligible terms in cross chain merging (part 1)}
    2pn^{p}n^{2(k-p-1)/3}(k-p)^{6(k-p)}&=n^{k} \times 2pn^{-(k-p-2)/3}(k-p)^{6(k-p)}.
\end{align}
Additionally, the starting points of the $p$-chains can be set to one of indices among $i_{1}, i_{2}, \ldots, i_{k}$ (for the first chain) or $j_{1}, j_{2}, \ldots, j_{k}$ (for the second chain). Therefore multiplying by a factor of $k^{2}$ and summing up over all possible values of $p$, we see that if the full chains do not overlap, then we have the following contribution
\begin{align}\label{eqn: negligible terms in cross chain merging}
    n^{k}\sum_{p=1}^{k-3}2pk^{2}n^{-(k-p-2)/3}(k-p)^{6(k-p)}.
\end{align}
Note that $p$ can not be $k-1$ or $k-2$. Because in that case, the remaining chain would be of length one or two, and from the Condition \ref{cond:matrixcond}, we have $\mathbb{E}[x_{ij}] = 0 = \mathbb{E}[x_{ij}^{2}].$ The above sum is $o(n^{k})$ if $k<n^{\gamma}$ for $\gamma< 1/20.$
Therefore, we have

\begin{align*}
    &\mathbb{E}[\Tr(M^{k})\Tr(\bar{M}^{k})]\\
    &=\frac{1}{n^k}\sum_{\substack{i_{1},i_{2},\dots i_{k},\\  j_{1}, j_{2}, \dots j_{k}=1}}^{n} \mathbb{E}\left[(x_{i_{1}i_{2}}x_{i_{2}i_{3}}\dots x_{i_{k}i_{1}})(\bar{x}_{j_{1}j_{2}}\bar{x}_{j_{2}j_{3}}\dots \bar{x}_{j_{k}j_{1}})\right]\\
    &=\frac{2kn^{k}}{n^{k}}+o(1)\\
    &=2k+o(1).
\end{align*}
\end{proof}


\textbf{Case 3 (Partial chain merging):}\label{case 2: partial chain merging} If the chain lengths are not equal, then we have a partial chain merging, which is explained in the following lemma.

\begin{lemma}\label{lemma: cross chain merging}
    Let $M$ be the random matrix as defined in Definition \ref{centrodefn}, and satisfying the Condition \ref{cond:matrixcond}. Then for $k\neq l$, we have
    \begin{align*}
        \mathbb{E}[\Tr(M^{k})\Tr(\bar{M}^{l})] \leq 2kln^{-|l-k|/6}|l-k|^{3|l-k|} + o(1),
    \end{align*}
    if $k,l<n^{\gamma}$ for $\gamma<1/20.$
\end{lemma}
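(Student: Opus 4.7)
Without loss of generality take $k < l$ and set $\Delta = l - k > 0$. The plan is to adapt the partial chain merging analysis developed in Lemma \ref{lemma:equal power_complex}. Expanding
\begin{align*}
    \mathbb{E}[\Tr(M^k)\Tr(\bar M^l)] = \frac{1}{n^{(k+l)/2}} \sum_{\substack{i_1, \ldots, i_k \\ j_1, \ldots, j_l = 1}}^{n} \mathbb{E}[(x_{i_1 i_2} \cdots x_{i_k i_1})(\bar x_{j_1 j_2} \cdots \bar x_{j_l j_1})]
\end{align*}
and using $\mathbb{E}[x_{ij}] = \mathbb{E}[x_{ij}^2] = 0$ from Condition \ref{cond:matrixcond}, only pairings in which every random variable appears at least twice contribute. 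I classify such pairings by the number $p \in \{1, \ldots, k\}$ of cross chain identifications between the $k$-chain and the $l$-chain; purely intra chain contributions on a single side are already controlled by Lemma \ref{lemma: one_chain_complex}.

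The dominant contribution comes from $p = k$, where the entire first chain pairs up with a consecutive block of $k$ random variables from the second chain, in either the direct or the dual ordering imposed by the centrosymmetry constraint $x_{ij} = x_{n+1-i, n+1-j}$. Mimicking the counting of Lemma \ref{lemma:equal power_complex}, this yields an overall combinatorial factor of at most $2kl$ (for the choice of alignment and orientation) together with $n^k$ free indices from the paired block. The remaining $\Delta = l - k$ random variables of the longer chain then form a closed sub-chain whose boundary index is already determined by the paired block, so by the same degree-of-freedom accounting as in Lemma \ref{lemma: one_chain_complex} their contribution is bounded by $n^{(\Delta - 1)/3} \Delta^{3\Delta}$. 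Computing the exponent $k + (\Delta - 1)/3 - (k + l)/2 = -\Delta/6 - 1/3$ yields
\begin{align*}
    \frac{2kl \cdot n^k \cdot n^{(\Delta - 1)/3} \Delta^{3\Delta}}{n^{(k+l)/2}} \leq 2kl \, n^{-\Delta/6} \Delta^{3\Delta},
\end{align*}
which is the advertised principal bound.

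For the remaining $p \in \{1, \ldots, k - 3\}$ (the values $p = k - 2, k - 1$ vanish because a leftover sub-chain of length $1$ or $2$ has zero expectation under Condition \ref{cond:matrixcond}), the contribution for each $p$ is bounded, following \eqref{eqn: negligible terms in cross chain merging} in the proof of Lemma \ref{lemma:equal power_complex}, by
\begin{align*}
    \frac{2p \cdot kl \cdot n^p \cdot n^{(k-p-1)/3}(k-p)^{3(k-p)} \cdot n^{(l-p-1)/3}(l-p)^{3(l-p)}}{n^{(k+l)/2}},
\end{align*}
whose exponent of $n$ equals $(2p - k - l - 4)/6$. Under the hypothesis $k, l < n^{\gamma}$ with $\gamma < 1/20$, the combinatorial factors $(k - p)^{3(k-p)}(l - p)^{3(l-p)}$ are absorbed by this negative power of $n$, via exactly the same $1/20$-versus-$1/18$ balance used at the end of Lemma \ref{lemma:equal power_complex}, and summing over $p$ gives $o(1)$.

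The main obstacle is the precise bookkeeping of degrees of freedom in the $p = k$ case, where the single leftover sub-chain has both boundary indices constrained by the paired block rather than just one as in the equal-power setting: treating it as a closed chain of length $\Delta$ with one effectively free boundary, in the same spirit as Lemma \ref{lemma: one_chain_complex}, is what produces the $n^{(\Delta-1)/3}$ factor that combines with $n^k / n^{(k+l)/2} = n^{-\Delta/2}$ to give the final $n^{-\Delta/6 - 1/3}$. Once this accounting mirrors the equal-power analysis, the remainder is a direct exponent comparison of the type already carried out in Lemma \ref{lemma:equal power_complex}.
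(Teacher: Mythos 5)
Your proposal is correct and follows essentially the same route as the paper's own proof: after assuming $k<l$, you classify by the length $p$ of the cross-chain overlap, reuse the equal-power estimates \eqref{eqn: negligible terms in cross chain merging (part 1)}--\eqref{eqn: negligible terms in cross chain merging} for $p\le k-3$, isolate the $p=k$ term as the dominant one yielding $2kl\,n^{-\Delta/6}\Delta^{3\Delta}$, and absorb the rest into $o(1)$ under $k,l<n^{\gamma}$ with $\gamma<1/20$. The only difference is that you spell out the degree-of-freedom accounting (the $n^{(\Delta-1)/3}$ factor for the leftover sub-chain and the $-\Delta/6-1/3$ exponent) more explicitly than the paper, which simply cites those bounds.
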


\begin{proof}
    Without loss of generality, let us assume that $k<l$. We adopt the methodology from Lemma \ref{lemma:equal power_complex}. Let a subchain of length $p$ from the first chain $x_{i_{1}i_{2}}x_{i_{2}i_{3}}\dots x_{i_{k}i_{1}}$ gets mathed to a subchain of length $p$ from the second chain$ x_{j_{1}j_{2}}x_{j_{2}j_{3}}\dots x_{j_{l}j_{1}}.$ In that case, following the equations \eqref{eqn: negligible terms in cross chain merging (part 1)} and \eqref{eqn: negligible terms in cross chain merging}, we have
    \begin{align*}
        \mathbb{E}[\Tr M^{k}\bar{M}^{l}]&\leq\frac{1}{n^{(k+l)/2}}\sum_{p=1}^{k-3}2pn^{p}\times kn^{(k-p-1)/3}(k-p)^{3(k-p)}\times l n^{(l-p-1)/3}(l-p)^{3(l-p)}\\
        &+\frac{1}{n^{(k+l)/2}}2kn^{k}\times ln^{(l-k-1)/3}(l-k)^{3(l-k)}\\
        &\leq n^{-(l+k)/2}\sum_{p=1}^{k}2pkln^{(k+l+p-2)/3}(k-p)^{3(k-p)}(l-p)^{3(l-p)}\\
        &+2kln^{-(l-k)/6}(l-k)^{3(l-k)}\\
        &\leq 2kln^{-(l-k)/6}(l-k)^{3(l-k)} + o(1).
    \end{align*}
    The $o(1)$ in the last inequality is a consequence of $k,l\leq n^{\gamma}, \gamma<1/20.$
\end{proof}

\begin{proof}
Here the chain lengths are not equal. Without loss of generality, let us assume that $k < l$. 
We can divide the second chain into two parts - one with the length of the first chain and one with the remaining elements, as follows;
$(x_{i_{1}i_{2}}x_{i_{2}i_{3}}\dots x_{i_{k}i_{1}})$ $(\bar{x}_{j_{1}j_{2}}\bar{x}_{j_{2}j_{3}}\dots \bar{x}_{j_{k}j_{k+1}})$ $(\bar{x}_{j_{k+1}j_{k+2}} \bar{x}_{j_{k+2}j_{k+3}}\dots \bar{x}_{j_{l}j_{1}})$. To get the maximum contribution, the first and second chains will have cross chain merging, and the third chain will have intra chain merging.

From the cross chain merging of the first and second chains, we have $2kn^k$ combinations; see the Lemma \ref{lemma:equal power_complex}. Now, we have one option for the third chain, which is intra chain merging. We have at most $n^{(l-k-1)/3}(l-k)^{3(l-k)}$ combinations from intra chain merging. Note that the random variable $j_{k+1}$ has already been fixed in the previous case where we merged the first and second chains. Additionally, we chose the second and third chains from a single chain, and this can be done in $l$ many ways. Therefore, we have at most $ln^{(l-k-1)/3}(l-k)^{3(l-k)}$ combinations. Thus, the total number of combinations is of order $O(lk(l-k)^{3(l-k)}n^{(l+2k-1)/3})$. However, when evaluating $\mathbb{E}[\Tr(M^{k})\Tr(\bar{M}^{l})]$, the expression $\mathbb{E}\left[(x_{i_{1}i_{2}}x_{i_{2}i_{3}}\dots x_{i_{k}i_{1}})(\bar x_{j_{1}j_{2}}\bar x_{j_{2}j_{3}}\dots \bar x_{j_{l}j_{1}})\right]$ is multiplied by $(1/n)^{(k+l)/2}$. Therefore, from this case we have a contribution of $O((l-k)^{3(l-k)}n^{-(l-k)/6})$.
\end{proof}

We summarise all the above cases as follows.

\begin{rem}{\label{rem: intra and partial chain merging}}
    While calculating $\mathbb{E}[\Tr(M^{k})\Tr(\bar{M}^{l})]$, the significant contribution comes only when $k = l$, see Lemma \ref{lemma:equal power_complex}. From Lemma \ref{lemma: one_chain_complex} and Lemma \ref{lemma: cross chain merging}, we conclude that if $k \neq l$, although intra-chain merging and partial-chain merging are possible strategies, asymptotically they provide zero contributions. In particular, $\mathbb{E}[\Tr(M^{k})\Tr(\bar{M}^{l})] = O((l-k)^{3(l-k)}n^{-(l-k)/6})$.

\end{rem}

Now, we are ready to summarize the above results into the proof of Proposition \ref{Prop:Var_complex}.
\begin{proof}[Proof of Proposition \ref{Prop:Var_complex}]
We may write the variance as 
\begin{align} \label{eq:Var_1_complex}
&\Var(\Tr(P_{d}(M))) \notag \\
&= \Var\left(\sum_{k=1}^{d}a_{k} \Tr(M^{k})\right) \notag \\
&= \sum_{k,l=1}^{d}  a_{k} a_{l} \Cov\left(\Tr(M^k),\Tr(M^l)\right) \notag \\
&= \sum_{k,l=1}^{d} a_{k} a_{l}\left(\mathbb{E}\left[\Tr(M^k)\Tr (\bar{M}^l)\right]-\mathbb{E}\left[\Tr(M^k)\right] \mathbb{E}\left[\Tr(\bar{M}^l)\right]\right) \notag\\
&=\sum_{\substack{k\neq l \\ k,l=1}}^d 0 
+ \sum_{\substack{k = l \\ k,l=1}}^d2ka_{k}^2+o(1)\\
&= \sum_{k=1}^d2k a_{k}^2+o(1).\notag
\end{align}

The equality in \eqref{eq:Var_1_complex} follows from the Lemmas \ref{lemma: one_chain_complex}, \ref{lemma:equal power_complex}, and Remark \ref{rem: intra and partial chain merging}.
\end{proof}

\subsection{Covariance kernel of $\Tr \hat{\mathcal{R}}_{z}(M)$}\label{sec: calculation of the covariance kernel} In this subsection, we find the covariance kernel of the Gaussian process as mentioned in Proposition \ref{prop: Gaussian process and tightness}. As defined at the beginning of Section \ref{sec: proof of the clt}, let $\rho=\max\{\rho_{1}, \rho_{2}\}$. Now, on the event $\Omega_{n}$, we can expand $\Tr \hat{\mathcal{R}}_{z}(M)$ on the boundary $\partial \mathbb{D}_{\rho+\tau}$ as follows
\begin{align*}
    \Tr \hat{\mathcal{R}}_{z}(M)=\frac{n}{z}+\sum_{k=1}^{\lfloor \log^{2} n\rfloor - 1}z^{-k-1}\Tr M^{k} + \frac{1}{z^{\lfloor \log^{2} n\rfloor +1}}\Tr[\hat{\mathcal{R}}_{z}(M)M^{\lfloor \log^{2} n \rfloor}].
\end{align*}

We see that on the event $\Omega_{n}$, the last term is bounded by $\frac{n}{\tau}(\rho/|z|)^{\lfloor \log^{2} n \rfloor} = o(1).$ Now using Remark \ref{rem: intra and partial chain merging}, we have

\begin{align*}
    \mathbb{E}[\Tr M^{k}\Tr \bar M^{l}]=O(|k-l|^{3|k-l|}n^{-|k-l|/6}).
\end{align*}
Furthermore, since $(x^3n^{-1/6})^{x}$ is a decreasing function of $x$ for $1\leq x \leq n^{1/18}/e$, we can estimate
\begin{align*}
    & \sum_{k\neq l}^{\lfloor \log^{2}n \rfloor}|k-l|^{3|k-l|}n^{-|k-l|/6}\\
    &\leq 2\log^{2}n\sum_{m=1}^{\lfloor \log^{2}n \rfloor}m^{3m}n^{-m/6}\\
    &\leq \frac{2\log^{4}n}{n^{1/6}}=o(1).
\end{align*}

As a result, $\sum_{k\neq l = 1}^{\lfloor \log^{2}n\rfloor}z^{(-k-1)}\bar{z}^{(-l-1)}\mathbb{E}[\Tr M^{k}\Tr\bar M^{l}]=o(1).$ Therefore using Lemma \ref{lemma:equal power_complex}, we obtain
\begin{align*}
    &\mathbb{E}[\Tr \hat{\mathcal{R}}_{z}^{\circ}(M)\Tr\hat{\mathcal{R}}_{\bar\eta}^{\circ}(\bar M)]\\
    &=\left\{\mathbb{E}[\Tr \hat{\mathcal{R}}_{z}(M)\Tr\hat{\mathcal{R}}_{\bar\eta}(\bar M)]-\frac{n^{2}}{z\bar \eta}\right\}+o(1)\\
    &=\sum_{k=1}^{\lfloor \log^{2}n \rfloor}(z\bar \eta)^{-k-1}\mathbb{E}[M^{k}\bar M^{k}] + o(1)\\
    &=2\sum_{k=1}^{\lfloor \log^{2}n \rfloor}k(z\bar \eta)^{-k-1}+o(1)\\
    &=2\left(\frac{1}{z\bar \eta}\right)^{2}\left(1-\frac{1}{z\bar \eta}\right)^{-2}\\
    &=2\left(1-z\bar\eta\right)^{-2}.
\end{align*}

This completes the proof of the Proposition \ref{prop: Gaussian process and tightness}.


\appendix
\renewcommand\thesection{A}
\section{}\label{sec:appendix_section}
\renewcommand{\theappendixlemma}{\Alph{section}.\arabic{appendixlemma}}

\begin{appendixlemma}{\cite[Lemma A.3]{jana2022clt}}
    Let $M$ be a random band matrix with bandwidth $b_n$ and variance profile $\omega$. Assume that the entries are i.i.d. having mean zero and variance one, and satisfy the Poincar\'e inequality with constant $\alpha.$ Then there exists $\rho\geq1$ such that 
    $$
    \mathbb{P}(\|M\|>\rho/4+t) \leq K \exp\left(-\sqrt{\frac{\alpha (2b_{n}+1)}{2\omega}}t\right),\;\;\;\; \forall \;\; t>0,
    $$
    where $K$ is a constant, which does not depend on $n.$
\end{appendixlemma}
In our case, we have a matrix with i.i.d. entries whose entries satisfy the Poincar\'e inequality. Therefore, using the above lemma we can have the following.
\begin{appendixlemma}\label{lemma:Norm bound}
    Let $T$ be a  random matrix with $m \times m$ dimension having i.i.d. entries with mean zero and variance one. Moreover, assume that the entries satisfy the Poincar\'e inequality with constant $\alpha$. Then there exists $\rho\geq1$ such that 
    $$
    \mathbb{P}(\|T/\sqrt{m}\|>\rho/4+t) \leq K \exp\left(-\sqrt{\frac{\alpha m}{2}}t\right),\;\;\;\; \forall \;\; t>0,
    $$
    where $K>0$ is a universal constant.
\end{appendixlemma}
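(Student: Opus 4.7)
The plan is to deduce Appendix Lemma A.2 as a direct specialization of Appendix Lemma A.1. A matrix $T$ of size $m \times m$ with i.i.d. entries satisfying the hypotheses of Lemma A.2 is precisely a periodic random band matrix of full bandwidth with the trivial variance profile. Concretely, I would take $b_n = (m-1)/2$ (or equivalently, choose bandwidth so that $2b_n + 1 = m$), let $X = T$, and let $\omega \equiv 1$ on $\mathbb{R}$. Then $W_{ij} = \sqrt{\omega((i-j)/(2b_n+1))} = 1$ for all $i,j$, so the Hadamard product satisfies $X \circ W = T$, and the prefactor $1/\sqrt{2b_n+1}$ becomes $1/\sqrt{m}$.

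Next I would verify that the variance profile $\omega \equiv 1$ meets the hypotheses of Lemma A.1: it is trivially piecewise continuous and $1$-periodic, it is continuous at $0$, it is nonnegative and bounded above by $1$, and it satisfies the normalization $\int_{-1/2}^{1/2} \omega(x)\,dx = 1$. The entries of $X$ are i.i.d.\ with mean zero and unit variance, and they satisfy the Poincaré inequality with the same constant $\alpha$, so the hypotheses on $X$ in Lemma A.1 are met.

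Applying Lemma A.1 with these choices, there exists $\rho \geq 1$ such that
\begin{equation*}
\mathbb{P}\!\left(\tfrac{1}{\sqrt{m}}\|T\| > \rho/4 + t\right) \leq K \exp\!\left(-\sqrt{\tfrac{\alpha(2b_n+1)}{2\omega}}\,t\right) = K \exp\!\left(-\sqrt{\tfrac{\alpha m}{2}}\,t\right)
\end{equation*}
for every $t > 0$, where $K$ is the universal constant from Lemma A.1. Since $\rho$ and $K$ depend only on $\alpha$ (and the universal structure of the proof of Lemma A.1), they do not depend on $m$. This is exactly the claimed bound.

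There is no real obstacle here beyond bookkeeping; the only point that deserves attention is making sure the constant $\rho$ produced by Lemma A.1 is indeed uniform in $m$ when specializing to $\omega \equiv 1$ (rather than implicitly depending on the bandwidth $b_n$). Since Lemma A.1 is stated with $\rho$ depending only on $\omega$ and $\alpha$, and we have fixed $\omega \equiv 1$ once and for all, this uniformity is immediate. The conclusion of Lemma A.2 therefore follows with the same $K$ and $\rho$ furnished by Lemma A.1.
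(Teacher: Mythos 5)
Your proposal is correct and matches the paper's argument: the paper proves Lemma A.2 exactly by specializing Lemma A.1 with the trivial variance profile $\omega\equiv 1$ and identifying $2b_n+1=m$, which is precisely what you did. The bookkeeping you carry out (checking $\omega\equiv 1$ meets the hypotheses, verifying the exponent becomes $\sqrt{\alpha m/2}$, noting $\rho$ and $K$ are independent of $m$) is the content the paper leaves implicit.
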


\begin{appendixlemma}{\cite[Lemma 2.2 ]{rider2006gaussian}}\label{lemma: rider silverstein bound on trace of resolvent}
    Let $M=[m_{ij}]$ be an $n\times n$ random matrix with i.i.d. complex valued random variables satisfying (i) $\mathbb{E}[m_{ij}]=0=\mathbb{E}[m_{ij}^{2}]$, (ii) $\mathbb{E}[|m_{ij}|^{k}]\leq k^{\alpha k}$ for some $\alpha > 0$ and for all $k>2$, (iii) $\Re(m_{ij})$ and $\Im(m_{ij})$ have a bounded joint density. Then for any $p\in [1, 2)$, and any $z\in\mathbb{C}$,
    \begin{align*}
        \mathbb{E}[|\Tr R_{z}(M)|^{p}]\leq C(p)n^{3p/2 + 2},
    \end{align*}
    where the constant $C(p)$ depends on $p$, but $C(p)\to\infty$ as $p\uparrow 2.$
\end{appendixlemma}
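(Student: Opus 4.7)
The plan is to reduce the resolvent trace bound to a bound on the smallest singular value and then exploit the bounded joint density hypothesis (iii) together with the moment bound (ii). The crude operator-norm estimate
\[
|\Tr \mathcal{R}_z(M)| \;\le\; n\,\|\mathcal{R}_z(M)\|_{\mathrm{op}} \;=\; \frac{n}{s_n(zI-M)},
\]
where $s_n$ is the smallest singular value, reduces the claim to showing $\mathbb{E}[s_n(zI-M)^{-p}] \le C(p)\, n^{p/2+2}$ for all $p\in[1,2)$. I would write this expectation via layer cake,
\[
\mathbb{E}[s_n(zI-M)^{-p}] \;=\; p\!\int_0^\infty u^{-p-1}\, \mathbb{P}\!\left(s_n(zI-M)\le u\right)\, du,
\]
and split the integration at $u_0 = n^{-1/2}$ (small-ball regime) and $u_1 = c(|z|+\sqrt n)$ (operator-norm scale).

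For the tail region $u\ge u_1$, I would use the moment hypothesis (ii), $\mathbb{E}|m_{ij}|^k \le k^{\alpha k}$, together with the standard $\varepsilon$-net moment estimate to get $\mathbb{P}(\|M\|_{\mathrm{op}}\ge c\sqrt n) \le K e^{-cn}$. Since $s_n(zI-M)\le |z|+\|M\|$, the tail of the $s_n$ distribution beyond $u_1$ is exponentially small and contributes $O(1)$ after integration. For the intermediate range $u\in (u_0, u_1]$, a trivial probability bound of $1$ suffices, yielding a contribution of order $u_0^{-p} = n^{p/2}$.

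The heart of the proof is the small-ball regime. I would aim to establish a bound of the form $\mathbb{P}(s_n(zI-M)\le u) \le K n^{3} u^{2}$ for $u \le u_0$. The mechanism: for any fixed unit vector $v$, write $(zI-M)v$ as $zv$ minus a linear combination of independent entries; the bounded joint density of two coordinates of $(zI-M)v$ (say the first two) yields a conditional density bound and thus $\mathbb{P}(\|(zI-M)v\|\le t\sqrt n) \le K t^2$. A union bound over a polynomial-size net on $S^{n-1}$, combined with a Lipschitz-continuity argument (using the operator-norm event from Step~2), transfers this pointwise estimate to a uniform small-ball bound for $s_n$. Substituting into the layer-cake integral gives $\int_0^{u_0} u^{-p-1}\cdot K n^3 u^2\, du = O(n^{3-(2-p)/2})$, which matches the target $n^{p/2+2}$ exponent.

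The main obstacle is step three: a bounded density on a single entry only yields an $O(u)$ decay for the small-ball probability, which fails to integrate against $u^{-p-1}$ once $p\ge 1$. One must therefore exploit at least two independent directions of entry randomness (equivalently, project $(zI-M)v$ onto a two-dimensional complex subspace before invoking the density bound) to upgrade the decay to $u^2$. The blow-up $C(p)\to\infty$ as $p\uparrow 2$ visibly reflects the borderline integrability $\int_0 u^{2-p-1}\,du = O((2-p)^{-1})$, confirming that the $u^2$-bound is exactly what the argument requires and cannot be avoided.
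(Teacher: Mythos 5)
The paper does not prove this lemma at all; it simply cites it from Rider and Silverstein \cite[Lemma 2.2]{rider2006gaussian}, so the relevant comparison is against their argument. Your reduction via $|\Tr R_z(M)|\le n/s_n(zI-M)$ is fine, and the observation that the blow-up $C(p)\to\infty$ as $p\uparrow 2$ must come from a quadratic small-ball rate is exactly right. However, the proposed mechanism for obtaining the uniform small-ball bound on $s_n$ has a fatal gap. For the Lipschitz transfer $\|Av\|\ge\|Av_0\|-\|A\|\,\|v-v_0\|$ to control $s_n$ at scale $u$, the mesh of the net must be $O(u/\|A\|)=O(u/\sqrt n)$; but a mesh-$\delta$ net of the complex unit sphere $S^{n-1}\subset\mathbb{C}^n$ has cardinality at least $(c/\delta)^{2n}$, so there is no polynomial-size net at any mesh that is $o(1)$. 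Against a per-vector estimate decaying only polynomially in $u$ (and your claimed $\mathbb{P}(\|(zI-M)v\|\le t\sqrt n)\le Kt^2$ is itself too optimistic --- projecting on one coordinate gives order $n^2t^2$ after accounting for $|v_j|\ge n^{-1/2}$), a union bound over an exponential-size net returns a trivial bound. Making a net argument work would require the full Rudelson--Vershynin compressible/incompressible machinery with a tensorized small-ball bound of order $(Cu)^{cn}$ on incompressible vectors; nothing in the outline supplies this, and it is far more than hypothesis~(iii) alone immediately gives.

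The cited argument avoids nets altogether. Writing $A=zI-M$, one uses the exact identity $\|A^{-1}\|_F^2=\sum_{i=1}^n d_i^{-2}$, where $d_i=\operatorname{dist}\bigl(a_i,\operatorname{span}\{a_j:j\ne i\}\bigr)$ and $a_i$ is the $i$-th column of $A$; this holds because the $i$-th row of $A^{-1}$ is, up to conjugation, a vector in the one-dimensional orthogonal complement of that span pairing to $1$ with $a_i$, hence has norm $d_i^{-1}$. For $p\in[1,2)$, subadditivity of $x\mapsto x^{p/2}$ gives $\|A^{-1}\|_F^{p}\le\sum_i d_i^{-p}$, so it suffices to control $\mathbb{E}[d_i^{-p}]$. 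Conditionally on the other columns, $d_i=|\langle u,a_i\rangle|$ for a fixed unit normal $u$, and $\langle u,a_i\rangle$ is an affine function of the independent entries of column $i$; choosing a coordinate $j$ with $|u_j|\ge n^{-1/2}$ and integrating out $m_{ji}$, hypothesis (iii) bounds the conditional density of $\langle u,a_i\rangle$ on $\mathbb{C}$ by $O(n)$, whence $\mathbb{P}(d_i\le t)\le Knt^2$. A layer-cake integral split at $t=n^{-1/2}$ gives $\mathbb{E}[d_i^{-p}]\le C(2-p)^{-1}n^{p/2}$, and combining with $|\Tr A^{-1}|\le n\|A^{-1}\|_F$ yields the stated polynomial bound in $n$ together with the required singularity as $p\uparrow 2$. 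This is precisely the two-real-dimensional anti-concentration you intuited, but applied to the scalar $d_i$ rather than to $\|Av\|$ over a net --- which is what makes the argument close without exponential losses.
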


We note down the essential ingredient of this article, which martingale CLT as follows.
\begin{appendixlemma}\label{lemma:MCLT}\cite[Theorem 35.12]{billingsley2017probability}
    Let $\left\{\psi_{n, k}\right\}_{1 \leq k \leq n}$ be a martingale difference sequence with respect to a filtration $\{\mathcal{F}_{k,n}\}_{1\leq k\leq n}$. Suppose for any $\gamma>0$,\\
    \textbf{(i)} \label{MCLT i} $\lim _{n \rightarrow \infty} \sum_{k=1}^n \mathbb{E}[\psi_{n, k}^2, \mathbf{1}_{|\psi_{n, k}|>\gamma}]=0$.\\
    \textbf{(ii)} \label{MCLT ii} $\sum_{k=1}^n \mathbb{E}[\psi_{n, k}^2\left|\mathcal{F}_{n,k-1}\right] \stackrel{p}{\rightarrow} \sigma^2$ as $n \rightarrow \infty$.

    Then $\sum_{k=1}^n \psi_{n, k} \stackrel{d}\rightarrow \mathcal{N}_1\left(0, \sigma^2\right)$.
\end{appendixlemma}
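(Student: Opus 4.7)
The approach is the characteristic function method, adapting Lindeberg's classical argument to the martingale setting. Write $S_n = \sum_{k=1}^n \psi_{n,k}$ and set $v_{n,k} := \mathbb{E}[\psi_{n,k}^2 \mid \mathcal{F}_{n,k-1}]$ so that $V_n^2 := \sum_{k=1}^n v_{n,k}$ is the object controlled by hypothesis (ii). By L\'evy's continuity theorem it suffices to show that for every fixed $t\in\mathbb{R}$,
\begin{equation*}
\phi_n(t) := \mathbb{E}[e^{itS_n}] \longrightarrow e^{-\sigma^2 t^2/2}.
\end{equation*}

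The first step is a truncation driven by hypothesis (i). Fix $\epsilon>0$ and replace $\psi_{n,k}$ by its centered truncation $\tilde\psi_{n,k} := \psi_{n,k}\mathbf{1}_{|\psi_{n,k}|\leq \epsilon} - \mathbb{E}[\psi_{n,k}\mathbf{1}_{|\psi_{n,k}|\leq \epsilon}\mid \mathcal{F}_{n,k-1}]$. The sequence $\{\tilde\psi_{n,k}\}$ is again a martingale difference array, is bounded by $2\epsilon$, and the Lindeberg condition together with the martingale property implies that $\sum_k\mathbb{E}|\psi_{n,k}-\tilde\psi_{n,k}|^2\to 0$; hence $\tilde S_n-S_n\to 0$ in $L^2$, and the modified conditional variances $\tilde v_{n,k}$ still satisfy $\sum_k \tilde v_{n,k}\stackrel{p}{\to}\sigma^2$. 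It therefore suffices to prove the CLT for $\tilde S_n$.

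Now expand the characteristic function term by term using $|e^{ix}-1-ix+x^2/2|\leq |x|^3/6$ and the martingale property $\mathbb{E}[\tilde\psi_{n,k}\mid\mathcal{F}_{n,k-1}]=0$, which annihilates the linear term:
\begin{equation*}
\mathbb{E}[e^{it\tilde\psi_{n,k}}\mid \mathcal{F}_{n,k-1}] = 1 - \tfrac{t^2}{2}\tilde v_{n,k} + R_{n,k}(t),\qquad |R_{n,k}(t)|\leq \tfrac{|t|^3\epsilon}{6}\tilde v_{n,k}.
\end{equation*}
Iterating the telescoping identity $\mathbb{E}[e^{it\tilde S_n}]=\mathbb{E}[e^{it\tilde S_{n-1}}\mathbb{E}[e^{it\tilde\psi_{n,n}}\mid \mathcal{F}_{n,n-1}]]$ and pulling the $\mathcal{F}_{n,k-1}$-measurable factors outside, one shows that $\phi_n(t)$ differs from $\mathbb{E}\bigl[\prod_{k=1}^n(1-\tfrac{t^2}{2}\tilde v_{n,k})\bigr]$ by at most $\tfrac{|t|^3\epsilon}{6}\mathbb{E}[\sum_k\tilde v_{n,k}]$, which tends to $\tfrac{|t|^3\epsilon\sigma^2}{6}$ and is arbitrarily small.

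Finally, using the elementary bound $\bigl|\prod_k(1-a_k)-\exp(-\sum_k a_k)\bigr|\leq \sum_k a_k^2$ valid for $a_k\in[0,1/2]$, together with $\sum_k \tilde v_{n,k}^2\leq 4\epsilon^2\sum_k\tilde v_{n,k}$, the product is within $O(\epsilon)$ of $\exp(-\tfrac{t^2}{2}\sum_k\tilde v_{n,k})$ in $L^1$. Since $\sum_k\tilde v_{n,k}\stackrel{p}{\to}\sigma^2$ by hypothesis (ii) and the exponential is bounded by $1$, dominated convergence yields $\mathbb{E}[\exp(-\tfrac{t^2}{2}\sum_k\tilde v_{n,k})]\to e^{-\sigma^2 t^2/2}$; letting $\epsilon\downarrow 0$ afterward completes the proof. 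The main obstacle is orchestrating the three limit operations in the right order (first $n\to\infty$ with $\epsilon$ fixed, then $\epsilon\downarrow 0$) so that the truncation error, the cubic remainder error, and the product-to-exponential error all genuinely vanish; this is the standard diagonalization that makes the argument work and is where care with the Lindeberg condition becomes essential.
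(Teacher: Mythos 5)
The paper does not prove this lemma; it is quoted verbatim from Billingsley (Theorem~35.12). So the comparison here is between your argument and the standard proof of that theorem, and there is a genuine gap in yours.

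The gap is in the telescoping step, where you assert that ``pulling the $\mathcal{F}_{n,k-1}$-measurable factors outside'' shows
$\bigl|\phi_n(t) - \mathbb{E}\bigl[\prod_{k=1}^n(1-\tfrac{t^2}{2}\tilde v_{n,k})\bigr]\bigr| \lesssim \epsilon\,\mathbb{E}\bigl[\sum_k\tilde v_{n,k}\bigr]$.
In the i.i.d.\ Lindeberg argument this works because the variances are deterministic, so the factors for indices $k>j$ commute with conditioning on $\mathcal{F}_{n,j-1}$. Here they do not: after conditioning on $\mathcal{F}_{n,n-1}$ to expand the last increment, you are left with $\mathbb{E}\bigl[e^{it\tilde S_{n-1}}(1-\tfrac{t^2}{2}\tilde v_{n,n})\bigr]$, and $\tilde v_{n,n}$ is $\mathcal{F}_{n,n-1}$-measurable but not $\mathcal{F}_{n,n-2}$-measurable, so it cannot be pulled past the next conditioning. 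When you try to replace $e^{it\tilde\psi_{n,j}}$ by $1-\tfrac{t^2}{2}\tilde v_{n,j}$ underneath a factor that depends on $\mathcal{F}_{n,j}$ through the later conditional variances, the error is controlled by the \emph{unconditioned} quantity $\mathbb{E}\bigl|e^{it\tilde\psi_{n,j}}-(1-\tfrac{t^2}{2}\tilde v_{n,j})\bigr|$, not by the Taylor remainder $R_{n,j}$. The leading term $it\tilde\psi_{n,j}$ is no longer annihilated, producing an error of order $\sqrt{\tilde v_{n,j}}$ per step; summing over $j$ gives up to $O(\sqrt{n})$, not $O(\epsilon)$.

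This is precisely the obstruction that makes the martingale CLT nontrivial and forces the devices that appear in the actual proofs: Billingsley's proof introduces a stopping time to truncate the running conditional variance $\sum_{k\le j}v_{n,k}$ so that the exponential compensator stays bounded; McLeish's proof instead uses the multiplicative martingale $T_n=\prod_k(1+it\psi_{n,k})$, which satisfies $\mathbb{E}[T_n]=1$ exactly by the martingale difference property, and then compares $e^{itS_n}$ with $T_n e^{-t^2V_n^2/2}$. Your sketch uses neither, so as written it does not close. The truncation step, the quadratic-remainder estimates, and the final product-to-exponential comparison are all fine and would slot into a McLeish-style or stopping-time argument; what is missing is a correct replacement for the Lindeberg swap itself.
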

\begin{appendixlemma}[Sherman-Morrison formula, \cite{sherman1950adjustment}]\label{lem: sherman morrison}
   Let $M$ and $M+ve_{k}^{t}$ be two invertible matrices, where $v\in \mathbb{C}^{n}$. Then
    \begin{align*}
    (M+v e_{k}^{t})^{-1}v=\frac{M^{-1}v}{1+e_{k}^{t}M^{-1}v}.
    \end{align*}
    \end{appendixlemma}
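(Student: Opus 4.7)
The plan is to prove this by solving the linear system $(M+ve_k^t)w=v$ directly, exploiting the fact that a rank-one perturbation of $M$ produces a correction to $M^{-1}$ supported in a one-dimensional subspace. First, setting $w:=(M+ve_k^t)^{-1}v$ and unpacking the defining equation, I would rewrite $Mw + v(e_k^t w) = v$, so that $Mw = (1 - e_k^t w)\,v$. This immediately gives $w = (1 - e_k^t w)\, M^{-1}v$, expressing $w$ as a scalar multiple of $M^{-1}v$; the remaining task is to identify that scalar.

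To pin it down, I would left-multiply by $e_k^t$ to obtain the scalar identity $e_k^t w = (1 - e_k^t w)\, e_k^t M^{-1}v$. Writing $\alpha := e_k^t M^{-1}v$ and $\beta := e_k^t w$, this reads $\beta = (1-\beta)\alpha$, which solves to $\beta = \alpha/(1+\alpha)$, hence $1-\beta = 1/(1+\alpha)$. Substituting back yields $w = \frac{M^{-1}v}{1+e_k^t M^{-1}v}$, as claimed. The only caveat is that $1+\alpha$ must be nonzero, which is automatic under the hypothesis that both $M$ and $M+ve_k^t$ are invertible: applying $M+ve_k^t$ to $M^{-1}v$ gives $v + v\alpha = (1+\alpha)v$, so $1+\alpha=0$ would force $(M+ve_k^t)M^{-1}v = 0$ with $M^{-1}v \neq 0$, contradicting invertibility of $M+ve_k^t$.

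An alternative route is to invoke the full Sherman--Morrison inversion identity $(M+ve_k^t)^{-1} = M^{-1} - \frac{M^{-1}v\, e_k^t M^{-1}}{1+e_k^t M^{-1}v}$, which one verifies by multiplying against $M+ve_k^t$, and then right-multiply the identity by $v$; the two resulting $M^{-1}v$ terms combine via $1 - \alpha/(1+\alpha) = 1/(1+\alpha)$ to give the stated formula. There is no genuine obstacle in either approach; the appendix simply records the particular contraction against $v$ that is invoked repeatedly in the main text when manipulating expressions such as $\mathcal{R}_z(T_i)t_k^i$ in terms of the resolvent $\mathcal{R}_z(T_i^{(k)})$ of the matrix obtained by zeroing out the $k$th column.
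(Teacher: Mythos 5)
Your proof is correct and complete. Note that the paper itself does not supply a proof of this lemma; it simply records it with a citation to Sherman and Morrison's original 1950 paper, so there is no in-paper argument to compare against. Your direct derivation — solving $(M+ve_k^t)w=v$, reducing to the scalar equation $\beta=(1-\beta)\alpha$ for $\beta=e_k^t w$ and $\alpha=e_k^t M^{-1}v$, and then justifying $1+\alpha\neq 0$ from the invertibility of $M+ve_k^t$ acting on $M^{-1}v$ — is the standard elementary route and is sound. The one small case worth flagging explicitly is $v=0$, where $M^{-1}v=0$ and the nonvanishing-of-$M^{-1}v$ step does not apply; but then $\alpha=0$ and both sides of the identity are zero, so the conclusion holds trivially. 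Your alternative via the full Sherman--Morrison inversion formula is likewise fine, and your closing remark correctly identifies how the lemma is used in the body of the paper (to peel off the $k$th column from $T_i$ inside resolvent expressions).
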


\subsection*{Acknowledgment} Indrajit Jana's research is partially supported by INSPIRE Fellowship\\DST/INSPIRE/04/2019/000015, Dept. of Science and Technology, Govt. of India.\\

Sunita Rani's research is fully supported by the University Grant Commission (UGC), New Delhi.

\end{document}